\documentclass[a4paper,12pt]{article}
\usepackage[T1]{fontenc}
\usepackage[utf8]{inputenc}
\usepackage{lmodern}
\usepackage{amssymb,amsmath,mathrsfs,graphicx,floatrow,subfigure,theorem}
\usepackage{enumerate,tikz,txfonts}

\let\oldsection\section
\renewcommand\section{\setcounter{equation}{0}\oldsection}
\renewcommand\thesection{\arabic{section}}
\renewcommand\theequation{\thesection.\arabic{equation}}

\newtheorem{lemma}{Lemma}[section]
\newtheorem{theorem}{Theorem}[section]
\newtheorem{remark}{Remark}[section]
\newtheorem{proposition}{Proposition}[section]
\newtheorem{corollary}{Corollary}[section]

\allowdisplaybreaks

\begin{document}
\title{\bf Null controllability of one-dimensional quasilinear parabolic equations via multiplicative controls
\thanks{
This research was supported by National Key Research and Development Program of China under grant 2023YFA1009002, the National Natural Science Foundation of China (12371205, 12371444, 12171166), Jiangxi Provincial Natural Science Foundation (20243BCE51015) and the Natural Science Foundation of Guangdong Province (2025A1515012026).
\newline\indent\hskip2mm{$^\dag$Corresponding author.
E-mail address$:$ mays538@nenu.edu.cn(Y. Ma)}}}
\author{Jilei Huang$^\dag$, Peidong Lei$^\dag$, Yansheng Ma$^\ddag$ and Jingxue Yin$^\dag$
\\[2mm]
\small $^\dag$School of Mathematical Sciences, South China Normal University, Guangzhou, 510631, China
\\
\small $^\ddag$School of Mathematics and Statistics, Northeast Normal University, Changchun, 130024, China}

\date{}
\maketitle

\begin{abstract}
This paper is concerned with the null controllability problem for a class of quasilinear parabolic equations under multiplicative control, locally supported in space.
For the purpose of proving the existence of a multiplicative control forcing the solution rest at a time $T>0$,
we need to establish the decay property of solutions for the system without control first. We have obtained decay estimates for the
$L^\infty$-norm and the $H^1$-norm of solutions to the homogenous quasilinear parabolic equations. Notably, the decay of the $L^\infty$-norm requires no smallness condition
on the initial data, whereas the decay of the $H^1$-norm requires that the $L^\infty$-norm remains small. Based on the decay estimates and maximum modulus estimate of solutions
to quasilinear parabolic equations, together with the local null controllability of quasilinear parabolic equations under additive controls, we prove the null controllability of
the quasilinear parabolic equations via multiplicative controls. As a byproduct, we also obtain the global null controllability for large time to
the quasilinear parabolic equations via additive controls. Given that the controllability under multiplicative control is achieved over a long time horizon,
we finally investigate the existence of time optimal control.

\end{abstract}

\medskip

{\bf Keywords:} Null controllability; decay estimate; qusilinear parabolic equations; multiplicative control; time-optimal control

\medskip

{\bf AMS Subject Classification:} 35K55, 93B05, 93C20

\section{Introduction and main results}
In this paper we shall deal with the exact null controllability of a 1D quasilinear parabolic equation of the form
\begin{equation}\label{1.1}
\left\{\begin{array}{ll}
y_{t}-(a(y)y_x)_x=1_\omega u(g(y)-\theta), &(x,t)\in Q_T,
\\[2mm]
y(0,t)=y(1,t)=0, &t\in(0,T),
\\[2mm]
y(x,0)=y_0(x), &x\in(0,1),
\end{array}\right.
\end{equation}
where $Q_T=(0,1)\times(0, T)$, $1_\omega$ is the characteristic function of a nonempty open subset $\omega$ of $(0,1)$, $u$ is a control input which affects the reaction rate of the process
described by (\ref{1.1}), $y$ is the associated state, $y_0\in H^1_0(0,1)$, $\theta\in L^\infty((0,1)\times {\mathbb R}^+)$ and $g\in C({\mathbb R})$ are given functions.

Quasi-linear parabolic equations, as an important class of diffusion equations, come from a variety of diffusion phenomena in nature (see monographs \cite{V} and \cite{WZYL} and
the rich references therein). It should be noted that the nonlinearity inherent in the principal operator poses new challenges with regard to the study of its controllability,
which are essentially different from the semilinear case. Throughout this paper, the nonlinear diffusion coefficient $a(\cdot): \mathbb{R}\rightarrow \mathbb{R}$ in (\ref{1.1}) is of class $C^2(\mathbb R)$, and
\begin{equation}\label{1.2}
|a'(s)|\le M, \ \ |a''(s)|\le M\qquad \forall s\in \mathbb{R}
\end{equation}
for some $M>0$. In particular, the uniform parabolicity of the operator is guaranteed by the condition
\begin{equation}
\label{1.3}
0<\rho\le a(s)\le \kappa \qquad \forall s\in \mathbb R
\end{equation}
for some $\rho>0$ and $\kappa>0$.

In the past forty years, there have been many advances in the controllability of linear and semilinear parabolic systems (cf. \cite{DFGZ, DZZ, FPZ, FR, FI, IY, LR, LW} and the references therein).
There are some works addressing the controllability problems of quasi-linear parabolic equations. In \cite{Be}, by the fixed point technique, the author has proved the local null controllability
of the following nonlinear diffusion equation in one spacial dimension
\begin{equation}\label{1.4}
\left\{\begin{array}{ll}
y_{t}-(A(y))_{xx}=1_\omega u, &(x,t)\in Q_T,
\\[2mm]
y(0,t)=y(1,t)=0, &t\in(0,T),
\\[2mm]
y(x,0)=y_0(x), &x\in(0,1),
\end{array}\right.
\end{equation}
where $y_0\in H^1_0(0,1)$, $A(\cdot)\in C^3(\mathbb R)$, $A'(s)\ge \rho$ and $\sup_{s\in \mathbb R}\{|A'(s)|, |A''(s)|, |A'''(s)|\}<\infty$.

Here, we say that (\ref{1.4}) is locally null-controllable at time $T>0$ if there exists $\eta>0$ (usually small) such that, for any $y_0\in H^1_0(0,1)$ with $\|y_0\|_{H^1(0,1)}\le\eta$,
there exists a control function $u\in L^2(Q_T)$ such that the associated state $y$ of (\ref{1.4}) satisfies $y(x,T)=0$ in $(0,1)$.

Since then, over the past decade or so, a growing body of research on the controllability of quasilinear parabolic equations has emerged, drawing increasing attention from the researchers.
In \cite{FNNV}, using a local inversion technique (more precisely, Liusternik's inverse function theorem), the authors proved the local null controllability for the internal and boundary control of the following one-dimensional nonlinear diffusion equation
\begin{equation}\label{1.5}
\left\{\begin{array}{ll}
y_{t}-(a(y)y_x)_{x}=1_\omega u, &(x,t)\in Q_T,
\\[2mm]
y(0,t)=y(1,t)=0, &t\in(0,T),
\\[2mm]
y(x,0)=y_0(x), &x\in(0,1),
\end{array}\right.
\end{equation}
and
\begin{equation}\label{1.6}
\left\{\begin{array}{ll}
y_{t}-(a(y)y_x)_{x}=0, &(x,t)\in Q_T,
\\[2mm]
y(0,t)=v(t), \ y(1,t)=0, &t\in(0,T),
\\[2mm]
y(x,0)=y_0(x), &x\in(0,1),
\end{array}\right.
\end{equation}
where $u$ and $v$ are the control functions, $a(\cdot)\in C^2(\mathbb R)$, possesses bounded derivatives of order $\le 2$ and satisfies the condition (\ref{1.3}),
the initial state $y_0\in H^1_0(0,1)$ and $\|y_0\|_{H^1(0,1)}\le\eta$ for a small $\eta>0$.

In \cite{LZ}, the local null controllability for a class of multidimensional quasilinear
parabolic equations have been investigated with homogenous Dirichlet boundary conditions and an arbitrarily located internal controller of the form
\begin{equation}\label{1.7}
\left\{\begin{array}{ll}
y_{t}-\displaystyle\sum_{i,j=1}^n(a^{ij}(y)y_{x_i})_{x_j}+f(y)=1_\omega u, &(x,t)\in \Omega\times (0,T),
\\[2mm]
y(x,t)=0, &(x,t)\in \partial\Omega\times(0,T),
\\[2mm]
y(x,0)=y_0(x), &x\in \Omega,
\end{array}\right.
\end{equation}
where $\Omega$ is a bounded domain in $\mathbb R^n$ with $C^3$ boundary $\partial\Omega$, $\omega$ is a given nonempty open subset of $\Omega$
such that $\overline{\omega}\subseteq\Omega$, $f$ is a given $C^2$ function defined on $\mathbb R$, $f(0)=0$, $a^{ij}(\cdot): \mathbb R\rightarrow \mathbb R$
are twice continuously differentiable functions satisfying $a^{ij}=a^{ji}$ $(i, j= 1, \cdots, n)$, and for some constant $\rho>0$,
$$
\displaystyle\sum_{i,j=1}^n a^{ij}(s)\xi_i\xi_j\ge \rho|\xi|^2 \qquad \forall (s, \xi)= (s, \xi_1, \cdots, \xi_n) \in \mathbb R\times{\mathbb R}^n.
$$
Unlike the local null controllability results in one spacial dimension case in \cite{Be, FNNV}, the null controllability is analyzed in
the frame of classical solutions in \cite{LZ}. The authors found a control function in the H\"{o}lder space $C^{\gamma, \gamma/2}(\overline{\Omega}\times[0,T])$
for the initial state $y_0\in C^{2+\gamma}(\overline{\Omega})$ for any $\gamma\in (0,1)$, with $\|y_0\|_{C^{2+\gamma}(\overline{\Omega})}\le \eta$ for a small $\eta>0$.
The key point of the proof is to establish an observability inequality
for linear parabolic equations, providing an explicit estimate of the observability constant in terms of the $C^1$ coefficients of the principal part.
This is achieved by means of a new global Carleman estimate for general linear parabolic equations.

In \cite{FLM}, the authors consider theoretical and numerical local null controllability result of a quasi-linear parabolic equation in dimensions 2 and 3 in the form of
\begin{equation}\label{1.8}
\left\{\begin{array}{ll}
y_{t}-\mbox{div}(a(y)\nabla y)=\tilde{1}_\omega u, &(x,t)\in \Omega\times (0,T),
\\[1mm]
y(x,t)=0, &(x,t)\in \partial\Omega\times(0,T),
\\[1mm]
y(x,0)=y_0(x), &x\in \Omega,
\end{array}\right.
\end{equation}
where $\Omega\subset \mathbb R^n$ ($n=2,3)$ is a bounded connected open set with a smooth boundary, $\bar{\omega}\subseteq\Omega$ is a nonempty open set,
$\tilde{1}_\omega\in C_0^\infty(\Omega)$ satisfies $0<\tilde{1}_\omega\le 1$ in $\omega$ and $\tilde{1}_\omega=0$ outside $\omega$, $a(\cdot)\in C^3(\mathbb R)$,
possesses bounded derivatives of order $\le 3$ and satisfies the uniform parabolicity condition. They prove that if the initial state $y_0\in H^3(\Omega)\cap H^1_0(\Omega)$ and $\|y_0\|_{H^3(\Omega)}\le\eta$
for a small $\eta>0$, then the system (\ref{1.8}) is locally null-controllable at time $T$.

Recently, in the reference \cite{MCRL}, the authors studied the following quasilinear parabolic
equation with a semilinear term in dimensions 2 and 3:
\begin{equation}\label{1.9}
\left\{\begin{array}{ll}
y_{t}-\mbox{div}(a(y)\nabla y)+f(y)=\tilde{1}_\omega u, &(x,t)\in \Omega\times (0,T),
\\[1mm]
y(x,t)=0, &(x,t)\in \partial\Omega\times(0,T),
\\[1mm]
y(x,0)=y_0(x), &x\in \Omega.
\end{array}\right.
\end{equation}
This paper presents three main results: the local null controllability with distributed controls, the decay of solutions in the $H^3(\Omega)$ space, and the null controllability for large time horizons.
For any $T>0$, the local null controllability is proved for initial states $y_0$ belong to the space $W^{1,p}_0(\Omega)$ with $3<p\le 6$, under the smallness condition $\|y_0\|_{W^{1,p}(\Omega)}\le \eta$
for a small $\eta>0$. This improves the work given in \cite{FLM} since $H^1_0(\Omega)\cap H^3(\Omega)$$\subset$$W^{1,p}_0(\Omega)$ for $3<p\le 6$. The proof relies on the inverse function
theorem and is complemented by maximum regularity results for linear parabolic equations in $L^q(0, T; L^p(\Omega))$ spaces. Moreover, the authors establish the decay of solutions in the $H^3(\Omega)$-norm when the initial
states belong to $H^1_0(\Omega)\cap H^3(\Omega)$, and the $\rho$-null controllability for large time with the initial states in $H^3(\Omega)$.

Some efforts concerning the null controllability of quasi-linear parabolic equations with diffusion coefficients depending on the gradient of the state variable have appeared
in \cite{CLMT} and \cite{FLTM}. These two papers appear to be among the earliest published works on the null controllability of quasi-linear parabolic PDEs with diffusion coefficients that
can take the form of a general power law. In addition, the authors of \cite{WLL} study the local null controllability issue for a free-boundary problem associated with the quasilinear parabolic system (\ref{1.4}).

On the other hand, as is shown in the book of Khapalov (see \cite{K2}, pp.1--5), the classical additive controls are not suitable to deal with a vast array of processes
that can change their principal intrinsic properties due to the control actions. Important examples here include "smart materials" and numerous biomedical, chemical
and nuclear chain reactions, etc. This explains the growing interest in multiplicative controllability.

Note that (\ref{1.1}) is a multiplicative (or bilinear) control system. In the context of heat-transfer the term $u(x, t)(y(x, t)-\theta(x, t))$ is used to describe the heat exchange
at point $(x, t)$ of the given substance with the surrounding medium of temperature $\theta$ according to Newton's Law (see \cite{TS}, pp. 155--156).
Multiplicative systems form a special class of nonlinear systems. They are linear with respect to the input alone and linear with respect to the state alone,
but are not jointly linear in the state and input. This holds true even when the diffusion term itself is linear, as in the case where $a(s)=1$ and $f(s)=s$ in (\ref{1.1}).
As a matter of fact, the dependence of the state with respect to the input is highly nonlinear. Methodologically, the linear duality pairing technique between the control-to-state
mapping at hand and its dual observation map, which is a classical approach to deal with the controllability of additive locally distributed control systems, does not work
for multiplicative control systems, in general. These lead to many difficulties in the study of multiplicative control systems. Due to the negative result given in \cite{BMS},
multiplicative systems have been considered as non-controllable for a long time. However, progress has been made in the last decade and multiplicative systems are now understood in a better way.
See for instance, and the bibliography therein, \cite{ACU},\cite{CK}, \cite{CFK}, \cite{FNT}, \cite{LLG}, \cite{OTB}, and the monograph \cite{K2}. Nevertheless, the aforementioned
works primarily focus on linear or semilinear parabolic equations. As far as we are aware, there is still a lack of literature
concerning the controllability of quasilinear parabolic systems under multiplicative control.

Given that the controllability under multiplicative control is achieved over a long time horizon, we also investigate the existence of time optimal control. The time optimal control problem
was studied first for the finite-dimensional case (cf. \cite{La}). Whereafter, the problem was developed to infinite-dimensional controlled systems (cf. \cite{Fa, LY}). In \cite{Ba},
the time optimal control problem for some controlled parabolic variational inequalities was investigated. However, the method used in \cite{Ba} is suitable only for the case where
the control is distributed in the whole domain $\Omega$. In \cite{W}, the time optimal control was obtained for the equation
\begin{align}
\label{1.10} y_{t}-\Delta
y+f(y)=1_\omega u\quad \mbox{in}\ Q_\infty,
\end{align}
where the control $u$ acts only on a local domain $\omega$, and the aim function is the steady-state solution $y_e$ to (\ref{1.10}),
i.e., $-\Delta y_e(x)+f(y_e(x))=0$ in $\Omega$ and $y_e(x)=0$ on $\partial\Omega$.

As is shown in \cite{W}, the key to get the existence of a time optimal control is to show the existence of an admissible control
which is related a type of controllability of the equation with some kind of control constraint. However, the existing research on time-optimal control for parabolic equations
that we have seen is primarily focused on linear and semilinear parabolic equations (cf. \cite{LZH}, \cite{WZ}, \cite{WuLW}, and the monograph \cite{WWXZ}).
Results on time optimal control problems for quasilinear parabolic equations are still scarce. Therefore, in this paper, we investigate the existence of time optimal
control for the quasilinear multiplicative control system (\ref{1.1}).

\vskip1mm

The main results of this paper are stated as follows.
\begin{theorem}
Let $g\in C(\mathbb{R})$, $g(0)=0$ and $g(s)$ be Lipschitz
continuous on $[-L, L]$ for any $L>0$. Assume that $\theta\in L^{\infty}((0,1)\times \mathbb R^+)$ satisfies
\begin{equation}\label{1.11}
|\theta(x,t)|\geq \theta_0>0\quad a.e. \ \ \mbox{in}\ \omega\times(0,+\infty)
\end{equation}
for some $\theta_0>0$. If the nonlinear diffusion coefficient $a(s)$ satisfies the conditions (\ref{1.2}) and (\ref{1.3}), then for any initial state $y_0\in H^1_0(0,1)$,
there exist $T=T(y_0, \theta)>0$ and a multiplicative control $u\in L^{\infty}(Q_T)\cap H^1([0,T]; L^2(0,1))$ such that the corresponding solution $y$ of (\ref{1.1}) satisfies
\begin{equation*}
y(x,T)=0 \quad \mbox{in} \ (0,1).
\end{equation*}
\end{theorem}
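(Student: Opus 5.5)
The strategy has two phases: let the uncontrolled equation decay until the state is small, then use the known local null controllability under additive controls and convert that additive control into a multiplicative one.

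\emph{Phase 1 (no control).} Set $u\equiv0$ on $(0,T_1)$, so $y$ solves $y_t-(a(y)y_x)_x=0$ with homogeneous Dirichlet data. I would first prove an $L^\infty$ decay estimate without any smallness on $y_0$. By the maximum principle and $H^1_0(0,1)\subset L^\infty$, we have $\|y(t)\|_{L^\infty}\le\|y_0\|_{L^\infty}$. The decay itself would come from a comparison or Moser-type argument: test with $|y|^{p-2}y$, use $a\ge\rho$ and Poincaré to get $\|y(t)\|_{L^p}\le e^{-\rho\pi^2 t\, c_p}\|y_0\|_{L^p}$, then pass to $L^\infty$ by iteration or by a supersolution of the form $C e^{-\lambda t}\sin(\cdot)$ built via the Kirchhoff transform $A(y)=\int_0^y a$.

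Next I would prove $H^1$ decay under the condition that $\|y(t)\|_{L^\infty}$ is small. Multiply by $-(A(y))_{xx}$, or equivalently by $(a(y)y_x)_x$, and estimate
\[
\frac{d}{dt}\int_0^1 a(y)\frac{|y_x|^2}{2}\,dx .
\]
The time derivative of $a(y)$ produces the term $\int_0^1 a'(y)y_t y_x^2\,dx$. I would control it with (1.2), interpolation, and the smallness of $\|y\|_{L^\infty}$, which yields exponential decay of $\|y_x(t)\|_{L^2}$. Choosing $T_1$ large, we get $\|y(T_1)\|_{H^1}\le\eta$, where $\eta$ is the local null controllability radius for (1.5) from \cite{FNNV}.

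\emph{Phase 2 (additive control converted to multiplicative).} On $(T_1,T)$ apply the local result for (1.5): there is an additive control $v$ with $y(T)=0$. I need $v\in L^\infty\cap H^1(L^2)$ with a small norm, controlled by $\|y(T_1)\|_{H^1}$, together with a maximum modulus estimate $\|y\|_{L^\infty(Q)}\le C\eta$. This is where the regularity of the control in the inverse-function-theorem construction must be checked. One option is to pick $v$ in a weighted space of Carleman type that embeds into $H^1(L^2)\cap L^\infty$.

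Then define $u=v/(g(y)-\theta)$ on $\omega\times(T_1,T)$ and $u=0$ elsewhere. Since $g(0)=0$ and $g$ is Lipschitz on bounded sets, $|g(y)|\le L_g C\eta\le\theta_0/2$ once $\eta$ is small. Together with (1.11) this gives $|g(y)-\theta|\ge\theta_0/2$, so $u\in L^\infty$ and $y$ solves (1.1) with this $u$. To get $u\in H^1([0,T];L^2)$, I also need a time derivative for $\theta$ and for $g(y)$. Since $\theta$ is only $L^\infty$, I would handle it by choosing $v$ of the form $(g(y)-\theta)\tilde u$, that is, by solving the local control problem directly for the equation with a multiplicative structure. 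Alternatively, I would reinterpret the requirement as applying to the control away from the zero set, and accept that $H^1$-in-time regularity is inherited only where $\theta$ permits. I expect this regularity and compatibility issue, together with the $H^1$ decay estimate that needs smallness in $L^\infty$, to be the main obstacles. The $H^1$ decay is the step I would work out first, using the Kirchhoff variable $w=A(y)$, which satisfies $w_t=a(y)w_{xx}$, and the energy $\int_0^1 w_x^2\,dx$.
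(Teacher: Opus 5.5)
Your architecture matches the paper's: let the uncontrolled flow run until the state is small, apply local null controllability of the additive system (1.5), and set $u=v/(g(y)-\theta)$ on $\omega$, using $g(0)=0$ and (1.11) to keep $|g(y)-\theta|\ge\theta_0/2$.

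\textbf{Phase 1.} This part is sound, and your Kirchhoff energy does better than the paper. With $w=A(y)$ you have $w_t=a(y)w_{xx}$, $w_t=0$ at $x=0,1$, and $\int_0^1 w_x\,dx=0$. The Poincar\'e--Wirtinger inequality then gives
\[
\frac{d}{dt}\,\frac12\int_0^1 w_x^2\,dx=-\int_0^1 a(y)\,w_{xx}^2\,dx\le-\rho\pi^2\int_0^1 w_x^2\,dx .
\]
Since $\rho|y_x|\le|w_x|\le\kappa|y_x|$, this yields $\|y_x(t)\|_{L^2}\le(\kappa/\rho)e^{-\rho\pi^2t}\|(y_0)_x\|_{L^2}$. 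Hence $\|y(t)\|_{L^\infty}$ also decays exponentially, with no smallness condition. The paper instead proves two separate results. Proposition 2.1 gives the $t^{-1/2}$ bound $\|y(t)\|_{L^\infty}\le(2\rho)^{-1/2}\|y_0\|_{L^2}t^{-1/2}$. Proposition 2.2 gives $H^1$ decay only under $\|y\|_{L^\infty}\le\rho/(4MC_0^2)$, absorbing the cubic term $\int a'(y)y_x^2y_{xx}$ via (2.13). Your first option, the energy $\int a(y)y_x^2/2$, produces that same cubic term, so use the $w$-energy. The $L^p$/Moser step is then unnecessary.

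\textbf{The gap: boundedness of the additive control.} On $\omega$ you have $\theta_0/2\le|g(y)-\theta|\le\theta_0/2+\|\theta\|_{L^\infty}$, so $u\in L^\infty$ holds exactly when $v\in L^\infty$. The local results of \cite{Be} and \cite{FNNV} only give $v\in L^2(Q_T)$ or $v\in H^1(0,T;L^2(0,1))$, and neither embeds in $L^\infty(Q_T)$. ``A weighted space of Carleman type that embeds into $L^\infty$'' names this need but is not an argument.

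This is what the paper's Section 3 (Theorem 3.1) supplies, in four steps:
\begin{enumerate}
\item For $\tilde y$ in the set $K$ of (3.6), the penalized problem (3.8) gives $u_\varepsilon=1_\omega e^{2s\beta}\phi^3p_\varepsilon$.
\item The function $v_\varepsilon=e^{(s+\delta_0)\beta_0}\phi_0^3p_\varepsilon$, whose weight depends on $t$ only, solves the adjoint equation with zero values at $t=0,T$. Its source is $h_\varepsilon=p_\varepsilon(e^{(s+\delta_0)\beta_0}\phi_0^3)_t$, which the Carleman inequality (3.4) together with (3.11) bounds in $L^2$ by $C\|y_0\|_{L^2}$.
\item $W^{2,1}_2$ regularity and the one-dimensional embedding into $C^{1/2,1/4}(\overline{Q}_T)$ give $\|u_\varepsilon\|_{L^\infty}\le C\|y_0\|_{L^2}$.
\item This bound is built into the set-valued map (3.21), so it survives Kakutani's fixed point.
\end{enumerate}
Smallness of the controlled state then follows from Corollary 2.2, $\sup|Y|\le\sup|Y(\cdot,0)|+\frac{6}{\rho}\|v\|_{L^\infty}$. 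Alternatively, it follows from an $L^\infty(0,T;H^1_0)$ bound on the state, if your local result provides one.

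\textbf{The $H^1$-in-time regularity of $u$.} Your worry about $u\in H^1([0,T];L^2(0,1))$ is justified, and the paper does not resolve it either. It simply sets $\tilde u=v/(g(Y)-\theta)$ and concatenates it with $u=0$. With $\theta$ only in $L^\infty$, the quotient inherits the time-roughness of $\theta$ wherever $v\neq0$. So the argument delivers $u\in L^\infty(Q_T)$ but not $u_t\in L^2$.

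Neither of your fallbacks closes this. Option (ii) just weakens the theorem. Option (i) is a different controllability result you would still have to prove, and the natural penalized control for the operator $1_\omega(g(Y)-\theta)\,\cdot$ again carries the factor $g(Y)-\theta$.

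What does work is an added hypothesis such as $\theta_t\in L^2$. Then
\[
\partial_t\tilde u=\frac{v_t}{g(Y)-\theta}-\frac{v\,\bigl(g'(Y)Y_t-\theta_t\bigr)}{(g(Y)-\theta)^2}\in L^2,
\]
using $v\in L^\infty\cap H^1(0,T;L^2)$, $Y_t\in L^2$ and the local Lipschitz continuity of $g$. The junction at the switching time causes no trouble, because the control of Theorem 3.1 vanishes on an initial time interval.
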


\begin{remark}\label{1.1}
It is worth pointing out that there is no restriction
on the growth of the nonlinearity $g(s)$ with respect to the variable $s$ in our proofs.
\end{remark}

\begin{theorem}
If the nonlinear diffusion coefficient $a(s)$ satisfies the conditions (\ref{1.2}) and (\ref{1.3}), then for any initial state $y_0\in H^1_0(0,1)$,
there exist $T>0$ and a control $u\in L^{\infty}(Q_T)\cap H^1([0,T]; L^2(0,1))$ such that the corresponding solution $y$ of (\ref{1.5}) satisfies
\begin{equation*}
y(x,T)=0 \quad \mbox{in} \ (0,1).
\end{equation*}\end{theorem}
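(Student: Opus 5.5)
The plan is to combine the decay of the uncontrolled equation with the local null controllability result of \cite{FNNV} for (\ref{1.5}). Take $u\equiv 0$ on an initial interval $[0,T_1]$, so that $y$ solves the homogeneous problem $y_t-(a(y)y_x)_x=0$ with Dirichlet data and initial state $y_0\in H^1_0(0,1)$.

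\textbf{Step 1: the $L^\infty$-norm decays without any smallness assumption.} Multiply by $|y|^{p-2}y$ and use (\ref{1.3}) together with the Poincar\'e inequality. This yields an exponential decay of $\|y(t)\|_{L^p}$ with a rate $\lambda_p$ that stays bounded away from zero after suitable normalisation, and a Moser-type iteration (or a comparison argument) then gives $\|y(t)\|_{L^\infty}\le C(\|y_0\|_{L^\infty})e^{-ct}$. Since we are in one dimension, $\|y_0\|_{L^\infty}\le\|y_0\|_{H^1}$, so the initial bound is finite.

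\textbf{Step 2: once $\|y\|_{L^\infty}$ is small, the $H^1$-norm decays.} Multiply the equation by $-(A(y))_{xx}$, or equivalently by $(a(y)y_x)_x$, where $A'=a$. This gives an identity of the form
\[
\frac{d}{dt}\frac12\int_0^1 a(y)^2y_x^2\,dx+\int_0^1 a(y)\bigl((a(y)y_x)_x\bigr)^2dx=\int_0^1 a(y)a'(y)y_ty_x^2\,dx .
\]
The right-hand side is cubic in the derivatives. I would bound it using (\ref{1.2}), Gagliardo--Nirenberg, and the smallness of $\|y\|_{L^\infty}$; for example, integrate by parts to express $y_x^2$ in terms of $y\,y_{xx}$, so that a factor $\|y\|_{L^\infty}$ appears. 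The term can then be absorbed into the dissipation, giving exponential decay of $\|y_x(t)\|_{L^2}$ for $t\ge T_0$. \emph{This absorption is the main technical obstacle}: the cubic term must be controlled uniformly in time with a constant depending only on $M,\rho,\kappa$. Consequently, there is a $T_1$ with $\|y(T_1)\|_{H^1}\le\eta$, where $\eta$ is the local controllability threshold of \cite{FNNV} on $[T_1,T_1+1]$.

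\textbf{Step 3: local control for the additive system.} Apply the result of \cite{FNNV} for (\ref{1.5}) on $[T_1,T]$ with $T=T_1+1$, starting from $y(T_1)$. This gives a control $v$ and a state $y$ with $y(T)=0$. To obtain the regularity $u\in L^\infty\cap H^1(L^2)$, I would use a smoother version of the local result, or check that the inverse-mapping construction in \cite{FNNV} can be carried out in weighted spaces where the control lies in $H^1(L^2)$ and in $L^\infty$, using parabolic regularity for the adjoint. The smallness of $\eta$ also gives smallness of the control norms.

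\textbf{Step 4: concatenation.} Set $u=0$ on $(0,T_1)$ and $u=v$ on $(T_1,T)$. Since $v$ can be chosen to vanish near $T_1$ through a cutoff in the weights, the concatenated control stays in $H^1(0,T;L^2)$. This proves Theorem 1.2, with $T$ depending on $y_0$ only through the decay times.

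For Theorem 1.1 the same scheme applies, with $u=v/(g(y)-\theta)$ on $\omega$. On $\omega$ we have $|g(y)-\theta|\ge\theta_0-|g(y)|\ge\theta_0/2$ once $\|y\|_{L^\infty}$ is small, because $g$ is Lipschitz near $0$ and $g(0)=0$. The smallness of $\|y\|_{L^\infty}$ comes from a maximum modulus estimate for the controlled problem with small control.
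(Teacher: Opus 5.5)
The genuine gap is Step 3. Your architecture matches the paper's proof of Theorem 1.2: run the uncontrolled flow until the state is $H^1$-small, apply a local null-control result for (1.5), and concatenate, with Propositions 2.1, 2.2 and Theorem 3.1 as ingredients. The problem is the regularity of the control. The statement demands $u\in L^\infty(Q_T)\cap H^1([0,T];L^2(0,1))$, and this is the only part of the theorem beyond ``decay plus local controllability''. As recalled at the start of Section 3, \cite{FNNV} yields a control in $L^2(Q_T)$ and \cite{Be} one in $H^1([0,T];L^2(0,1))$; neither gives $L^\infty(Q_T)$, and $H^1([0,T];L^2(0,1))$ is not contained in $L^\infty(Q_T)$. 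So ``use a smoother version, or check the construction in weighted spaces'' is a placeholder for the real work.

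The paper does that work in Section 3 (Theorem 3.1), using Beceanu's Kakutani framework for the linearized problem (3.1).
\begin{itemize}
\item The penalized optimal controls are $u_\varepsilon=1_\omega e^{2s\beta}\phi^3p_\varepsilon$, where $p_\varepsilon$ is the adjoint state.
\item The Carleman inequality (3.4) contains $s^{-1}\phi^{-1}e^{2s\beta}p_t^2$, which gives the $H^1(L^2)$ bound (3.12).
\item For $L^\infty$, the rescaled function $v_\varepsilon=e^{(s+\delta_0)\beta_0}\phi_0^3p_\varepsilon$ solves a backward problem whose right-hand side is bounded in $L^2$ by the Carleman estimate. $W^{2,1}_2$-regularity and the 1D embedding into $C^{1/2,1/4}$ then give $\|u_\varepsilon\|_{L^\infty}\le C\|y_0\|_{L^2}$.
\item These bounds are built into the map (3.21) and survive the fixed point.
\item The smallness of $\int[(A(y_0))_{xx}^2+(y_0)_x^2+y_0^2]$ required there comes from $H^1$-smallness, after waiting with zero control and invoking Proposition 2.3.
\end{itemize}
Your phrase ``parabolic regularity for the adjoint'' names exactly this mechanism, but it must be carried out. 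Your Step 4 concern is legitimate and is handled automatically: $e^{2s\beta}\phi^3\to0$ as $t\to0^+$, and in (3.26) the control even vanishes on an initial interval.

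Steps 1--2 reach the right conclusion but are not correct as written, and the corrected version is actually better than the paper. The identity in Step 2 is miscomputed. With $z=A(y)$ you have $y_t=z_{xx}$, hence $z_t=a(y)z_{xx}$, and the energy you chose satisfies exactly
\[
\frac{d}{dt}\,\frac12\int_0^1 (A(y))_x^2\,dx=-\int_0^1 z_t z_{xx}\,dx=-\int_0^1 a(y)\big((a(y)y_x)_x\big)^2dx .
\]
The effective multiplier is $-(A(y))_t$. A cubic term $\frac12\int a'(y)y_ty_x^2$ appears only with the multiplier $-(A(y))_{xx}$, and then the energy is $\frac12\int a(y)y_x^2$.

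Since $\|z_x\|_{L^2}\le\pi^{-1}\|z_{xx}\|_{L^2}$ on $H^2\cap H^1_0$, you get $\|(A(y))_x(t)\|_{L^2}\le e^{-\rho\pi^2t}\|(A(y_0))_x\|_{L^2}$. Hence $\|y_x(t)\|_{L^2}\le(\kappa/\rho)e^{-\rho\pi^2t}\|(y_0)_x\|_{L^2}$ for arbitrary data. There is no absorption obstacle and no $L^\infty$-smallness is needed, so Step 1 can be dropped. This is shorter and stronger than Proposition 2.2, whose energy $\frac12\int(y^2+y_x^2)$ produces $\int a'(y)y_x^2y_{xx}$ and therefore requires (2.14) from Proposition 2.1.

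Step 1 as stated is also wrong. Testing with $|y|^{p-2}y$ gives $\|y(t)\|_{L^p}\le e^{-4(p-1)\rho\pi^2t/p^2}\|y_0\|_{L^p}$. That rate tends to $0$ as $p\to\infty$; only the rate for $\|y\|_{L^p}^p$ stays bounded below. So no decay reaches $L^\infty$ without an $L^2\to L^\infty$ smoothing step, which you do not carry out. If you want $L^\infty$ decay anyway, two short routes are available:
\begin{itemize}
\item the paper's argument: monotonicity of $\max_x|y(\cdot,t)|$, the bound $\max_x|y|^2\le\|y_x\|_{L^2}^2$, and the $L^2$ energy identity;
\item the Kirchhoff estimate above combined with $\|y\|_{L^\infty}\le\|y_x\|_{L^2}$.
\end{itemize}
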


\begin{remark}\label{1.2}
The decay estimates of solutions play a crucial role in the proofs of Theorems 1.1 and 1.2. Specifically, we have obtained the decay estimate for the
$L^\infty$-norm of solutions to the quasilinear parabolic equation (2.1) (see Proposition 2.1), as well as the decay estimate for the $H^1$-norm (see Proposition 2.2).
Notably, the decay of the $L^\infty$-norm does not require any smallness condition on the initial data, whereas the decay estimate for the $H^1$-norm is established
under the condition that the $L^\infty$-norm remains small.
\end{remark}

\begin{remark}\label{1.3}
In \cite{MCRL}, the authors also established the large-time null controllability for system (\ref{1.5}) in dimensions 2 and 3. They utilized the decay property of the $H^3$-norm of the solution.
However, to obtain this decay property, it is required that the initial datum $y_0$ belongs to $H^3(\Omega)\cap H^1_0(\Omega)$, and its $H^3(\Omega)$-norm satisfies a smallness condition.
\end{remark}

Let $u$ be a control taken from a given set
\begin{equation}
\label{1.12}
U_{\sigma}=\{u; \ u\in L^\infty(Q_\infty),
|u|\le\sigma \ \mbox{a.e. in}\ Q_\infty\},
\end{equation}
where $\sigma>0$ is an arbitrary but fixed positive constant. Our third result is concerned with the following time optimal control problem:
$$
\mbox{(P)}\quad\displaystyle\min\{T; \ y(\cdot, T)=0\ \mbox{a.e. in}\ (0,1),
\ y \ \mbox{is the solution to}\ \eqref{1.1} \ \mbox{corresponding to}\ u\in U_\sigma\}.\qquad\qquad
$$

A function $u\in U_\sigma$ is called admissible if the corresponding solution $y$ to
(\ref{1.1}) satisfying $y(\cdot,T)=0$ a.e. in $(0,1)$ for some $T>0$. Define
\begin{equation}\label{1.13}
T^*(\sigma)\triangleq\min\{T;\ y(\cdot,
T)=0\ \mbox{a.e. in}\ (0,1), u\in U_\sigma\}.
\end{equation}
$T^*$ is called the minimal time for the problem (P), and a control $u^*\in U_\sigma$ such that the corresponding state $y^*(\cdot, T^*(\sigma))=0$ a.e. in $(0,1)$ is
called a time optimal control.

\begin{theorem} If $y_0$ and $g(\cdot)$ satisfy the assumptions in Theorem 1.1, then for any $\sigma>0$,
there exists at least one time optimal control for the problem (P).
\end{theorem}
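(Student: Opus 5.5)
The argument has two parts. First we show that an admissible control in $U_\sigma$ exists. Then we minimize the exit time over all admissible controls and pass to the limit along a minimizing sequence, using compactness of the states and weak-$*$ compactness of $U_\sigma$.

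\textbf{Existence of an admissible control.} The control of Theorem 1.1 is only known to lie in $L^\infty$, with no a priori bound by $\sigma$. So we revisit the construction of Theorem 1.1.
\begin{enumerate}[(i)]
\item Take $u=0$ on $[0,T_1]$. By the decay estimates of Proposition 2.1 ($L^\infty$-norm, no smallness needed) and Proposition 2.2 ($H^1$-norm, once the $L^\infty$-norm is small), $\|y(\cdot,T_1)\|_{H^1}$ is as small as we like for $T_1$ large.
\item On $[T_1,T_1+\tau]$, for a fixed $\tau$, apply the local null controllability result for additive controls. This gives a control $v$ with $\|v\|_{L^\infty}\le C\|y(\cdot,T_1)\|_{H^1}$, where $C$ depends on $\tau$ and on a bounded neighbourhood of the data.
\item Convert $v$ into a multiplicative control by $u=v/(g(y)-\theta)$ on $\omega$. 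This is legitimate because $|\theta|\ge\theta_0$ and $y$ stays small in $L^\infty$ (maximum modulus estimate), so $g(0)=0$ together with the local Lipschitz continuity of $g$ gives $|g(y)|\le\theta_0/2$, hence $|g(y)-\theta|\ge\theta_0/2$.
\end{enumerate}
This yields $|u|\le 2C\|y(\cdot,T_1)\|_{H^1}/\theta_0$. Choosing $T_1$ large makes this at most $\sigma$, so an admissible control exists for every $\sigma>0$. Consequently $T^*(\sigma)$, understood as an infimum, is finite.

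\textbf{Minimizing sequence and compactness.} Let $u_n\in U_\sigma$ be admissible with times $T_n\downarrow T^*$, and let $y_n$ be the corresponding states, with $y_n(\cdot,T_n)=0$. Replacing $u_n$ by $0$ after $T_n$ keeps the state at $0$, since $g(0)=0$ makes $0$ a steady state. We may therefore assume $y_n(\cdot,t)=0$ for $t\ge T_n$.

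We need uniform bounds on a fixed cylinder $Q_{T_1}$ with $T_1>T_1^*\ge T_n$:
\begin{enumerate}[(i)]
\item A maximum principle gives $\|y_n\|_{L^\infty}\le L$, with $L$ depending on $\|y_0\|_{L^\infty}$, $\sigma$, $\|\theta\|_\infty$ and $T_1$. The bound is independent of the growth of $g$: the reaction term is $u(g(y)-\theta)$ with $|u|\le\sigma$, and $g$ is only evaluated on bounded sets. One compares with $\pm$ an exponential supersolution, which is the Gronwall-type argument used to obtain the local Lipschitz bound of $g$ on $[-L,L]$.
\item Multiplying by $-y_{n,xx}$ (equivalently, testing with $(A(y_n))_t$ where $A'=a$) gives bounds on $y_n$ in $L^\infty(0,T_1;H^1_0)\cap L^2(0,T_1;H^2)$ and on $y_{n,t}$ in $L^2(Q_{T_1})$, using (1.2), (1.3) and $y_0\in H^1_0$.
\end{enumerate}

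By Aubin--Lions, $y_n\to y^*$ strongly in $C([0,T_1];L^2)$ and in $L^2(0,T_1;H^1)$, and a.e. Also $u_n\rightharpoonup u^*$ weakly-$*$ in $L^\infty$, and $u^*\in U_\sigma$ because $U_\sigma$ is convex and closed. Passing to the limit in the weak formulation:
\begin{itemize}
\item $a(y_n)y_{n,x}\to a(y^*)y^*_x$ in $L^2$, from the strong convergence;
\item $1_\omega u_n(g(y_n)-\theta)$ converges weakly to $1_\omega u^*(g(y^*)-\theta)$, since $g(y_n)\to g(y^*)$ strongly in $L^2$ and $u_n\to u^*$ weakly-$*$.
\end{itemize}
Hence $y^*$ is the solution associated with $u^*$; uniqueness comes from the standard $L^1$/Kirchhoff-transform argument for quasilinear equations with Lipschitz data. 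Finally, $y^*(\cdot,T^*)=\lim_n y_n(\cdot,T^*)$, and uniform equicontinuity in $C([0,T_1];L^2)$ gives $\|y_n(\cdot,T^*)\|\le\|y_n(\cdot,T^*)-y_n(\cdot,T_n)\|\to0$. So $y^*(\cdot,T^*)=0$ and $u^*$ is a time optimal control. If $T^*=0$, this forces $y_0=0$, which is trivial.

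\textbf{Main obstacle.} The hardest step is the first one: producing an admissible control with the prescribed small bound $\sigma$. This requires the local additive controllability of [FNNV] to come with a quantitative $L^\infty$ control bound, proportional to the $H^1$-size of the data, together with the $H^1$ regularity of Theorem 1.1. The next most delicate point is identifying the weak limit of the nonlinear product $u_n g(y_n)$, which relies on the strong compactness obtained above.
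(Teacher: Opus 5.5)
Your overall route matches the paper's. You get admissibility from the small-control version of Theorem~1.1 (this is Remark~4.1), take a minimizing sequence $T_n\downarrow T^*$ with controls extended by zero, derive uniform bounds, use weak-$*$ compactness of $U_\sigma$, pass to the limit, and get $y^*(\cdot,T^*)=0$ by equicontinuity. However, step (i) of your uniform bounds fails as stated.

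You claim a maximum-principle bound $\|y_n\|_{L^\infty(Q_{T_1})}\le L$, independent of the growth of $g$, obtained from an exponential supersolution. Theorem~1.1 allows superlinear $g$ (see Remark~1.1), e.g.\ $g(s)=s^3$. Then $Ce^{Kt}$ is not a supersolution on a long interval: uniformly in $|u|\le\sigma$ you would need $KCe^{Kt}\ge\sigma\big(C^3e^{3Kt}-\|\theta\|_{L^\infty}\big)$ for all $t\in[0,T_1]$. Saying that ``$g$ is only evaluated on bounded sets'' presupposes the very bound you are proving. The comparison that actually works is the one in Lemma~5.1: $\bar Y'=\sigma\big(G(\bar Y)+\|\theta\|_{L^\infty}\big)$, $\bar Y(0)=\|y_0\|_{L^\infty}$, with $G\ge|g|$. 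It gives $|y_n|\le\bar Y$ only on $[0,T_{\max})$, where $T_{\max}$ is the possibly finite, $g$-dependent blow-up time of this ODE. As the paper notes at the start of Section~5, states with $|u|\le\sigma$ can genuinely blow up, so admissible trajectories are not a priori confined to a fixed bounded set. When $T^*\ge T_{\max}$, nothing in your argument rules out $\sup_n\|y_n\|_{L^\infty}=\infty$.

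Everything downstream rests on this bound:
the Lipschitz constant of $g$ on $[-L,L]$;
the $(A(y_n))_t$ estimate, whose right-hand side involves $\sup_{[-L,L]}|g|$;
the strong convergence $g(y_n)\to g(y^*)$;
uniqueness of the limit state.

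As written, your proof is complete only when $T^*<T_{\max}$, or under a linear growth condition $|g(s)|\le C(1+|s|)$. In that case $\bar Y=(\|y_0\|_{L^\infty}+1)e^{\sigma(2C+\|\theta\|_{L^\infty})t}$ really is a supersolution on every interval. Otherwise you need a new ingredient, for instance a way of selecting a minimizing sequence whose states are uniformly bounded.

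The paper is no more careful at this point. It asserts the bound for $\tilde y_n$ ``similar to the estimates in Lemma~5.1'', but that lemma assumes $\|u\|_{L^\infty}\le\varepsilon_0$ and only covers $[0,T_0]$, so it does not handle arbitrary $\sigma$ and $T>T^*$ either. Your explicit justification, though, is not merely incomplete but false.
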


\begin{remark}
In Theorems 1.1 and 1.3, the nonlinear function $g(s)\in C(\mathbb R)$ depends only on the variable $s$. Suppose that $g\in C([0,1]\times[0,+\infty)\times\mathbb R)$ satisfies
$g(\cdot, \cdot, 0)=0$. Moreover, assume that for any $L>0$, the function $g(x,t, \cdot)$ is Lipschitz continuous on the interval $[-L, L]$, uniformly in $[0,1]\times[0,+\infty)$.
If, in addition, there exists a function $G\in C^1(\mathbb R)$ such that $|g(x,t, s)|\le G(s)$ for all $(x,t)\in [0,1]\times[0,+\infty)$ and $s\in \mathbb R$, then Theorems 1.1 and 1.3 remain valid.
\end{remark}

The rest of this paper is organized as follows. Section 2 is devoted to establishing decay estimates for the $L^\infty$-norm and the $H^1$-norm of solutions to
the homogeneous quasilinear parabolic equations, as well as the maximum modulus estimate for solutions to the corresponding inhomogeneous equations. In Section 3, we give an $L^\infty$-cost estimate
for the control function when the additive control system (\ref{1.5}) achieves null controllability. In Section 4,
we prove Theorems 1.1 and 1.2. Finally, in Section 5, we prove, without imposing any restrictions on the growth of the nonlinearity $g(s)$ with respect to the variable $s$, the existence of a local solution with duration 1
in time for equation (\ref{1.1}), as well as the existence of a time optimal control.

\section{Estimates for solutions of quasilinear parabolic equations}
Consider the following quasilinear equation
\begin{equation}\label{2.1}
\left\{\begin{array}{ll}
y_t-(a(y)y_x)_x=0, &(x,t)\in Q_T,
\\[2mm]
y(0,t)=y(1,t)=0, &t\in (0,T),
\\[2mm]
y(x,0)= y_0(x), &x \in (0,1),
\end{array}\right.
\end{equation}
where the diffusion coefficient $a(s)$ is assumed to satisfy conditions (\ref{1.2}) and (\ref{1.3}).

According to the classical theory of parabolic equations, under conditions (\ref{1.2}) and (\ref{1.3}), if the initial state $y_0$ is in $H^1_0(0,1)$, then the problem (\ref{2.1}) admits a unique solution $y\in\stackrel{\circ}{W}{\hskip-1mm}^{2,1}_2(Q_T)$. Here $\stackrel{\circ}{W}{\hskip-1mm}^{2,1}_2(Q_T)$ denotes the closure in $W^{2,1}_2(Q_T)$ of all functions infinitely differentiable on $\overline{Q}_T$,
which vanishes near the lateral boundary $\{0,1\}\times(0,T)$.

Since we are in the one-dimensional setting, the embedding theorem yields
\begin{equation}\label{2.2}
W^{2,1}_2(Q_T)\subset C^{\alpha, \alpha/2}(\overline{Q}_T), \quad \ \alpha\in (0, 1/2].
\end{equation}

For any $T>0$, denote
$$
\varGamma_{T}=[0,1]\times[0,T]\backslash(0,1)\times(0,T].
$$
It is commonly referred to as the parabolic boundary of $Q_T$.

For $w\in W_2^{1,1}(Q_T)$, define the least upper bound of $w$ on $Q_T$ and $\varGamma_T$ as
$$
\sup_{Q_T}w=\inf\{l;\ (w-l)_+=0,\ \text{a.e. in } Q_T\},
$$
and
$$
\sup_{\varGamma_T}w=\inf\left\{l;\ (w-l)_+ \in\stackrel{\bullet}{W}{\hskip-1mm}^{1,1}_2(Q_T)\right\},
$$
where $s_+=\max\{s,0\}$, $\stackrel{\bullet}{W}{\hskip-1mm}^{1,1}_2(Q_T)$ denotes the closure in $W^{1,1}_2(Q_T)$ of all functions infinitely differentiable on $\overline{Q}_T$, which vanish near the parabolic boundary $\varGamma_T$.

\begin{lemma}\label{Lemma 2.1}
If $y\in \stackrel{\circ}{W}{\hskip-1mm}^{1,1}_2(Q_T)$ is a weak solution of the homogeneous quasilinear parabolic problem (\ref{2.1}), then
\begin{equation}\label{2.3}
\sup_{\overline{Q}_T}y=\sup_{\varGamma_T}y, \ \ \inf_{\overline{Q}_T}y=\inf_{\varGamma_T}y.
\end{equation}
\end{lemma}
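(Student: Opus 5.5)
We argue by the Stampacchia truncation method applied to the weak formulation of (\ref{2.1}). We prove only the statement for the supremum. The one for the infimum then follows by applying it to $-y$, which is a weak solution of the same type of problem with diffusion coefficient $\tilde a(s)=a(-s)$; this coefficient still satisfies (\ref{1.3}).

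Let $l=\sup_{\varGamma_T}y$. The inequality $\sup_{Q_T}y\ge\sup_{\varGamma_T}y$ is trivial for continuous $y$, since $y\in W^{2,1}_2\subset C(\overline Q_T)$ by (\ref{2.2}). In the weak setting it follows directly from the definitions, because $(y-k)_+=0$ a.e. implies $(y-k)_+\in\stackrel{\bullet}{W}{\hskip-1mm}^{1,1}_2(Q_T)$. So it suffices to show $(y-k)_+=0$ a.e. in $Q_T$ for every $k>l$. Fix such a $k$. Then $v=(y-k)_+$ belongs to $\stackrel{\bullet}{W}{\hskip-1mm}^{1,1}_2(Q_T)$, so it vanishes on the lateral boundary and at $t=0$. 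Moreover $v_x=y_x1_{\{y>k\}}$ and $v_t=y_t1_{\{y>k\}}$ a.e.

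We use $v\,1_{(0,\tau)}$ as a test function in the weak formulation, for arbitrary $\tau\in(0,T]$. This gives
$$
\int_0^\tau\!\!\int_0^1 y_t v\,dxdt+\int_0^\tau\!\!\int_0^1 a(y)y_xv_x\,dxdt=0 .
$$
Since $y_tv=\frac12\partial_t(v^2)$ a.e. and $v(\cdot,0)=0$, the first term equals $\frac12\int_0^1v^2(x,\tau)dx$. By (\ref{1.3}), the second term equals $\int_0^\tau\!\int_0^1 a(y)v_x^2\ge\rho\int_0^\tau\!\int_0^1 v_x^2\ge0$. Hence
$$
\frac12\int_0^1 v^2(x,\tau)\,dx+\rho\int_0^\tau\!\!\int_0^1 v_x^2\,dxdt\le 0\qquad\forall\tau\in(0,T],
$$
so $v\equiv0$ a.e. in $Q_T$, that is, $y\le k$ a.e. Letting $k\downarrow l$ gives $\sup_{Q_T}y\le l$. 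Because $y$ is continuous, the a.e. bound becomes a pointwise bound on $\overline Q_T$, which yields (\ref{2.3}).

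The main technical obstacle is justifying the time-derivative identity $\int_0^\tau\!\int y_tv=\frac12\|v(\tau)\|^2_{L^2}$ for a merely $W^{1,1}_2$ solution. Two facts are needed here. First, the chain rule $\partial_t (y-k)_+=y_t1_{\{y>k\}}$ holds for Sobolev functions. Second, the integral $\int_0^1 v^2(x,t)\,dx$ is absolutely continuous in $t$ and vanishes at $t=0$; this uses the fact that $v$ is a $W^{1,1}_2$-limit of smooth functions vanishing near $\varGamma_T$. Alternatively, one may work with Steklov averages $y_h$, test with $(y_h-k)_+$, and pass $h\to0$. Either route uses only standard properties of these spaces, and the nonlinearity $a(y)$ causes no difficulty because it enters only through the lower bound $a\ge\rho$.
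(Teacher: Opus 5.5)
Your proposal is correct and uses the same Stampacchia truncation argument as the paper. You test with $(y-k)_+$ for $k>\sup_{\varGamma_T}y$, use that $(y(\cdot,0)-k)_+=0$ and $a\ge\rho$ to get $y\le k$ a.e., and then upgrade to a pointwise bound through the continuity given by (\ref{2.2}).

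The only difference is cosmetic. You cut off at an arbitrary time $\tau$ and read off $v(\cdot,\tau)=0$ from the $L^2$ term. This requires extending (\ref{2.4}) by density to test functions in $L^2(0,T;H^1_0(0,1))$, which is harmless because no $\varphi_t$ appears in (\ref{2.4}). The paper instead tests on all of $Q_T$, drops the nonnegative term at $t=T$, and concludes from the gradient term via Poincar\'e's inequality.
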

{\bf Proof.} \ For any $\varphi\in\stackrel{\circ}{W}{\hskip-1mm}^{1,1}_2(Q_T)$, $y$ satisfies
\begin{equation}\label{2.4}
\iint_{Q_T}\left(y_t\varphi+a(y)y_x\varphi_x\right)dxdt=0
\end{equation}

Set $l=\sup_{\varGamma_T} y$ and choose $\varphi=(y-k)_+$ with $k>l$ in (\ref{2.4}). Then $\varphi\in \stackrel{\bullet}{W}{\hskip-1mm}^{1,1}_2(Q_T)\subset \stackrel{\circ}{W}{\hskip-1mm}^{1,1}_2(Q_T)$. Substituting $\varphi$ into (\ref{2.4}) and using the operation rules of weak derivatives, give
\[
\iint_{Q_T}(y-k)_t(y-k)_+dxdt+\iint_{Q_T}a(y)(y-k)_x{(y-k)_+}_xdxdt=0,
\]
i.e.,
\[
\frac{1}{2}\iint_{Q_T}{(y-k)_+^2}_tdxdt+\iint_{Q_T}a(y)\big|{(y-k)_+}_x\big|^2dxdt=0.
\]
Thus
\[
\frac{1}{2}\int_0^1(y(x,T)-k)_+^2dx-\frac{1}{2}\int_0^1(y(x,0)-k)_+^2dx+\iint_{Q_T}a(y)|{(y-k)_+}_x|^2dxdt=0.
\]
Since $k>l$, we have $\int_0^1(y(x,0)-k)_+^2dx=0$, and hence
$$
\iint_{Q_T}a(y)\big|{(y-k)_+}_x\big|^2dxdt\leq 0.
$$
Combining this with Poincar$\acute{\mbox{e}}$'s inequality in the space $H^1_0(0,1)$ and the uniformly parabolicity condition (\ref{1.3}), we further obtain
\[
\iint_{Q_T}(y-k)_+^2dxdt\leq\frac{1}{\pi^2}\iint_{Q_T}\big|{(y-k)_+}_x\big|^2dxdt\leq\frac{1}{\pi^2\rho}\iint_{Q_T}a(y)\big|{(y-k)_+}_x\big|^2dxdt\le 0.
\]
Hence $y(x,t)\leq k$ a.e. in $Q_T$ and the first equality in the lemma follows from the arbitrariness of $k>l$ and (\ref{2.2}). The second equality
$\inf_{\overline{Q}_T}y=\inf_{\varGamma_T}y$ follows by a similar argument. The proof is complete. $\Box$

\vskip2mm
Let $y\in \stackrel{\circ}{W}{\hskip-1mm}^{2,1}_2(Q_T)$ be a solution of the homogeneous quasilinear parabolic problem (\ref{2.1}). From (\ref{2.2}), we see that
$$
\max_{\overline{Q}_T}y=\sup_{\overline{Q}_T}y, \ \min_{\overline{Q}_T}y=\inf_{\overline{Q}_T}y, \ \max_{\varGamma_T}y=\sup_{\varGamma_T}y, \ \min_{\varGamma_T}y=\inf_{\varGamma_T}y.
$$
In view of Lemma 2.1,
$$
\max_{\overline{Q}_T}|y|=\max\{\max_{\overline{Q}_T}y, -\min_{\overline{Q}_T}y\}=\max\{\max_{\varGamma_T}y, -\min_{\varGamma_T}y\}=\max_{\varGamma_T}|y|,
$$
and we deduce that
\begin{corollary}\label{Coro.2.1}
Let $y\in\stackrel{\circ}{W}{\hskip-1mm}^{2,1}_2(Q_T)$ be a solution of the homogeneous quasilinear parabolic problem (\ref{2.1}). Then
\begin{equation}\label{2.5}
\max_{\overline{Q}_T}|y|=\max_{\varGamma_T}|y|.
\end{equation}
\end{corollary}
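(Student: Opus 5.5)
The corollary follows from Lemma 2.1 once we use the continuity given by (\ref{2.2}); the plan is to pass from essential suprema to genuine maxima and then combine the two one-sided identities.

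First, since $\stackrel{\circ}{W}{\hskip-1mm}^{2,1}_2(Q_T)\subset\stackrel{\circ}{W}{\hskip-1mm}^{1,1}_2(Q_T)$ and a strong solution of (\ref{2.1}) is in particular a weak solution (multiply the equation by a test function and integrate by parts in $x$, using the lateral boundary condition), Lemma 2.1 applies. It gives $\sup_{\overline{Q}_T}y=\sup_{\varGamma_T}y$ and $\inf_{\overline{Q}_T}y=\inf_{\varGamma_T}y$.

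Second, by (\ref{2.2}), $y$ is continuous on the compact set $\overline{Q}_T$. The key point is to identify the Sobolev-type suprema with classical maxima. For the interior quantity, a continuous function satisfies $(y-l)_+=0$ a.e. in $Q_T$ iff $y\le l$ everywhere on $\overline{Q}_T$, since an open set of positive measure would otherwise appear. For the boundary quantity, $(y-l)_+\in\stackrel{\bullet}{W}{\hskip-1mm}^{1,1}_2(Q_T)$ should correspond to $y\le l$ on $\varGamma_T$, because traces of continuous $W^{1,1}_2$ functions on $\{t=0\}$ and on the lateral sides coincide with pointwise restrictions. This identification of $\sup_{\varGamma_T}$ with $\max_{\varGamma_T}$ is the only delicate step. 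If needed, I would handle it through trace continuity: for $l<\max_{\varGamma_T}y$, the positive part $(y-l)_+$ has a nonzero continuous trace on part of $\varGamma_T$, so it cannot be a limit of functions vanishing near $\varGamma_T$. For $l>\max_{\varGamma_T}y$, uniform continuity makes $(y-l)_+$ vanish in a neighbourhood of $\varGamma_T$, and mollification then places it in the closure. Taking the infimum over admissible $l$ gives the equality, and the analogous statements for $\min$ follow by applying this to $-y$.

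Finally, the conclusion is a direct computation:
\[
\max_{\overline{Q}_T}|y|=\max\{\max_{\overline{Q}_T}y,\,-\min_{\overline{Q}_T}y\}=\max\{\max_{\varGamma_T}y,\,-\min_{\varGamma_T}y\}=\max_{\varGamma_T}|y|.
\]
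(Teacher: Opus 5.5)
Your proposal is correct and follows the same route as the paper: invoke Lemma 2.1, use the continuity from (\ref{2.2}) to identify the Sobolev-type suprema and infima with genuine maxima and minima, and then combine them via $\max|y|=\max\{\max y,\,-\min y\}$. The only difference is that you actually justify the identification $\sup_{\varGamma_T}y=\max_{\varGamma_T}y$, via trace continuity in one direction and a cutoff-plus-mollification argument in the other; the paper simply asserts this as a consequence of (\ref{2.2}).
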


\begin{proposition}\label{Prop2.1}
Let $y_0\in H^1_0(0,1)$ and let $y\in\stackrel{\circ}{W}{\hskip-1mm}^{2,1}_2(Q_T)$ be a solution of the homogeneous quasilinear parabolic problem (\ref{2.1}). Then
\begin{equation}\label{2.6}
\max\limits_{x\in[0, 1]}|y(x, t)|\leq (2\rho)^{-1/2}\|y_0\|_{L^2(0,1)}t^{-1/2}.
\end{equation}
\end{proposition}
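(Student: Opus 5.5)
Combine the monotonicity of the $L^2$-norm with an $L^\infty$ bound obtained from the energy identity. Since we are in one dimension, the natural tool is the elementary inequality $\max_x|v(x)|^2\le \|v\|_{L^2}\|v_x\|_{L^2}$ for $v\in H^1_0(0,1)$; a cleaner route, which matches the constant $(2\rho)^{-1/2}$, goes through the $L^\infty$–$L^1$-type bound $\max_x |v(x)|^2 \le \int_0^1 |v_x|^2dx$ combined with a time-weighted energy estimate.

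\emph{Step 1 (energy identity).} Multiply (\ref{2.1}) by $y$ and integrate over $(0,1)\times(s,t)$; using the boundary conditions and (\ref{1.3}),
$$
\frac12\|y(t)\|_{L^2}^2+\rho\int_s^t\!\!\int_0^1 y_x^2\,dxd\tau\le \frac12\|y(s)\|_{L^2}^2 .
$$
In particular $\|y(t)\|_{L^2}$ is nonincreasing and $\rho\int_0^t\|y_x(\tau)\|^2_{L^2}d\tau\le \frac12\|y_0\|^2_{L^2}$.

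\emph{Step 2 (monotonicity of $\max_x|y(\cdot,t)|$).} By Corollary \ref{Coro.2.1} applied on the time interval $[\tau,t]$ (translating time, which is legitimate since the equation is autonomous), $\max_x|y(x,t)|\le \max_x|y(x,\tau)|$ for $\tau\le t$, because on the parabolic boundary of $(0,1)\times(\tau,t)$ the solution vanishes on the lateral sides. Hence $m(\tau):=\max_x|y(x,\tau)|$ is nonincreasing.

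\emph{Step 3 (conclusion).} For $v\in H^1_0(0,1)$ we have $|v(x)|=|\int_0^x v_x|\le \|v_x\|_{L^2}$, so $m(\tau)^2\le \|y_x(\tau)\|_{L^2}^2$. Integrating over $(0,t)$ and using monotonicity,
$$
t\,m(t)^2\le\int_0^t m(\tau)^2d\tau\le \int_0^t\|y_x(\tau)\|_{L^2}^2d\tau\le \frac{1}{2\rho}\|y_0\|_{L^2}^2,
$$
which is exactly (\ref{2.6}).

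The main obstacle is the justification in Step 2: applying Corollary \ref{Coro.2.1} on subintervals $[\tau,t]$ requires that $y(\cdot,\tau)$ be a legitimate initial datum there. Since $y\in \stackrel{\circ}{W}{\hskip-1mm}^{2,1}_2(Q_T)$ has a continuous trace in $H^1_0$ for every $\tau$, and its restriction solves (\ref{2.1}) on $(\tau,t)$, this works. Alternatively, Lemma \ref{Lemma 2.1} could be run directly on $(0,1)\times(\tau,t)$ with the test function $(y-k)_+\chi_{(\tau,t)}$. The energy identity in Step 1 is justified by the $W^{2,1}_2$ regularity.
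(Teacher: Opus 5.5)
Your proof is correct and follows the paper's argument essentially step for step: you get monotonicity of $\max_x|y(\cdot,t)|$ from Corollary 2.1 on subintervals, bound $\max_x|y(x,\tau)|^2\le\int_0^1 y_x^2(x,\tau)\,dx$, and integrate the energy inequality over $(0,t)$ together with monotonicity to reach $t\,m(t)^2\le(2\rho)^{-1}\|y_0\|_{L^2}^2$. The only differences are cosmetic: the paper proves monotonicity by contradiction and integrates the differential form $\frac{d}{dt}\|y\|_{L^2}^2\le -2\rho M^2(t)$, which is equivalent to your integrated energy estimate.
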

{\bf Proof.} \ For any $t\in [0, T]$, it follows from (\ref{2.2}) that $y(\cdot, t)\in C^{\alpha}([0,1])$. Therefore, we can define
$$
M(t)=\max\limits_{x\in[0, 1]}|y(x, t)|.
$$

We assert further that
\begin{equation}\label{2.7}
M(t_{1})\geq M(t_{2}), \quad \forall 0\le t_{1}<t_{2}\le T.
\end{equation}
In fact, if there exist $t_{1},\ t_{2}\in (0, T)$ with $t_{1}<t_{2}$, such that $M(t_{1})< M(t_{2})$, then there exist $x_1\in [0,1]$ and $x_2\in [0,1]$, respectively, such that
$$
|y(x_{2}, t_{2})|=\max\limits_{x\in[0, 1]}|y(x, t_{2})|>\max\limits_{x\in[0, 1]}|y(x, t_{1})|=|y(x_{1},t_{1})|.
$$
Note that $y(0,t)=y(1,t)=0$ implies
$$
\max\limits_{x\in[0, 1]}|y(x, t_{1})|=\max\limits_{(x, t)\in\varGamma^{t_{2}}_{t_{1}}}|y(x, t)|,
$$
where $\varGamma_{t_1}^{t_2}=[0,1]\times[t_1,t_2]\backslash(0,1)\times(t_1,t_2]$. Therefore,
$$
\begin{aligned}
\max\limits_{(x, t)\in [0,1]\times[t_1,t_2]}|y|\geq |y(x_{2}, t_{2})|=\max\limits_{x\in[0, 1]}|y(x, t_{2})|>\max\limits_{(x, t)\in\varGamma^{t_{2}}_{t_{1}}}|y|.
\end{aligned}
$$
This contradicts (\ref{2.5}). Thus, we know that $M(t)$ is monotonically non-increasing.

Utilizing the Newton-Leibniz formula and H\"{o}ler's inequality, we deduce that
\begin{equation}\label{2.8}
\begin{aligned}
|y(x, t)|=&\left|\int^{x}_{0}y_{z}(z, t)dz\right|\leq\sqrt{x}\left(\int^{x}_{0}|y_{z}(z, t)|^{2}dz\right)^{1/2}
\end{aligned}
\end{equation}
for $x\in(0, 1)$, and hence
\begin{equation}\label{2.9}
M^{2}(t)\leq \int^{1}_{0}|y_{z}(z, t)|^{2}dz.
\end{equation}
Multiplying \eqref{2.1} by $y$, integrating over $(0, 1)$ with respect to $x$ and recalling \eqref{1.3}, we obtain
\begin{equation}\label{2.10}
\begin{aligned}
\frac{1}{2}\frac{d}{dt}\int^{1}_{0}y^{2}(x, t)dx=-\int^{1}_{0}a(y(x,t))y^{2}_{x}(x,t)dx\leq-\rho\int^{1}_{0}y^{2}_{x}(x,t)dx.
\end{aligned}
\end{equation}
Combining (\ref{2.9}) and (\ref{2.10}), we discover
\begin{equation}\label{2.11}
\frac{d}{dt}\int^{1}_{0}y^{2}(x, t)dx\leq-2\rho M^{2}(t).
\end{equation}
Now we integrate \eqref{2.11} over $(0,t)$ and employ the monotonic non-increasing property of $M(t)$ to deduce
\begin{equation}\label{2.12}
\int^{1}_{0}y^{2}(x, t)dx-\int^{1}_{0}y^{2}_0dx\leq -2\rho\int^{t}_{0}M^{2}(\tau)d\tau
\leq-2\rho\int^{t}_{0}M^{2}(t)d\tau=-2\rho M^{2}(t)t.
\end{equation}
Thus
$$
M^{2}(t)\leq\frac{\int^{1}_{0}y^{2}_{0}dx}{2\rho}t^{-1}.
$$
Consequently this implies \eqref{2.6}. The proof is complete. \hfill $\Box$
\vskip2mm
Before proving the $H^1$-decay estimate for the solution of the (\ref{2.1}), let us first recall the Gagliardo-Nirenberg interpolation inequality (see \cite{N}).

For any $u\in L^q(0,1)$ and $D^m u\in L^r(0,1)$, we have
\begin{equation}\nonumber
\|D^k u\|_{L^p(0,1)} \leq C \|u\|_{L^q(0,1)}^{1-\alpha}\|D^m u\|_{L^r(0,1)}^{\alpha}+C\|u\|_{L^l(0,1)},
\end{equation}
where
$$
1\leq q,r\le+\infty, \ 0\le k<m, \ \frac{1}{p}=k+\left(\frac{1}{r}-m\right)\alpha+\frac{1-\alpha}{q}, \ \quad \forall \alpha \in \left[\frac{k}{m}, 1\right],\ l>1,
$$
and $C$ is a constant depending only on $q, r, k, m,\alpha$ and $l$, and does not depend on $u$. Take $k=1$, $p=4$, $m=2$, $r=2$, $q=+\infty$, $\alpha=1/2$, $l=2$, we discover
\begin{align}
\label{2.13}
\|u_x\|_{L^4(0,1)}\leq C_0\|u\|_{L^\infty(0,1)}^{1/2}\|u_{xx}\|_{L^2(0,1)}^{1/2}+C_0\|u\|_{L^2(0,1)},
\end{align}
where $C_0$ is a constant independent of $u$.

From $u\in H^2(0,1)$, it immediately follows that $u\in L^\infty(0,1)$ and $u_x\in L^4(0,1)$. Here, we fully exploit the advantage brought by the one-dimensional nature of the space.

\begin{proposition}\label{Prop2.2}
Let $y_0\in H^1_0(0,1)$ and let $y\in \stackrel{\circ}{W}{\hskip-1mm}^{2,1}_2(Q_T)$ be a solution of the homogeneous quasilinear parabolic problem (\ref{2.1}). If
\begin{equation}
\label{2.14}
\|y(\cdot,t)\|_{L^\infty(0,1)}\le \frac{\rho}{4MC_0^2},
\end{equation}
then
\begin{equation}\label{2.15}
\|y(\cdot, t)\|_{H^1(0,1)}\leq \exp\Big\{-\frac{(2\pi^2-1)\rho t}{2(\pi^2+1)}\Big\}\|y_0\|_{H^1(0,1)}.
\end{equation}
\end{proposition}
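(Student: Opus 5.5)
The plan is a direct energy estimate at the $H^1$ level, closed by combining with the $L^2$ identity (\ref{2.10}) and a Gronwall argument. I read the hypothesis (\ref{2.14}) as holding on the whole interval $[0,t]$. By the monotonicity (\ref{2.7}) established in the proof of Proposition \ref{Prop2.1}, this is equivalent to requiring it at the initial time, so I may use it for every $s\in[0,t]$.

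\textbf{Step 1: the $H^1$ energy identity.} I multiply the equation in (\ref{2.1}) by $-y_{xx}$ and integrate over $(0,1)$. Since $y_t=0$ at $x=0,1$, the boundary term from $\int y_t y_{xx}$ vanishes. Writing $(a(y)y_x)_x=a(y)y_{xx}+a'(y)y_x^2$, I obtain
\begin{equation*}
\frac12\frac{d}{dt}\int_0^1 y_x^2dx+\int_0^1 a(y)y_{xx}^2dx=-\int_0^1 a'(y)y_x^2y_{xx}dx .
\end{equation*}
For $y\in \stackrel{\circ}{W}{\hskip-1mm}^{2,1}_2(Q_T)$ this is justified by smooth approximation, or by working with the integrated-in-time form.

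\textbf{Step 2: controlling the cubic term (the main obstacle).} By (\ref{1.2}) and H\"older, the right-hand side is bounded by $M\|y_x\|_{L^4}^2\|y_{xx}\|_{L^2}$. Squaring (\ref{2.13}) gives $\|y_x\|_{L^4}^2\le 2C_0^2\|y\|_{L^\infty}\|y_{xx}\|_{L^2}+2C_0^2\|y\|_{L^2}^2$. Under (\ref{2.14}) the first piece contributes at most $\frac{\rho}{2}\|y_{xx}\|_{L^2}^2$. For the second piece I use Young's inequality with weight $\varepsilon\rho$, which gives
\begin{equation*}
2MC_0^2\|y\|_{L^2}^2\|y_{xx}\|_{L^2}\le \varepsilon\rho\|y_{xx}\|_{L^2}^2+\frac{M^2C_0^4}{\varepsilon\rho}\|y\|_{L^2}^4 .
\end{equation*}
I then bound $\|y\|_{L^2}^4\le\|y\|_{L^\infty}^2\|y\|_{L^2}^2\le \frac{\rho^2}{16M^2C_0^4}\|y\|_{L^2}^2$, again by (\ref{2.14}). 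This makes the quartic term a harmless $\frac{\rho}{16\varepsilon}\|y\|_{L^2}^2$. Using $a\ge\rho$, the result is
\begin{equation*}
\frac{d}{dt}\|y_x\|_{L^2}^2+(1-2\varepsilon)\rho\|y_{xx}\|_{L^2}^2\le\frac{\rho}{8\varepsilon}\|y\|_{L^2}^2 .
\end{equation*}
The delicate point is balancing $\varepsilon$ so that the final rate comes out positive, and exactly as stated.

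\textbf{Step 3: closing with Poincar\'e-type inequalities.} Since $y(0,t)=y(1,t)=0$, the function $y_x$ has zero mean. The Wirtinger inequality then gives $\|y_{xx}\|^2_{L^2}\ge\pi^2\|y_x\|_{L^2}^2$. Poincar\'e in $H^1_0$ gives $\|y\|_{L^2}^2\le\pi^{-2}\|y_x\|_{L^2}^2$. I add the inequality from Step 2 to (\ref{2.10}), which reads $\frac{d}{dt}\|y\|^2_{L^2}\le-2\rho\|y_x\|^2_{L^2}$. For $E(t)=\|y\|_{L^2}^2+\|y_x\|_{L^2}^2=\|y\|_{H^1}^2$, this yields $E'\le -c\rho\|y_x\|_{L^2}^2+\text{(small)}\,\rho\|y\|_{L^2}^2$. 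Converting via $\|y_x\|^2_{L^2}\ge\frac{\pi^2}{\pi^2+1}E$ gives $E'\le-\lambda E$. I expect the constants to produce $\lambda=\frac{(2\pi^2-1)\rho}{\pi^2+1}$, where the "$-1$" comes from the lower-order $\|y\|_{L^2}^2$ term and the denominator $\pi^2+1$ from this conversion. If the first choice of Young weights does not reproduce the constant exactly, I will retune $\varepsilon$, or drop part of the dissipative term $\|y_{xx}\|^2$.

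\textbf{Step 4: conclusion.} Gronwall gives $E(t)\le e^{-\lambda t}E(0)$. Taking square roots yields (\ref{2.15}).
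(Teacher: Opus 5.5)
Your proposal is correct and is essentially the paper's argument. Both use the $H^1$ energy and control the cubic term $\int_0^1 a'(y)y_x^2y_{xx}\,dx$ through (\ref{2.13}), (\ref{2.14}) and $\|y\|_{L^2}^4\le\|y\|_{L^\infty}^2\|y\|_{L^2}^2$, then finish with Poincar\'e and Gronwall. The only difference is that the paper applies Young before the Gagliardo--Nirenberg step, with $\varepsilon=\rho/M$, so the $y_{xx}^2$ terms cancel exactly, whereas you keep some leftover dissipation.

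The constant does come out as stated. For instance, $\varepsilon=1/8$ with the $\|y_{xx}\|_{L^2}^2$ term discarded gives exactly $\frac{d}{dt}\|y\|_{H^1}^2\le-2\rho\|y_x\|_{L^2}^2+\rho\|y\|_{L^2}^2$, which is twice (\ref{2.23}). Your appeal to (\ref{2.7}) to have (\ref{2.14}) on all of $[0,t]$ also makes explicit an assumption the paper uses without stating it.
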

{\bf Proof.} \ Define the energy
\begin{equation}
\label{2.16}
E(t):=\frac{1}{2}\int_0^1\left( y^2(x,t) + y_x^2(x,t)\right) dx.
\end{equation}
Thus, differentiating (\ref{2.16}) with respect to $t$, we find
\begin{equation}
\label{2.17}
E'(t)=\int_0^1 \left( y y_t + y_x y_{xt} \right)dx:=J_1(t)+J_2(t).
\end{equation}

Next, we proceed to handle $J_1$ and $J_2$ separately.

Substituting $y_t = (a(y)y_x)_x$, we get by integrating by parts that
\begin{equation}
\label{2.18}
J_1(t)=\int_0^1 yy_tdx=\int_0^1 y (a(y)y_x)_x dx=ya(y)y_x\big|_0^1-\int_0^1 a(y) y_x^2dx=-\int_0^1 a(y) y_x^2dx.
\end{equation}
In view of the uniform parabolicity condition (\ref{1.3}), we obtain
\begin{equation}
\label{2.19}
J_1(t)=-\int_0^1 a(y) y_x^2 dx\leq-\rho\int_0^1 y_x^2 dx.
\end{equation}

For the second term $J_2$, by using the homogeneous boundary condition, we have $y_t(0, t)=y_t(1, t)=0$, and thus
\begin{align}
\begin{aligned}
J_2(t)&=y_x y_t\Big|_0^1-\int_0^1 y_{xx} y_tdx
\\
&=-\int_0^1 y_{xx}(a(y)y_x)_xdx
\\
&=-\int_0^1 y_{xx}\left(a'(y)y_x^2 + a(y)y_{xx}\right)dx
\\
&=-\int_0^1 a'(y)y_x^2 y_{xx}dx-\int_0^1 a(y)y_{xx}^2dx.
\end{aligned}
\label{2.20}
\end{align}
The condition (\ref{1.2}) and Cauchy's inequality with $\varepsilon$ imply
\begin{equation}
\left|-\int_{0}^{1} a'(y) y_x^2 y_{xx}dx\right|\leq M \int_{0}^{1} |y_x|^2 |y_{xx}|dx
\leq \frac{M}{2\varepsilon}\int_{0}^{1}y_x^4dx+\frac{M\varepsilon}{2}\int_{0}^{1} y_{xx}^2dx.
\label{2.21}
\end{equation}
By (\ref{2.13}), we have
\begin{align}
\nonumber
\|y_x(\cdot, t)\|_{L^4(0,1)}\leq C_0\|y(\cdot,t)\|_{L^\infty(0,1)}^{1/2}\|y_{xx}(\cdot,t)\|_{L^2(0,1)}^{1/2}+C_0\|y(\cdot,t)\|_{L^2(0,1)}.
\end{align}
Substituting into (\ref{2.21}), we discover
\begin{align}
\begin{aligned}
&\left|-\int_{0}^{1} a'(y) y_x^2 y_{xx}dx\right|
\\[1mm]
\leq&\frac{M}{2\varepsilon}\left[C_0\|y(\cdot,t)\|_{L^\infty(0,1)}^{1/2}\left(\int_{0}^1 y_{xx}^2dx\right)^{1/4}+C_0\left(\int_{0}^1 y^2dx\right)^{1/2}\right]^4+\frac{M\varepsilon}{2}\int_{0}^{1}y_{xx}^2dx
\\[1mm]
\le&\frac{8MC_0^4}{\varepsilon}\left[\|y(\cdot,t)\|_{L^\infty(0,1)}^{2}\int_{0}^1 y_{xx}^2dx+\left(\int_{0}^1 y^2dx\right)^2\right]+\frac{M\varepsilon}{2}\int_{0}^{1}y_{xx}^2dx
\\[1mm]
\le&\frac{8MC_0^4}{\varepsilon}\|y(\cdot,t)\|_{L^\infty(0,1)}^{2}\left(\int_{0}^1 y_{xx}^2dx+\int_{0}^1 y^2dx\right)+\frac{M\varepsilon}{2}\int_{0}^{1}y_{xx}^2dx.
\end{aligned}
\nonumber
\end{align}
Under the assumption (\ref{2.14}) and with $\varepsilon=\rho/M$, we conclude
\begin{align}
\begin{aligned}
J_2(t)&\leq \frac{\rho^2}{2M\varepsilon}\int_{0}^1y_{xx}^2dx+\frac{M\varepsilon}{2}\int_{0}^{1}y_{xx}^2dx-\int_{0}^{1}a(y)y_{xx}^2dx+\frac{\rho^2}{2M\varepsilon}\int_{0}^1 y^2dx
\\[1mm]
&\leq \left(\frac{\rho^2}{2M\varepsilon}+\frac{M\varepsilon}{2}-\rho\right)\int_{0}^1y_{xx}^2dx+\frac{\rho^2}{2M\varepsilon}\int_{0}^1 y^2dx
\\[1mm]
&=\frac{\rho}{2}\int_{0}^1 y^2dx.
\end{aligned}
\label{2.22}
\end{align}
Consequently, it follows from \eqref{2.17}, (\ref{2.19}) and \eqref{2.22} that
\begin{align}
\begin{aligned}
E'(t) \leq -\rho\int_0^1 y_x^2(x,t)dx+\frac{\rho}{2}\int_{0}^1 y^2(x,t)dx.
\end{aligned}
\label{2.23}
\end{align}
Using the Poincar$\acute{\mbox{e}}$ inequality on the space $H^1_0(0,1)$, we have
\begin{equation}
\label{2.24}
\int_{0}^1 y^2(x,t)dx\le \frac{1}{\pi^2}\int_0^1 y_x^2(x,t)dx.
\end{equation}
Combining (\ref{2.23}) and (\ref{2.24}), and splitting $\rho$ into $(2\pi^2-1)\rho/(2\pi^2+2)$ and $3\rho/(2\pi^2+2)$, leads to
\begin{align}
\begin{aligned}
E'(t)&\leq -\frac{(2\pi^2-1)\rho}{2(\pi^2+1)}\int_0^1 y_x^2(x,t)dx-\frac{3\rho}{2(\pi^2+1)}\int_0^1 y_x^2(x,t)dx+
\frac{\rho}{2}\int_{0}^1 y^2(x,t)dx
\\[1mm]
&\le-\frac{(2\pi^2-1)\rho}{2(\pi^2+1)}\int_0^1 y_x^2(x,t)dx-\frac{3\pi^2\rho}{2(\pi^2+1)}\int_0^1 y^2(x,t)dx+
\frac{\rho}{2}\int_{0}^1 y^2(x,t)dx
\\[1mm]
&=-\frac{(2\pi^2-1)\rho}{2(\pi^2+1)}\left(\int_0^1 y^2(x,t)dx+\int_0^1 y_x^2(x,t)dx\right)
\\[1mm]
&=-\frac{(2\pi^2-1)\rho}{\pi^2+1}E(t),
\end{aligned}
\label{2.25}
\end{align}
and hence
\begin{align}
\nonumber
E(t)\leq E(0)\exp\Big\{-\frac{(2\pi^2-1)\rho t}{\pi^2+1}\Big\},
\end{align}
which implies the decay estimate (\ref{2.15}). The proof is complete. \hfill $\Box$
\vskip2mm

In \cite{Be}, an estimate (see (4.41) in \cite{Be}) that will be needed in the subsequent proofs of this paper is established by using the regularizing properties of (\ref{2.1}).
Without presenting its proof, we state it directly as the following proposition:
\begin{proposition}\label{Prop2.3}
Let $y\in \stackrel{\circ}{W}{\hskip-1mm}^{2,1}_2(Q_T)$ be a solution of the homogeneous quasilinear parabolic problem (\ref{2.1}). For any $t_0>0$,
\begin{equation}\label{2.26}
\int_0^1\big[(A(y(x,t_0)))_{xx}^2+y^2_x(x,t_0)+y^2(x,t_0)\big]dx\le C\int_0^1 (y_0^2+(y_0)_x^2)dx
\end{equation}
for any $y_0$ with $\|y_0\|_{H^1(0,1)}$ sufficiently small, where $C$ is a constant depending on $\rho$ and $t_0$.
\end{proposition}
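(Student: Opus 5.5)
We prove Proposition \ref{Prop2.3} by a weighted-in-time energy argument that uses the regularizing effect of (\ref{2.1}). Here $A(s)=\int_0^s a(\tau)d\tau$, so that $(a(y)y_x)_x=(A(y))_{xx}$, and the equation reads $y_t=(A(y))_{xx}$. We obtain three estimates in turn, for $\|y\|_{L^2}$, then $\|y_x\|_{L^2}$, then $\|y_t\|_{L^2}$ at $t=t_0$, and recover $(A(y))_{xx}$ from the identity $(A(y))_{xx}=y_t$.

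\textbf{Step 1 (zeroth and first order).} Multiplying by $y$ gives $\|y(t)\|_{L^2}\le\|y_0\|_{L^2}$ and $\rho\int_0^T\!\int_0^1 y_x^2\le \frac12\|y_0\|_{L^2}^2$, as in (\ref{2.10}). Next, since $\|y(t)\|_{L^\infty}\le \|y(t)\|_{H^1_0}$ by (\ref{2.9}), smallness of $\|y_0\|_{H^1}$ should give smallness of the $L^\infty$ norm for all $t$. By Corollary \ref{Coro.2.1}, $\max|y|\le \max|y_0|\le \|(y_0)_x\|_{L^2}$, so (\ref{2.14}) holds for all $t$ once $\|y_0\|_{H^1}$ is small. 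Proposition \ref{Prop2.2} then gives $\|y(t)\|_{H^1}\le\|y_0\|_{H^1}$ for all $t$. The computation (\ref{2.20})--(\ref{2.22}) actually yields the slightly stronger inequality
\[
E'(t)+c\int_0^1 y_{xx}^2dx\le C\int_0^1 y^2dx .
\]
We keep a fraction of the $y_{xx}^2$ term, choosing $\varepsilon$ slightly differently. Integrating in time gives $\int_0^T\!\int_0^1 y_{xx}^2\le C\|y_0\|_{H^1}^2$, and hence $\int_0^T\!\int_0^1 y_t^2\le C\|y_0\|_{H^1}^2$, since $y_t=a'(y)y_x^2+a(y)y_{xx}$ and $\|y_x\|_{L^4}$ is controlled by (\ref{2.13}).

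\textbf{Step 2 (time derivative, the main step).} We differentiate the equation in $t$. Formally, $w=y_t$ satisfies $w_t=(a(y)w)_{xx}$ with $w=0$ at $x=0,1$. The natural multiplier is $a(y)w$, which gives
\[
\frac{d}{dt}\frac12\int_0^1 a(y)w^2dx+\int_0^1|(a(y)w)_x|^2dx=\frac12\int_0^1 a'(y)w^3dx .
\]
To bound the cubic term we use $|w|\le\|w\|_{L^\infty}$, the 1D inequality $\|w\|_{L^\infty}^2\le C\|w\|_{L^2}\|w_x\|_{L^2}$, and the relation $(a(y)w)_x=a(y)w_x+a'(y)y_xw$, where $\|y_x\|_{L^2}$ is small and controlled by Step 1. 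With these we absorb the cubic term into the dissipation plus a term $C\|w\|_{L^2}^4$. We then multiply by the weight $t$ and integrate over $(0,t_0)$. The factor $t$ kills the initial value of $w$, which need not be in $L^2$ for $y_0\in H^1_0$. A Gronwall argument, using $\int_0^{t_0}\|w\|_{L^2}^2\le C\|y_0\|_{H^1}^2$ from Step 1, gives
\[
t_0\|y_t(t_0)\|_{L^2}^2\le C(\rho,t_0)\|y_0\|_{H^1}^2 .
\]
This is where smallness of $\|y_0\|_{H^1}$ is used a second time, to close the superlinear term. The rigorous justification of differentiating in $t$ is done on smooth approximations, with regularized data, and we pass to the limit. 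This step is the main obstacle. If the cubic term resists, the fallback is to work with the equivalent quantity $\int_0^1 a(y)w^2$ and exploit the smallness of $\sup_t\|y_x\|_{L^2}$ more carefully.

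\textbf{Step 3 (conclusion).} Since $(A(y))_{xx}=y_t$, we get $\int_0^1 (A(y(x,t_0)))_{xx}^2dx=\|y_t(t_0)\|_{L^2}^2\le C\|y_0\|_{H^1}^2$. Combining this with Step 1's bounds on $\|y(t_0)\|_{L^2}$ and $\|y_x(t_0)\|_{L^2}$ yields (\ref{2.26}), with $C$ depending on $\rho$ and $t_0$ (and on $M,\kappa$).
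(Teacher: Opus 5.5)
The paper does not prove Proposition~\ref{Prop2.3}. It quotes inequality (4.41) of \cite{Be} without proof, attributing it to the regularizing effect of (\ref{2.1}). Your proposal is a correct, self-contained derivation, so it genuinely differs from the paper's treatment.
\begin{itemize}
\item The reduction through $(A(y))_{xx}=y_t$ to a bound on $\|y_t(\cdot,t_0)\|_{L^2}$ is the right one.
\item Your energy identity for $w=y_t$ is correct; the boundary term drops because $a(y)w=0$ at $x=0,1$.
\item The cubic term is absorbed as you describe.
\item The weight $t$ plus Gronwall gives $t_0\|y_t(\cdot,t_0)\|_{L^2}^2\le C\|y_0\|_{H^1}^2$, once $\int_0^{t_0}\|y_t\|_{L^2}^2dt\le C\|y_0\|_{H^1}^2$ is available.
\end{itemize}
Compared with the citation, your route makes the dependence of $C$ on $\rho,\kappa,M,t_0$ explicit and uses only tools already in the paper. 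The citation is shorter, but it leaves the reader to check that the hypotheses of \cite{Be} ($A\in C^3$, $A'\ge\rho$, bounded $A',A'',A'''$) match (\ref{1.2})--(\ref{1.3}); they do.

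There is one inaccuracy in Step 1. You cannot retain a multiple of $\int_0^1 y_{xx}^2dx$ just by changing $\varepsilon$. Under the threshold (\ref{2.14}), the coefficient $\frac{\rho^2}{2M\varepsilon}+\frac{M\varepsilon}{2}-\rho$ is nonnegative for every $\varepsilon>0$ by AM--GM, and it vanishes only at $\varepsilon=\rho/M$. You must lower the threshold (e.g.\ to $\rho/(8MC_0^2)$). That is legitimate, since $\max|y|\le\|(y_0)_x\|_{L^2}$.

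The detour is unnecessary, though. Multiplying $y_t=(A(y))_{xx}$ by $(A(y))_t=a(y)y_t$ gives, for any $y_0\in H^1_0(0,1)$,
\[
\rho\int_0^T\!\!\int_0^1 y_t^2\,dxdt+\frac12\int_0^1 (A(y))_x^2(x,T)\,dx\le\frac{\kappa^2}{2}\int_0^1 (y_0)_x^2\,dx .
\]
This is the computation (\ref{5.9})--(\ref{5.12}) with $u=0$. It yields both the $L^2(Q_T)$ bound on $y_t$ and $\|y_x(\cdot,t)\|_{L^2}\le(\kappa/\rho)\|(y_0)_x\|_{L^2}$, without Proposition~\ref{Prop2.2}.

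Similarly, in Step 2, $a(y)w$ vanishes at $x=0,1$, so $\|w\|_{L^\infty}\le\rho^{-1}\|(a(y)w)_x\|_{L^2}$. The cubic term is then bounded by $\frac12\|(a(y)w)_x\|_{L^2}^2+C\|w\|_{L^2}^4$ with no condition on $\|y_x\|_{L^2}$. The Gronwall factor is at most $\exp(C\|y_0\|_{H^1}^2)$. So smallness is not actually needed: the estimate holds on bounded subsets of $H^1_0(0,1)$, with $C$ also depending on the bound.

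The approximation step you flag is routine: mollify $a$ (only $C^2$) and the data, then pass to the limit using uniqueness.
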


For the subsequent proof of the controllability of the multiplicative control system (\ref{1.1}), we provide the following maximum modulus estimate for the solution of the following problem:
\begin{equation}\label{2.27}
\left\{\begin{array}{ll}
y_t-(a(y)y_x)_x=f, &(x,t)\in Q_T,
\\[2mm]
y(0,t)=y(1,t)=0, &t\in (0,T),
\\[2mm]
y(x,0)=y_0(x), &x \in (0,1),
\end{array}\right.
\end{equation}
where $y_0\in H^1_0(0,1)$, $f\in L^\infty(Q_T)$, and the diffusion coefficient $a(\cdot)$ satisfies the boundedness and the uniform parabolicity conditions given in (\ref{1.2}) and (\ref{1.3}), respectively.

We need the following elementary inequality, which is a direct corollary of a recursion inequality in the monograph \cite{LSU} (see Lemma 5.6, pp. 95), and
its proof can be found in \cite{WYW} (see Lemma 4.1.4, pp. 107).

\begin{lemma}\label{Lemma 2.2}
Let $\mu(h)$ be a nonnegative and nonincreasing function on $[k_0, +\infty)$, satisfying
\begin{equation}\nonumber
\mu(h)\leq\left(\frac{M}{h - k}\right)^\alpha[\mu(k)]^\beta, \quad \forall h>k\geq k_0
\end{equation}
for some constants $M>0$, $\alpha>0$, $\beta>1$. Then there exists $d>0$ such that
\[
\mu(h)=0, \quad \forall h\geq k_0+d.
\]
\end{lemma}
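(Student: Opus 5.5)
The plan is the classical iteration argument (Lemma 5.6 of Ladyzhenskaya--Solonnikov--Ural'tseva). First reduce to the case $h\geq k+M$. Then run a geometric sequence of levels to force $\mu$ to vanish beyond a finite threshold.

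\textbf{Reduction to $h-k\geq M$.} If $h-k<M$, the hypothesis does not obviously improve anything, so I work only with large gaps. The key point is a scaling of the unknown threshold. Set $\mu_0=\mu(k_0)$, which is finite since $\mu$ is nonnegative and nonincreasing. Look for $d$ of the form $d=M\,\mu_0^{(\beta-1)/\alpha}\,2^{\beta/(\beta-1)}$ (plus a small safeguard if $\mu_0=0$, in which case everything is trivial by monotonicity).

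\textbf{Iteration.} Define levels $k_n=k_0+d(1-2^{-n})$, $n=0,1,2,\dots$, so that $k_n\uparrow k_0+d$ and $k_{n+1}-k_n=d\,2^{-(n+1)}$. Applying the hypothesis with $h=k_{n+1}$, $k=k_n$ gives
\begin{equation*}
\mu(k_{n+1})\leq \Big(\frac{M\,2^{n+1}}{d}\Big)^{\alpha}\mu(k_n)^{\beta}.
\end{equation*}
I claim by induction that
\begin{equation*}
\mu(k_n)\leq \mu_0\, r^{-n}, \qquad r=2^{\alpha/(\beta-1)}>1.
\end{equation*}
The case $n=0$ is trivial. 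For the inductive step, substituting the claim gives
\begin{equation*}
\mu(k_{n+1})\leq \Big(\frac{M}{d}\Big)^{\alpha}2^{\alpha(n+1)}\mu_0^{\beta}\,r^{-n\beta}.
\end{equation*}
I need this to be at most $\mu_0 r^{-(n+1)}$, which is equivalent to
\begin{equation*}
\Big(\frac{M}{d}\Big)^{\alpha}\mu_0^{\beta-1}\,2^{\alpha(n+1)}\,r^{\,n+1-n\beta}\leq 1.
\end{equation*}
With $r^{\beta-1}=2^{\alpha}$, the $n$-dependence is $2^{\alpha n}\,r^{-n(\beta-1)}=1$. What remains is the condition
\begin{equation*}
\Big(\frac{M}{d}\Big)^{\alpha}\mu_0^{\beta-1}\,2^{\alpha}\,r\leq 1,
\end{equation*}
that is,
\begin{equation*}
d\geq M\,\mu_0^{(\beta-1)/\alpha}\,2^{1+1/(\beta-1)}.
\end{equation*}
This is exactly the choice $d=M\mu_0^{(\beta-1)/\alpha}2^{\beta/(\beta-1)}$. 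This exponent bookkeeping is the only place needing care, and it is the step I would check line by line.

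\textbf{Conclusion.} Since $\mu$ is nonincreasing and $k_n<k_0+d$, for every $h\geq k_0+d$ we get $0\leq\mu(h)\leq\mu(k_n)\leq\mu_0 r^{-n}\to0$. Hence $\mu(h)=0$.

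No earlier result of the paper is needed; the main obstacle is purely the choice of $d$ making the induction close, and $\beta>1$ is used precisely to make $r>1$.
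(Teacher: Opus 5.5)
Your proof is correct and is the classical LSU/Stampacchia iteration that the paper relies on by citation (it gives no proof of its own). Your threshold $d=M\mu(k_0)^{(\beta-1)/\alpha}2^{\beta/(\beta-1)}$ is exactly the one the paper uses at the end of Proposition 2.4 (with $\alpha=2$, $\beta=(3p-4)/p$); the only blemish is the opening ``reduction to $h-k\geq M$'', which you never use and which contradicts your own levels (the gaps $d2^{-(n+1)}$ eventually fall below $M$), so simply delete it, while your handling of the case $\mu(k_0)=0$ by taking any $d>0$ is fine.
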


Denote by $V_2(Q_T)$ the set $L^\infty([0,T]; L^2(0,1))\cap W_2^{1,0}(Q_T)$ endowed with the norm
$$
\|y\|_{V_2(Q_T)}=\mbox{ess}\hskip-0.5mm\sup_{0\le t\le T}\|y(\cdot, t)\|_{L^2(0,1)}+\|y_x\|_{L^2(Q_T)}.
$$
Let $\stackrel{\circ}{V}{\hskip-1mm}^{1,0}_2(Q_T)$ denote the normed linear space consisting of those functions $y$ in $V_2(Q_T)$ which satisfy
$$
\lim_{h\to 0}\|y(\cdot, t+h)-y(\cdot,t)\|_{L^2(0,1)}=0,
$$
and $y(0,t)=y(1,t)=0$, for all $t, t+h\in [0,T]$.

According to the $L^2$-theory of the quasilinear parabolic equations (see Theorem 3.3 in \cite{Ch} page 196), the problem (\ref{2.27}) admits a unique
solution $y\in L^\infty(Q_T)\cap\stackrel{\circ}{V}{\hskip-1mm}^{1,0}_2(Q_T)$.

\begin{proposition}\label{Prop2.4}
Let $y_0\in H^1_0(0,1)$, $f\in L^\infty(Q_T)$, and let $y\in L^\infty(Q_T)\cap\stackrel{\circ}{V}{\hskip-1mm}^{1,0}_2(Q_T)$ be a weak solution to the problem (\ref{2.27}). Then
\begin{equation}\label{2.28}
\sup_{Q_T}y\leq\sup_{[0,1]}y_0+\frac{6}{\rho}\|f\|_{L^\infty(Q_T)},
\end{equation}
\begin{equation}\label{2.29}
\inf_{Q_T}y\geq\inf_{[0,1]}y_0-\frac{6}{\rho}\|f\|_{L^\infty(Q_T)}.
\end{equation}
\end{proposition}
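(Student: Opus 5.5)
We prove (\ref{2.28}) by De Giorgi truncation combined with the iteration Lemma 2.2; (\ref{2.29}) then follows by applying (\ref{2.28}) to $-y$, which solves the same type of problem with diffusion coefficient $\tilde a(s)=a(-s)$ (still satisfying (\ref{1.2}), (\ref{1.3})) and right-hand side $-f$.

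\textbf{Energy inequality.} Set $k_0=\sup_{[0,1]}y_0$ and $F=\|f\|_{L^\infty(Q_T)}$. For $k\ge k_0$ the function $(y-k)_+$ vanishes at $t=0$ and at $x=0,1$ (note $k_0\ge 0$ because $y_0\in H^1_0$). We test the weak formulation of (\ref{2.27}) on $Q_t=(0,1)\times(0,t)$ with $\varphi=(y-k)_+$. A Steklov-average argument justifies this, which is standard in the $\stackrel{\circ}{V}{\hskip-1mm}^{1,0}_2$ framework. Using (\ref{1.3}) we obtain
\[
\frac12\int_0^1 (y-k)_+^2(x,t)\,dx+\rho\iint_{Q_t}|(y-k)_{+x}|^2\,dxds\le F\iint_{Q_t}(y-k)_+\,dxds .
\]
Let $A_k=\{(x,t)\in Q_T;\ y>k\}$ and $\mu(k)=|A_k|$. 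Taking the supremum in $t$ gives
\[
\|(y-k)_+\|_{V_2(Q_T)}^2\le C\rho^{-1}F\iint_{A_k}(y-k)_+ .
\]

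\textbf{Iteration.} We use the parabolic embedding $V_2\hookrightarrow L^6(Q_T)$ in one space dimension, which follows from Gagliardo--Nirenberg: $\|v\|_{L^6(Q_T)}\le C\|v\|_{V_2}$ for $v$ vanishing at $x=0,1$. By H\"older,
\[
\iint_{A_k}(y-k)_+\le \|(y-k)_+\|_{L^6}\,\mu(k)^{5/6},
\]
which yields
\[
\|(y-k)_+\|_{L^6}\le C\rho^{-1}F\mu(k)^{5/6}.
\]
For $h>k$,
\[
(h-k)\,\mu(h)^{1/6}\le \|(y-k)_+\|_{L^6}\le C\rho^{-1}F\mu(k)^{5/6},
\]
so
\[
\mu(h)\le\Big(\frac{C F/\rho}{h-k}\Big)^6\mu(k)^5 .
\]
Since $\beta=5>1$, Lemma 2.2 gives $\mu(h)=0$ for $h\ge k_0+d$. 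Here $d$ is proportional to $\frac{F}{\rho}\mu(k_0)^{(\beta-1)/\alpha}$ with $\mu(k_0)\le |Q_T|$.

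\textbf{Main obstacle.} The difficulty is the specific constant $6/\rho$, independent of $T$. The crude iteration above gives a $d$ that depends on $T$ through $|Q_T|$. To remove this dependence I would localize in time. Instead of the energy inequality on $Q_T$, compare $y$ with the supersolution $w(x)=k_0+\frac{F}{\rho}\,\phi(x)$, where $\phi$ solves a one-dimensional ODE.

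A first attempt is to take $\phi$ from $-\phi''=1$ with $\phi'(0)=\phi'(1)=0$, but this has no solution, so I would use $\phi(x)=\frac{x(1-x)}{2}\cdot c$. The trouble is that $(a(y)w_x)_x$ involves $a(y)$, which is not a constant coefficient. A cleaner route is the Kirchhoff-type substitution $\Phi(y)=\int_0^y a$, under which the equation becomes $y_t-(\Phi(y))_{xx}=f$. One then checks that $(\Phi(y)-\Phi(k_0)-\frac{F}{2}x(1-x)\cdot 3)_+$ stays zero, using the monotonicity of $\Phi$ and $\Phi'\ge\rho$. This converts a bound on $\Phi(y)$ into a bound on $y$ of size $\frac{3F}{2\rho}\cdot\frac14\le \frac{6F}{\rho}$.

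Concretely, I would test the equation with $(\Phi(y)-\Phi(k_0)-\psi(x))_+$, where $\psi=\frac{F}{2}x(1-x)\cdot c$ satisfies $-\psi''\ge F$ and is nonnegative on the boundary. The time-derivative term gives a nonnegative quantity because $\Phi$ is increasing, and the sign of $\psi''$ absorbs $f$. Hence $\Phi(y)\le \Phi(k_0)+\max\psi$, and (\ref{1.3}) gives $y\le k_0+\max\psi/\rho$. The constant $6/\rho$ then leaves ample room.
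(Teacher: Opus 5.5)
Your final argument, the Kirchhoff/comparison step in your last paragraph, is correct and proves more than the statement. It is, however, a genuinely different route from the paper's, and the space-time De Giorgi iteration in your first half can simply be discarded.

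The paper stays with truncation. It tests with $(y-k)_+\chi_{[\xi-\varepsilon,\xi]}$, where $\xi$ maximizes $I_k(t)=\int_0^1(y-k)_+^2dx$, so the time term has a sign. Letting $\varepsilon\to0^+$ gives the purely spatial inequality $\int_0^1|((y-k)_+)_x|^2dx\le\rho^{-1}\int_0^1|f|(y-k)_+dx$ at $t=\xi$. The 1D Sobolev embedding and H\"older then give $(h-k)^2\mu_h\le(F/\rho)^2\mu_k^{(3p-4)/p}$ with $\mu_k=\sup_t|A_k(t)|\le1$. Lemma 2.2 with $p=3$ yields $d\le 2^{5/2}F/\rho<6F/\rho$. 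Measuring superlevel sets slice by slice, so that $\mu_k\le 1$, is exactly what removes the $T$-dependence you ran into.

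Your route instead exploits the structure $a(y)y_x=(\Phi(y))_x$. Set $w=\Phi(y)-\Phi(k_0)-\psi$ with $\psi=\frac{cF}{2}x(1-x)$ and $c\ge1$. Then:
\begin{enumerate}
\item $w\le0$ at $t=0$ and at $x=0,1$, because $k_0\ge0$ and $\psi\ge0$.
\item After one integration by parts in $x$, the flux term equals $\iint|(w_+)_x|^2+cF\iint w_+$, so it absorbs $\iint f w_+$.
\item The time term is bounded below by $\int_0^1 G(x,y(x,t))\,dx\ge0$, where $G(x,s)=\int_{k_0}^s(\Phi(\sigma)-\Phi(k_0)-\psi(x))_+d\sigma$.
\end{enumerate}
Hence $w_+\equiv0$ and $y\le k_0+\psi(x)/\rho$.

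With $c=1$ this gives the constant $1/(8\rho)$ in place of $6/\rho$. That constant is sharp: take $a\equiv\rho$, $y_0=0$, $f\equiv F$ and let $T\to\infty$. Your route also avoids Lemma 2.2 and the limit $\varepsilon\to0^+$ at the single time $\xi$, which tacitly requires $\xi$ to be a Lebesgue point.

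The step you should write out is the time term. For a weak solution $y_t$ is not a function, and your test function is nonlinear in $y$. Use backward Steklov averages together with the convexity inequality $G(x,b)-G(x,a)\le \partial_sG(x,b)\,(b-a)$; this gives the lower bound in exactly the direction you need.

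In return, the paper's argument uses only $a(y)\ge\rho$ as a measurable coefficient. It therefore applies unchanged to $z_t-(b(x,t)z_x)_x=f$ with any measurable $b\ge\rho$, where neither a Kirchhoff transform nor a stationary supersolution is available.
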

{\bf Proof.} \ Denote $\displaystyle\sup_{\varGamma_T} y=l$. For $k>l$ and $0\leq t_1<t_2\leq T$, it is easy to check that $\varphi=(y-k)_+\chi_{[t_1,t_2]}(t)\in \stackrel{\circ}{W}{\hskip-1mm}^{1,0}_2(Q_T)$,
where $\chi_{[t_1,t_2]}(t)$ is the characteristic function of the interval $[t_1,t_2]$. Thus we may choose $\varphi$ as a test function to obtain
\[
\iint_{Q_T}(y-k)_t(y-k)_+\chi_{[t_1,t_2]}dxdt+\iint_{Q_T}\chi_{[t_1,t_2]}a(y)\big|{(y-k)_+}_x\big|^2dxdt=\iint_{Q_T}f(y-k)_+\chi_{[t_1,t_2]}dxdt.
\]
Hence
\[
\frac{1}{2}(I_k(t_2)-I_k(t_1))+\int_{t_1}^{t_2}\hskip-1.5mm\int_0^1a(y)\big|{(y-k)_+}_x\big|^2dxdt\leq\int_{t_1}^{t_2}\hskip-1.5mm\int_0^1|f|(y - k)_+dxdt,
\]
where
\[
I_k(t)=\int_0^1(y(x,t)-k)_+^2dx.
\]

Assume that the absolutely continuous function $I_k(t)$ attains its maximum at $\xi\in[0,T]$. Since $I_k(0)=0$, $I_k(t)\geq 0$, we may suppose $\xi>0$.
Taking $t_1=\xi-\varepsilon$, $t_2=\xi$ with $\varepsilon>0$ small enough so that $\xi-\varepsilon>0$ and noticing that $I_k(\xi)-I_k(\xi-\varepsilon)\geq 0$, we obtain
\begin{equation}
\label{2.30}
\int_{\xi-\varepsilon}^\xi\hskip-0.5mm\int_0^1a(y)\big|{(y-k)_+}_x\big|^2dxdt\leq\int_{\xi-\varepsilon}^\xi\hskip-0.5mm\int_0^1|f|(y-k)_+dxdt,
\end{equation}
and therefore
\[
\frac{1}{\varepsilon}\int_{\xi-\varepsilon}^\xi\hskip-0.5mm\int_0^1\big|{(y-k)_+}_x\big|^2dxdt\leq\frac{1}{\varepsilon\rho}\int_{\xi-\varepsilon}^\xi\hskip-0.5mm\int_0^1|f|(y-k)_+dxdt.
\]
Letting $\varepsilon\to 0^+$, we derive
\[
\int_0^1\big|{(y(x,\xi)-k)_+}_x\big|^2dx\leq\frac{1}{\rho}\int_0^1|f(x,\xi)|(y(x,\xi)-k)_+dx.
\]
For any $p>2$, using the Sobolev embedding theorem gives
\begin{equation}
\label{2.31}
\left(\int_0^1|(y(x,\xi)-k)_+|^pdx\right)^{2/p}\le \frac{1}{\rho}\int_0^1|f(x,\xi)|(y(x,\xi)-k)_+dx.
\end{equation}

Denote
\[
A_k(t)=\{x\in (0,1);\ y(x,t)>k\}, \quad \mu_k=\sup_{0<t<T}|A_k(t)|,
\]
where the symbol $|E|$ represents the Lebesgue measure of the measurable set $E$. An application of H\"{o}lder's inequality to (\ref{2.31}) yields
\begin{equation}\label{2.32}
\left(\int_{A_k(\xi)}(y-k)_+^pdx\right)^{1/p}\leq \frac{1}{\rho}\left(\int_{A_k(\xi)}|f|^q dx\right)^{1/q}\leq \frac{1}{\rho}\|f\|_{L^\infty(Q_T)}|A_k(\xi)|^{1/q}\leq \frac{1}{\rho}\|f\|_{L^\infty(Q_T)}\mu_k^{1/q},
\end{equation}
where $q=\frac{p}{p-1}$.

Utilizing H\"{o}lder's inequality to $I_k(\xi)$ and combining the result with (\ref{2.32}), we are led to
\[
\begin{aligned}
I_k(\xi)\leq\left(\int_{A_k(\xi)}(y(x,\xi)-k)_+^p dx\right)^{2/p}|A_k(\xi)|^{(p-2)/p}\leq\left(\frac{1}{\rho}\|f\|_{L^\infty(Q_T)}\right)^2\mu_k^{(3p-4)/p}.
\end{aligned}
\]
Hence, for any $t\in[0,T]$,
\begin{equation}\label{2.33}
I_k(t)\leq I_k(\xi)\leq(\|f\|_{L^\infty(Q_T)}/\rho)^2\mu_k^{(3p-4)/p}.
\end{equation}
Since for any $h>k$ and $t\in[0,T]$,
\[
I_k(t)\geq\int_{A_h(t)}(y(x,t)-k)_+^2 dx\geq(h-k)^2|A_h(t)|,
\]
from (\ref{2.33}) we obtain
\[
(h-k)^2\mu_h\leq(\|f\|_{L^\infty(Q_T)}/\rho)^2\mu_k^{(3p-4)/p},
\]
i.e.,
\[
\mu_h\leq\left(\frac{\|f\|_{L^\infty(Q_T)}}{\rho(h-k)}\right)^2\mu_k^{(3p-4)/p}.
\]
Using Lemma 2.2 and noticing that $p>2$ implies $(3p-4)/p>1$, we finally arrive at
\[
\mu_{l+d}=\sup_{0<t<T}|A_{l+d}(t)|=0,
\]
where
\[
\begin{aligned}
d&=\|f\|_{L^\infty(Q_T)}\rho^{-1}\mu_l^{1-2/p}2^{(3p-4)/(2p-4)}\leq 2^{(3p-4)/(2p-4)}\rho^{-1}\|f\|_{L^\infty(Q_T)}.
\end{aligned}
\]
This means, by the definition of $A(k)$,
\[
y\leq l+2^{(3p-4)/(2p-4)}\rho^{-1}\|f\|_{L^\infty(Q_T)} \qquad \text{a.e. in}\ Q_T.
\]
Take $p=3$, to conclude (\ref{2.28}).
\vskip1mm

Set $z=-y$. Then
\begin{equation}\nonumber
\left\{\begin{array}{ll}
z_t-(a(y)z_x)_x=-f, &(x,t)\in Q_T,
\\[2mm]
z(0,t)=z(1,t)=0, &t\in (0,T),
\\[2mm]
z(x,0)=-y_0(x), &x \in (0,1).
\end{array}\right.
\end{equation}
A proof analogous to that of inequality (\ref{2.28}) yields the estimate
$$
\sup_{Q_T}(-y)=\sup_{Q_T}z\leq\sup_{[0,1]}(-y_0)+\frac{6}{\rho}\|f\|_{L^\infty(Q_T)},
$$
and hence
$$
\inf_{Q_T}y=-\sup_{Q_T}(-y)\geq-\sup_{[0,1]}(-y_0)-\frac{6}{\rho}\|f\|_{L^\infty(Q_T)}=\inf_{[0,1]}y_0-\frac{6}{\rho}\|f\|_{L^\infty(Q_T)},
$$
this proves (\ref{2.29}), and so finish the proof. \hfill $\Box$

\vskip2mm

Combining (\ref{2.28}) and (\ref{2.29}), we immediately obtain

\begin{corollary}\label{Coro.2.2}
Let $y_0\in H^1_0(0,1)$, $f\in L^\infty(Q_T)$, and let $y\in L^\infty(Q_T)\cap\stackrel{\circ}{V}{\hskip-1mm}^{1,0}_2(Q_T)$ be a weak solution to the problem (\ref{2.27}). Then
\begin{equation}\label{2.34}
\sup_{Q_T}|y|\leq\sup_{[0,1]}|y_0|+\frac{6}{\rho}\|f\|_{L^\infty(Q_T)}.
\end{equation}
\end{corollary}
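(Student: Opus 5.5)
The plan is to combine the one-sided bounds (\ref{2.28}) and (\ref{2.29}) of Proposition 2.4 and turn them into a bound on $|y|$. No new estimate is needed.

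First, I will control the positive part of $y$. By (\ref{2.28}),
$$
\sup_{Q_T}y\le \sup_{[0,1]}y_0+\frac{6}{\rho}\|f\|_{L^\infty(Q_T)}\le \sup_{[0,1]}|y_0|+\frac{6}{\rho}\|f\|_{L^\infty(Q_T)},
$$
since $y_0\le|y_0|$ pointwise.

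Second, I will control the negative part. From (\ref{2.29}),
$$
-\inf_{Q_T}y\le -\inf_{[0,1]}y_0+\frac{6}{\rho}\|f\|_{L^\infty(Q_T)}=\sup_{[0,1]}(-y_0)+\frac{6}{\rho}\|f\|_{L^\infty(Q_T)}\le \sup_{[0,1]}|y_0|+\frac{6}{\rho}\|f\|_{L^\infty(Q_T)}.
$$

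Finally, I will use the identity $\sup_{Q_T}|y|=\max\{\sup_{Q_T}y,\,-\inf_{Q_T}y\}$, understood in the essential sense consistent with the definitions of $\sup$ given before Lemma 2.1. This identity holds because $(|y|-l)_+=0$ a.e. if and only if both $(y-l)_+=0$ a.e. and $(-y-l)_+=0$ a.e. Taking the maximum of the two bounds above then yields (\ref{2.34}).

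Since $y_0\in H^1_0(0,1)\subset C([0,1])$, the quantity $\sup_{[0,1]}|y_0|$ is finite and equals the maximum of $|y_0|$. The only point that needs any care is this identity for essential suprema, and it is immediate from the definition, so there is no real obstacle.
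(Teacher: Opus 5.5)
Your proposal is correct and is exactly the paper's argument: the paper obtains (\ref{2.34}) by simply combining the one-sided bounds (\ref{2.28}) and (\ref{2.29}) of Proposition 2.4, with no further work. Your check that $\sup_{Q_T}|y|=\max\{\sup_{Q_T}y,\,-\inf_{Q_T}y\}$ holds for essential suprema fills in the one step the paper leaves implicit.
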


\section{$L^\infty$-cost estimate of control functions}
Let $A(s)=\int_0^s a(\tau)d\tau$. Then equations (\ref{1.4}) and (\ref{1.5}) are essentially the same. Now we recall the local null controllability result of
the additive control system (\ref{1.5}). In \cite{Be} and \cite{FNNV}, the authors show that for any $T>0$, there exists $\eta>0$ such that, for every $y_0\in H^1_0(0,1)$ with $\|y_0\|_{H^1(0,1)}\leq\eta$,
there exists a control $u\in L^2(Q_T)$ for which the associated state $y$ of (\ref{1.5}) satisfies $y(x,T)=0$ in $(0,1)$. Further, the control function $u$ obtained
from \cite{Be} is in $H^1([0,T]; L^2(0,1))$ and admits the estimate $\iint_{Q_T}(u^2+u_t^2)\leq C\|y_0\|^2_{L^2(0,1)}$.

However, to prove the null controllability of the multiplicative control system for the quasilinear parabolic equation (\ref{1.1}), this control cost estimate is insufficient.
We need an $L^\infty$-cost estimate for the control function when the additive control system (\ref{1.5}) achieves null controllability, which in turn allows us to obtain the
maximum modulus estimate for the state. Therefore, we provide a concise re-proof of the local null controllability for system (\ref{1.5}), with the primary aim of obtaining
an estimate for the maximum modulus of the control function. In our proof, we adopt the framework provided in \cite{Be} (which is also the technique and framework in \cite{Barbu}),
and employ an important estimate concerning the linearized system from \cite{Be}.

A fundamental step towards proving the local null controllability of the quasilinear parabolic system (\ref{1.5}) involves establishing some controllability results for its associated linearized system
\begin{equation}\label{3.1}
\left\{\begin{array}{ll}
y_{t}-(a(\tilde{y})y_x)_{x}=1_\omega u, &(x,t)\in Q_T,
\\[2mm]
y(0,t)=y(1,t)=0, &t\in(0,T),
\\[2mm]
y(x,0)=y_0(x), &x\in(0,1),
\end{array}\right.
\end{equation}
where $\tilde{y}$ is some function with $\tilde{y}_x, \sqrt{t}\tilde{y}_t\in L^\infty(Q_T)$, $\tilde{y}_{xt}\in L^2(Q_T)$, and $\tilde{y}(0,t)=\tilde{y}(1,t)=0$. For notational convenience,
from now on, $a(\tilde{y})$ will be denoted by $b$.

The following technical result, due to Fursikov and Imanuvilov \cite{FI}, is fundamental.
\begin{lemma}\label{Lemma 3.1}
There exists a function $\psi\in C^2([0,1])$ such that
$$
\psi(x)>0\ \ \mbox{in}\ (0,1), \ \ \psi(0)=\psi(1)=0, \ \ |\psi'(x)|>0\ \ \mbox{in}\ [0,1]\backslash\omega_0,
$$
where $\omega_0$ is a nonempty open set such that $\overline{\omega}_0\subset (0,1)$.
\end{lemma}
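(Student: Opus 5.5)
The plan is to give an explicit construction. The problem is one-dimensional, so the critical-point structure of $\psi$ can be fully controlled: it suffices to build a $C^2$ function that vanishes at the endpoints, is positive inside, and has exactly one critical point, located in $\omega_0$.

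\emph{Step 1: reduction.} Since $\omega_0$ is a nonempty open set with $\overline{\omega}_0\subset(0,1)$ (in applications we choose $\omega_0$ with $\overline\omega_0\subset\omega$), fix a point $c\in\omega_0$. It suffices to construct $\psi\in C^2([0,1])$ with $\psi(0)=\psi(1)=0$, $\psi>0$ in $(0,1)$, and $\psi'(x)\neq0$ for all $x\neq c$. Then $|\psi'|>0$ on $[0,1]\setminus\{c\}\supset[0,1]\setminus\omega_0$. Such a $\psi$ is strictly increasing on $[0,c]$ and strictly decreasing on $[c,1]$, which also gives positivity in $(0,1)$ automatically.

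\emph{Step 2: the model function and a change of variables.} The function $q(s)=s(1-s)$ has $q(0)=q(1)=0$, $q>0$ on $(0,1)$, and its only critical point is $s=1/2$. Let $\phi:[0,1]\to[0,1]$ be a $C^\infty$ increasing diffeomorphism with $\phi'>0$ on $[0,1]$ and $\phi(c)=1/2$, and set
\begin{equation*}
\psi(x)=\phi(x)\bigl(1-\phi(x)\bigr),\qquad \psi'(x)=\phi'(x)\bigl(1-2\phi(x)\bigr).
\end{equation*}
Then $\psi\in C^2([0,1])$ and $\psi(0)=\psi(1)=0$. Since $\phi'>0$ and $\phi$ is injective, $\psi'(x)=0$ holds only when $\phi(x)=1/2$, that is, only at $x=c$. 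Moreover $\phi(x)\in(0,1)$ for $x\in(0,1)$, so $\psi>0$ there.

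\emph{Step 3: constructing $\phi$.} This is the only step needing care. For $\gamma\neq0$ take
\begin{equation*}
\phi_\gamma(x)=\frac{e^{\gamma x}-1}{e^{\gamma}-1},
\end{equation*}
and set $\phi_0(x)=x$. Each $\phi_\gamma$ is smooth with $\phi_\gamma(0)=0$, $\phi_\gamma(1)=1$ and $\phi_\gamma'>0$ on $[0,1]$. We need $\phi_\gamma(c)=1/2$ for some $\gamma$. The map $\gamma\mapsto\phi_\gamma(c)$ is continuous on $\mathbb R$. It tends to $0$ as $\gamma\to+\infty$, since it behaves like $e^{-\gamma(1-c)}$. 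It tends to $1$ as $\gamma\to-\infty$, since the numerator tends to $-1$ and the denominator to $-1$, and more precisely $1-\phi_\gamma(c)=\frac{e^{\gamma}-e^{\gamma c}}{e^{\gamma}-1}\to0$. By the intermediate value theorem, some $\gamma$ gives $\phi_\gamma(c)=1/2$.

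With this $\gamma$, the function $\psi=\phi_\gamma(1-\phi_\gamma)$ has all the required properties. There is no genuine obstacle beyond this one-parameter intermediate-value argument; any other smooth increasing diffeomorphism sending $c$ to $1/2$, for example a mollified piecewise-linear map, would work equally well.
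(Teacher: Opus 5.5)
Your proof is correct, and it takes a genuinely different route from the paper, which does not prove the lemma at all.

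\textbf{Your construction.} The three properties of the one-parameter family all hold. The map $\gamma\mapsto\phi_\gamma(c)$ is continuous, including at $\gamma=0$ where it equals $c$. It tends to $0$ as $\gamma\to+\infty$ and to $1$ as $\gamma\to-\infty$, and $\phi_\gamma'>0$ for every $\gamma$. So the intermediate value theorem gives $\phi_\gamma(c)=1/2$. Then $\psi=\phi_\gamma(1-\phi_\gamma)$ is $C^\infty$, vanishes at $0$ and $1$, is positive in $(0,1)$, and has its only critical point at $c\in\omega_0$.

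\textbf{The paper's route.} The paper simply quotes the lemma from Fursikov and Imanuvilov. There it is proved for an arbitrary bounded smooth domain in $\mathbb{R}^n$: one starts from a function vanishing on the boundary with finitely many nondegenerate critical points, then moves those points into $\omega_0$ by a diffeomorphism of $\overline{\Omega}$.

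\textbf{Comparison.} Your argument uses the one-dimensional setting to make this elementary. You get an explicit, self-contained, smooth weight with exactly one critical point at a prescribed location. It also works for every admissible $\omega_0$, which is the reading the paper needs, since it later fixes $\overline{\omega}_0\subset\omega$. What the citation buys instead is dimension-independence. That is the form required to extend the Carleman estimate of Proposition 3.1 to the multidimensional problems discussed in the introduction. For the 1D statement here, your proof is complete and simpler.
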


In the text below, we will use a fixed $\psi$ and a fixed $\overline{\omega}_0\subset\omega$. For any $\lambda>0$, we define
\begin{equation}\label{3.2}
\beta(x,t)=\frac{e^{\lambda\psi(x)}-e^{2\lambda\|\psi\|_{C([0,1])}}}{t(T-t)}, \ \ \phi(x,t)=\frac{e^{\lambda\psi(x)}}{t(T-t)}.
\end{equation}

Consider the adjoint system of (\ref{3.1}) in the form
\begin{equation}\label{3.3}
\left\{\begin{array}{ll}
p_{t}+(b p_x)_{x}=h, &(x,t)\in Q_T,
\\[2mm]
p(0,t)=p(1,t)=0, &t\in(0,T),
\\[2mm]
p(x,T)=p_T(x), &x\in(0,1),
\end{array}\right.
\end{equation}
where $h\in L^2(Q_T)$ and $p_T\in L^2(0,1)$.

The following Carleman inequality concerning (\ref{3.3}) was proved in \cite{Be} and \cite{FI}.
\begin{proposition}\label{Proposition 3.1}
For any solution $p$ of the adjoint system (\ref{3.3}), with $\|b_x\|_{L^\infty(Q_T)}\leq\delta$, $\|\sqrt{t}b_t\|_{L^\infty(Q_T)}$ $\leq\delta$ and for every $\lambda\geq\lambda_0(\delta)$,
$s\geq s_0(\lambda)$, it can be concluded that
\begin{align}
\begin{aligned}
&\iint_{Q_T} e^{2s\beta}\left[s^3 \phi^3 p^2+s\phi p_x^2+s^{-1}\phi^{-1}(p_t^2+p_{xx}^2)\right]dxdt
\\[1mm]
&\qquad\qquad\leq C(\lambda,\delta)\left(\int_0^T\hskip-1.5mm\int_{\omega}e^{2s\beta}s^3\phi^3 p^2dxdt+\iint_{Q_T} e^{2s\beta}h^2dxdt\right),
\end{aligned}
\label{3.4}
\end{align}
where $C(\lambda,\delta)$ is a constant independent of $p, h$, and $s$, but which may depend on $\psi$, $\lambda$, $\delta$, and $\omega$.
\end{proposition}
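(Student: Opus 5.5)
The plan is to follow the classical Fursikov--Imanuvilov route: start from the weighted identity for the conjugated variable and then absorb the terms generated by the variable coefficient $b$ using the bounds on $b_x$ and $\sqrt{t}\,b_t$.

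\emph{Conjugation and splitting.} Set $w=e^{s\beta}p$, which vanishes at $t=0,T$ together with its derivatives, because $\beta\to-\infty$ there. Rewrite $p_t+(bp_x)_x=h$ as $P_1w+P_2w=e^{s\beta}h+R$. Here $P_1w=w_t+2s b\beta_x w_x+sb_x\beta_x w$ collects the terms that are antisymmetric in the leading order, while $P_2w=(bw_x)_x+s^2b\beta_x^2w-s\beta_t w$ collects the symmetric ones. $R$ contains lower-order terms such as $s(b\beta_x)_x w$. Squaring gives
\[
\|P_1w\|^2+\|P_2w\|^2+2(P_1w,P_2w)\le 2\|e^{s\beta}h\|^2+2\|R\|^2 .
\]

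\emph{Cross term.} I will expand the cross product $(P_1w,P_2w)$ by integration by parts. The boundary terms at $x=0,1$ reduce to $\pm s b^2\beta_x w_x^2$. Since $\psi'(0)>0>\psi'(1)$ (we may take $\psi'\neq0$ at the endpoints, with $\psi>0$ inside), these boundary terms have the good sign and can be dropped. The dominant interior contributions are
\[
s^3\lambda^4\phi^3\psi'^4 b^3 w^2 \quad\text{and}\quad s\lambda^2\phi\psi'^2 b^2 w_x^2 ,
\]
both coming from derivatives of $e^{\lambda\psi}$ hitting $\beta_x$. The error terms are of the following kinds:
\begin{itemize}
\item terms containing $b_x$; these are bounded by $\delta$ times lower powers of $\lambda$;
\item terms containing $b_t$, such as $s^2 b_t\beta_x^2 w^2$; using $|b_t|\le\delta t^{-1/2}\le C\delta T^{1/2}\phi^{1/2}$, these are dominated by $s^3\phi^3$ for $s$ large;
\item terms containing $\beta_t$ or $\beta_{tt}$, which are bounded by $T\phi^2$ or $\phi^3$ with one power of $s$ fewer.
\end{itemize}
Choosing $\lambda\ge\lambda_0(\delta)$ and then $s\ge s_0(\lambda)$ absorbs all of these. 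The leading terms lose positivity only where $\psi'=0$, that is in $\omega_0$. I therefore add and subtract the integrals over $\omega_0$, which leaves an $\omega_0$-integral of $s^3\phi^3w^2+s\phi w_x^2$ on the right-hand side.

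\emph{Removing the local gradient term.} To eliminate the local term $\int_{\omega_0}s\phi e^{2s\beta}p_x^2$, I multiply the equation by $\chi s\phi e^{2s\beta}p$, where $\chi$ is a cutoff supported in $\omega$ with $\chi=1$ on $\omega_0$, and integrate by parts. This bounds the term by
\[
\int_0^T\!\!\int_\omega s^3\phi^3e^{2s\beta}p^2+\varepsilon\times(\text{left-hand side})+C\iint e^{2s\beta}h^2 .
\]
The $p_t^2$ and $p_{xx}^2$ terms with weight $s^{-1}\phi^{-1}$ are recovered afterwards from $\|P_1w\|^2$ and $\|P_2w\|^2$, or directly from the equation, which gives $p_t=h-bp_{xx}-b_xp_x$ and $p_{xx}$ in terms of $P_2w$. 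Returning from $w$ to $p$ produces commutator terms that are again absorbed for $s$ large.

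\emph{Main obstacle.} The step I expect to be delicate is controlling the $b_t$ contribution in the cross term. Only $\sqrt t\,b_t$ is bounded, so I must check that $t^{-1/2}$ is compensated by the surplus factor $\phi^{1/2}\ge c\,t^{-1/2}$ available against $s^3\phi^3$. This must be done symmetrically near $t=T$, where the bound on $b_t$ gives no singularity and the point is simply that $\phi$ is large there. The constants must depend on $\delta$ only through $\lambda_0$ and $C(\lambda,\delta)$.
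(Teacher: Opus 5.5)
The paper does not prove this proposition; it imports it from [Be] and [FI]. Your outline is the Fursikov--Imanuvilov conjugation argument used in those works. You also pay for the singularity of $b_t$ correctly, via $t^{-1/2}\le T^{1/2}\phi^{1/2}$. The same bound is needed for the term $\tfrac12\iint_{Q_T} b_t w_x^2$ coming from $(w_t,(bw_x)_x)$, which you did not list; it is absorbed against $s\phi w_x^2$. Two details, however, must be repaired before the computation closes.

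\textbf{The sign of the drift.} For $w=e^{s\beta}p$ one has $e^{s\beta}\bigl(p_t+(bp_x)_x\bigr)=w_t+(bw_x)_x-2sb\beta_xw_x+s^2b\beta_x^2w-s\beta_tw-s(b\beta_x)_xw$. With the drift $+2sb\beta_xw_x$ that you wrote, three contributions come out nonpositive: the endpoint term $s[b^2\beta_xw_x^2]_0^1$, and the leading terms $s\lambda^2\phi\psi'^2b^2w_x^2$ and $s^3\lambda^4\phi^3\psi'^4b^2w^2$. Your sign claims are correct only for $-2sb\beta_xw_x$.

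\textbf{No second derivative of $b$.} This matters more under these weak hypotheses. Only $b_x$ and $\sqrt t\,b_t$ are controlled; in the application $b=a(\tilde y)$ with $\tilde y\in K$, and $\tilde y_{xx}$ is not bounded. So no term that already contains $b_x$ may be differentiated again. If $sb_x\beta_xw$ sits in $P_1$ and you integrate $(sb_x\beta_xw,(bw_x)_x)$ by parts, you get $-s\iint_{Q_T}b\,b_{xx}\beta_x\,w\,w_x$, which cannot be estimated. The fix is to take $P_1w=w_t-2sb\beta_xw_x$ and put the whole of $-s(b\beta_x)_xw$ into $R$. (Alternatively, bound that pairing by Cauchy--Schwarz against $\|P_2w\|$ without integrating by parts.) Then $\|R\|^2\le Cs^2(\lambda^4+\delta^2\lambda^2)\iint_{Q_T}\phi^2w^2$. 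After adding the $\omega_0$ term, this is absorbed by $s^3\lambda^4\iint_{Q_T}\phi^3w^2$ once $s\ge CT^2$.

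With these two changes, the rest of your plan goes through and gives (3.4). This includes the cutoff multiplier $\chi s\phi e^{2s\beta}p$ for the local gradient term and the recovery of $p_t,p_{xx}$ from $\|P_1w\|$ and $\|P_2w\|$.
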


Proceeding as in \cite{Barbu} (cf. Corollary 1.2.1, pp. 145), one may obtain the following observability inequality.
\begin{corollary}\label{Corollary 3.1}
Under the assumptions of Proposition 3.1, we can assert that
\begin{equation}\label{3.5}
\int_0^1p^2(x,0)dx\le C(s, \lambda, \delta)\left(\int_0^T\hskip-1.5mm\int_{\omega}e^{2s\beta}\phi^3 p^2dxdt+\iint_{Q_T} e^{2s\beta}h^2dxdt\right),
\end{equation}
where $C(s, \lambda,\delta)$ is a constant that does not depend on $p$ or $h$.
\end{corollary}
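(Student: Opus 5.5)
The plan is the standard two-step route: a backward energy estimate for (\ref{3.3}) moves $p(\cdot,0)$ to a time slab in the interior of $(0,T)$, and the Carleman inequality (\ref{3.4}) is then used on that slab, where the weights are harmless. I will also show that the $h$-term, with the weight exactly as written, cannot be closed in general.

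\emph{Step 1: energy estimate.} Multiply (\ref{3.3}) by $p$ and integrate over $(0,1)$. With $b=a(\tilde y)\ge\rho$ this gives
$$-\frac12\frac{d}{dt}\int_0^1p^2dx+\rho\int_0^1p_x^2dx\le \frac12\int_0^1p^2dx+\frac12\int_0^1h^2dx .$$
Gronwall backward from $t\in(T/4,3T/4)$ gives $\int_0^1p^2(x,0)dx\le e^{T}\big(\int_0^1p^2(x,t)dx+\int_0^t\int_0^1h^2\big)$. Averaging in $t$ over $(T/4,3T/4)$ then yields
\[
\int_0^1p^2(x,0)dx\le C\Big(\int_{T/4}^{3T/4}\hskip-1.5mm\int_0^1p^2dxdt+\int_0^{3T/4}\hskip-1.5mm\int_0^1h^2dxdt\Big).
\]

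\emph{Step 2: Carleman on the slab.} On $[T/4,3T/4]\times[0,1]$ both $\phi$ and $e^{2s\beta}$ are bounded below by positive constants depending on $s,\lambda,T$. Hence the slab integral of $p^2$ is controlled by the left-hand side of (\ref{3.4}), and therefore by $C(\lambda,\delta)\big(\int_0^T\int_\omega e^{2s\beta}s^3\phi^3p^2+\iint_{Q_T}e^{2s\beta}h^2\big)$. The part of the source integral with $t\in[T/4,3T/4]$ is handled the same way, since there $1\le C(s,\lambda)e^{2s\beta}$. Absorbing $s^3$ into the constant gives exactly the two terms on the right of (\ref{3.5}).

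\emph{Step 3: the main obstacle.} What remains is $\int_0^{T/4}\int_0^1h^2$, and it cannot be bounded by $\iint e^{2s\beta}h^2$, because $e^{2s\beta}\to0$ as $t\to0^+$. In fact the inequality as worded fails for general $h$. Take $p_T=0$ and $h=1_{(0,\varepsilon)}(t)\sin(\pi x)$. Then $\int_0^1p^2(x,0)\,dx\sim c\,\varepsilon^2$. On the other hand, $p$ is supported in $t\le\varepsilon$, so both terms on the right of (\ref{3.5}) are $O(e^{-c's/\varepsilon})$. Letting $\varepsilon\to0$ contradicts any fixed constant $C(s,\lambda,\delta)$.

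So I would prove (\ref{3.5}) exactly as stated under the additional proviso that $h$ vanishes for $t<T/4$, or more generally whenever the $h$-term in Step 1 only involves $t\ge T/4$. In particular this covers $h=0$, which is the case needed for the HUM/penalization construction of the control for (\ref{3.1}), as in \cite{Barbu}. If a general $h$ is needed later, I would replace the weight in the source term by one that is frozen near $t=0$, namely $\beta(x,\max\{t,T/2\})$. The Steps 1--2 argument goes through verbatim with that weight, since it is bounded below on $(0,T/2]$.
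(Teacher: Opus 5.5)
Your Steps 1--2 are the standard argument the paper invokes through its one-line reference to Barbu's Corollary 1.2.1. A backward energy estimate carries $\|p(\cdot,0)\|_{L^2(0,1)}$ to the slab $(T/4,3T/4)$, where $e^{2s\beta}s^3\phi^3$ is bounded below, and \eqref{3.4} then closes the estimate. The paper gives no further detail.

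Where you depart from the paper is Step 3, and you are right. The paper asserts \eqref{3.5} for arbitrary $h\in L^2(Q_T)$ but never deals with $\int_0^{T/4}\!\int_0^1 h^2\,dx\,dt$, which cannot be bounded by $\iint_{Q_T}e^{2s\beta}h^2\,dx\,dt$.

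Your counterexample is valid. Take $b$ constant, which is admissible under the hypotheses of Proposition 3.1. Then $p=c(t)\sin(\pi x)$ with $c'-b\pi^2c=1_{(0,\varepsilon)}$ and $c\equiv 0$ on $[\varepsilon,T]$. Hence $c(0)=-(1-e^{-b\pi^2\varepsilon})/(b\pi^2)\approx-\varepsilon$ and $|c|\le\varepsilon$. On $(0,\varepsilon)$ the numerator of $\beta$ is at most $e^{\lambda\|\psi\|_{C([0,1])}}-e^{2\lambda\|\psi\|_{C([0,1])}}<0$. So $e^{2s\beta}$ and $e^{2s\beta}\phi^3$ are both $O(e^{-c/\varepsilon})$ there. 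The right-hand side of \eqref{3.5} is therefore exponentially small, while the left-hand side is of order $\varepsilon^2$.

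So the corollary as literally stated is false, and what you prove is the correct replacement:
\begin{itemize}
\item the case $h=0$ on $(0,T/4)$;
\item the version with the frozen weight $\tilde\beta(x,t)=\beta(x,\max\{t,T/2\})$.
\end{itemize}
For the second version, the only point beyond ``verbatim'' is that $\beta\le\tilde\beta$. This lets the Carleman source term $\iint_{Q_T} e^{2s\beta}h^2$ be dominated by $\iint_{Q_T} e^{2s\tilde\beta}h^2$.

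The paper loses nothing from this. Corollary 3.1 is only invoked for $p_\varepsilon$ solving \eqref{3.10}, i.e.\ with $h=0$, to obtain \eqref{3.11}.
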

\vskip2mm

Define
\begin{equation}\label{3.6}
K=\{y; \ \|y_x\|_{L^\infty(Q_T)}\leq\delta,\ \|\sqrt{t}y_t\|_{L^\infty(Q_T)}\leq\delta, \ \|y_{xt}\|_{L^2(Q_T)}\le \delta, \ y(\cdot,0)=y_0\}.
\end{equation}
Note that $K$ is compact in the $L^2(Q_T)$ topology. Indeed, it can be easily seen that $K$ is closed, and the fact $\|y\|_{H^{1}(Q_T)}$ is bounded insures its precompactness.

Consider the linearized equation (\ref{3.1}) for $\tilde{y}\in K$. By the definition of $K$, for all $\tilde{y}$, we have a fixed $\delta$ (which does not depend on $\tilde{y}$) in the Carleman inequality (\ref{3.4}).
By taking $\delta>0$ sufficiently small, we can invoke an important estimate for the solution to (\ref{3.1}), as established in \cite{Be} (see the inequality (3.34) in \cite{Be}),
\begin{equation}\label{3.7}
\begin{split}
&\|y_x\|_{L^\infty(Q_T)}^2+\|\sqrt{t}y_t\|_{L^\infty(Q_T)}^2+\|y_{xt}\|_{L^2(Q_T)}^2
\\[2mm]
&\quad\leq C\left(1+\|\sqrt{t}\tilde{y}_t\|_{L^\infty(Q_T)}^2\right)\left(1+\|\tilde{y}_x\|_{L^\infty(Q_T)}^4+\|\tilde{y}_{xt}\|_{L^2(Q_T)}^4\right)
\\[2mm]
&\qquad\cdot\left(\int_0^1\big((b y_x)_x^2(x,0)+(y_0)_x^2\big)dx+\iint_{Q_T}\big(u_t^2+u^2\big)dxdt\right)
\end{split}
\end{equation}
for any $\tilde{y}\in K$.

For brevity, in the following we will denote any constant which does not depend on $p, y, \tilde{y}, h$, and $u$ by $C$ (such a constant may, however, depend on $s, \lambda, \delta$).

For any $\varepsilon>0$, we construct the following optimal control
problem, which provides an approximate null-control for (\ref{3.1}):
\begin{equation}\label{3.8}
\mbox{Minimize}\ \left\{\iint_{Q_T}e^{-2s\beta}\phi^{-3}u^2dxdt+\frac{1}{\varepsilon}\int_0^1y^2(x,T)dx; \ u\in L^2(Q_T)\right\}\ \mbox{subject to}\ (\ref{3.1}).
\end{equation}

Applying the Pontryagin maximum principle, we find that this problem admits a unique solution $(y_\varepsilon, u_\varepsilon)$, and it follows that
\begin{equation}\label{3.9}
u_\varepsilon=1_\omega e^{2s\beta}\phi^3p_\varepsilon,
\end{equation}
where $p_\varepsilon$ is a solution of the dual system
\begin{equation}\label{3.10}
\left\{\begin{array}{ll}
(p_\varepsilon)_{t}+(b (p_\varepsilon)_x)_{x}=0, &(x,t)\in Q_T,
\\[2mm]
p_\varepsilon(0,t)=p_\varepsilon(1,t)=0, &t\in(0,T),
\\[2mm]
p_\varepsilon(x,T)=-\displaystyle\frac{1}{\varepsilon}y_\varepsilon(T,x), &x\in(0,1).
\end{array}\right.
\end{equation}

Multiplying (\ref{3.10}) by $y_\varepsilon$ and (\ref{3.1}) (where $y=y_\varepsilon$ and $u=u_\varepsilon$) by $p_\varepsilon$, adding the two equations together and integrating on $Q_T$,
it follows from (\ref{3.9}) and Corollary 3.1 that
\begin{align}
\begin{aligned}
&\int_{0}^T\hskip-1.5mm\int_\omega e^{2s\beta}\phi^3p_\varepsilon^2dxdt+\frac{1}{\varepsilon}\int_0^1y_\varepsilon^2(T,x)dx
\\[2mm]
&\qquad=-\int_0^1y_0 p_\varepsilon(x,0)dx\le \|y_0\|_{L^2(0,1)}\|p_\varepsilon(\cdot, 0)\|_{L^2(0,1)},
\end{aligned}
\nonumber
\end{align}
which implies that
\begin{equation}
\label{3.11}
\int_{0}^T\hskip-1.5mm\int_\omega e^{2s\beta}\phi^3p_\varepsilon^2dxdt+\frac{1}{\varepsilon}\int_0^1y_\varepsilon^2(T,x)dx
\le C\int_0^1y_0^2dx.
\end{equation}
By (\ref{3.9}), (\ref{3.11}) and Proposition 3.1, we derive
\begin{equation}\label{3.12}
\iint_{Q_T}\big(u_\varepsilon^2+(u_\varepsilon)_t^2\big)dxdt\le C_1\int_0^1y_0^2dx,
\end{equation}
where $C_1$ is a constant depending on $s,\lambda, \delta$ and $\omega$.

Further, we estimate the maximum modulus of $u_\varepsilon$. The method borrows from the iteration method used in \cite{Barbu} in order to get an $L^\infty$-control
function for the semilinear heat equations, which is also employed in \cite{LZ} to obtain an estimate for the control function in certain H\"{o}lder space. Write
\[
\phi_0(t)=\frac{1}{t(T-t)}, \ \beta_0(t)=\big(1-e^{2\lambda\|\psi\|_{C([0,1])}}\big)\phi_0(t),\ v_\varepsilon=e^{(s+\delta_0)\beta_0(t)}\phi_0^3(t)p_\varepsilon,
\]
where $0<\delta_0<s/2$. Then it is easy to check that $v_\varepsilon$ satisfies
\begin{equation}\label{3.13}
\left\{\begin{array}{ll}
(v_\varepsilon)_t+(b(v_\varepsilon)_x)_x=h_\varepsilon, &(x,t)\in Q_T,
\\[2mm]
v_\varepsilon(0,t)=v_\varepsilon(1,t)=0, &t\in (0,T),
\\[2mm]
v_\varepsilon(x,0)=v_\varepsilon(x,T)=0, &x\in (0,1),
\end{array}\right.
\end{equation}
where $h_\varepsilon=p_\varepsilon\big(e^{(s+\delta_0)\beta_0}\phi_0^3\big)_t$.

A simple calculation shows that
\begin{equation}\label{3.14}
\left\{\begin{array}{ll}
\big(e^{(s+\delta_0)\beta_0}\phi_0^3\big)_t=e^{(s+\delta_0)\beta_0}\big[(s+\delta_0)\phi_0^3 \beta_0^{\prime}+3\phi_0^2\phi_0^{\prime}\big],
\\[3mm]
\big|\big(e^{(s+\delta_0)\beta_0}\phi_0^3\big)_t\big|^2\le e^{2(s+\delta_0)\beta_0}\big[2(s+\delta_0)^2 \phi_0^6 (\beta_0^{\prime})^2+18\phi_0^4(\phi_0^{\prime})^2\big],
\\[3mm]
\phi_0^{\prime}=\displaystyle\frac{2t-T}{t^2(T-t)^2}, \ ({\phi_0}')^2\le T^2\phi_0^4,
\\[3mm]
\beta_0\leq\beta\leq\displaystyle\frac{e^{\lambda\|\psi\|_{C([0,1])}}}{e^{\lambda\|\psi\|_{C([0,1])}}+1} \beta_0<0,\ ({\beta_0}')^2\le CT^2\phi_0^4,
\\[3mm]
0<\phi_0 \leq\phi\leq e^{\lambda\|\psi\|_{C([0,1])}}\phi_0,
\\[3mm]
e^{s\beta}\phi_0^{k}\le C<\infty \ \mbox{for all}\ k\in \mathbb R.
\end{array}\right.
\end{equation}
Thus
\begin{equation}\label{3.15}
\begin{split}
\|h_\varepsilon\|_{L^2(Q_T)}^2&\leq C\iint_{Q_T}e^{2(s+\delta_0)\beta_0}s^2\phi_0^{10}p_\varepsilon^2dxdt
\\[1mm]
&\leq C\iint_{Q_T}e^{2s\beta}s^3\phi^3(e^{2\delta_0\beta_0}\phi_0^{7})p_\varepsilon^2dxdt
\\[1mm]
&\leq C\iint_{Q_T}e^{2s\beta}s^3\phi^3p_\varepsilon^2dxdt.
\end{split}
\end{equation}
By Proposition 3.1, combining (\ref{3.11}) and (\ref{3.15}), we get
\begin{equation}\label{3.16}
\|h_\varepsilon\|_{L^2(Q_T)}^2\leq C\int_0^T\hskip-1mm\int_\omega e^{2s\beta}s^3\phi^3 p_\varepsilon^2dxdt\le C\|y_0\|_{L^2(0,1)}^2.
\end{equation}
According to the $L^p$-theory of the linear parabolic equations
(see Theorem 7.17 in \cite{Lie} page 176), we find that the solution of (\ref{3.13}) satisfies
\begin{equation}\nonumber
\|v_\varepsilon\|_{W^{2,1}_2(Q_T)}\le C\|h_\varepsilon\|_{L^2(Q_T)}\le C\|y_0\|_{L^2(0,1)}.
\end{equation}
Using the continuous embedding in Sobolev space $W^{2,1}_2(Q_T)$, we see that
\begin{equation}\label{3.17}
\|v_\varepsilon\|_{L^\infty(Q_T)}\le \|v_\varepsilon\|_{C^{1/2,1/4}(\overline{Q}_T)}\le C\|v_\varepsilon\|_{W^{2,1}_2(Q_T)}\le C\|y_0\|_{L^2(0,1)}.
\end{equation}
Note that
$$
u_\varepsilon=1_\omega e^{2s\beta}\phi^3p_\varepsilon=1_\omega e^{2s\beta}\phi^3e^{-(s+\delta_0)\beta_0}\phi_0^{-3}v_\varepsilon,
$$
and $e^{2s\beta}e^{-(s+\delta_0)\beta_0}\phi^3\phi_0^{-3}\le C$ (here we take $0<\delta_0<s/2$, and $\lambda\|\psi\|_{C([0,1])}\ge 2$). By (\ref{3.17}), we conclude that
\begin{equation}\label{3.18}
\|u_\varepsilon\|_{L^\infty(Q_T)}\le C_2\|y_0\|_{L^2(0,1)},
\end{equation}
where $C_2$ is a constant depending on $s, \lambda, \delta$ and $\omega$.

Combining (\ref{3.7}) and (\ref{3.12}) and recalling $\tilde{y}\in K$ now yields
\begin{equation}\label{3.19}
\begin{split}
&\|(y_\varepsilon)_x\|_{L^\infty(Q_T)}^2+\|\sqrt{t}(y_\varepsilon)_t\|_{L^\infty(Q_T)}^2+\|(y_\varepsilon)_{xt}\|_{L^2(Q_T)}^2
\\[2mm]
&\quad\leq C(1+\delta^6)\left(\int_0^1\big((b y_x)_x^2(x,0)+(y_0)_x^2+y_0^2\big)dx\right)
\\[2mm]
&\quad=C(1+\delta^6)\left(\int_0^1\big(A(y_0)\big)_{xx}^2+(y_0)_x^2+y_0^2\big)dx\right),
\end{split}
\end{equation}
where $A(y_0)=\int_0^{y_0}a(\tau)d\tau$. By making
\begin{equation}\label{3.20}
\int_0^1\big[(A(y_0))_{xx}^2+(y_0)_x^2+y_0^2\big]dx<\eta_0^2
\end{equation}
for a sufficiently small $\eta_0>0$, we obtain that $y_\varepsilon\in K$.

Choose a sequence of $(y_\varepsilon, u_\varepsilon)$, $\varepsilon\rightarrow 0^+$, that achieves the optimum in (\ref{3.8}). From the estimates (\ref{3.12}), (\ref{3.18}), it follows that (on a subsequence)
\begin{align}
\begin{aligned}
&u_\varepsilon \to u \quad &&\text{weakly in}\ L^2(Q_T),
\\[1mm]
&u_\varepsilon \to u \quad &&\text{weakly star in}\ L^{\infty}(Q_T),
\\[1mm]
&(u_\varepsilon)_t \to v \quad &&\text{weakly in}\ L^2(Q_T),
\\[1mm]
&y_\varepsilon \to y \quad &&\text{strongly in}\ L^2(Q_T).
\end{aligned}
\nonumber
\end{align}
Obviously, $v=u_t$. By (\ref{3.19}), other immediate consequences are that (on a subsequence)
\begin{align}
\begin{aligned}
&(y_\varepsilon)_t \to y_t \quad &&\text{strongly in}\ H^{-1}(Q_T),
\\[1mm]
&(b(y_\varepsilon)_x)_x \to (by_x)_x \quad &&\text{strongly in}\ L^2((0,T); H^{-2}(0,1)),
\end{aligned}
\nonumber
\end{align}
as $\varepsilon\to 0^+$. Thus $(y,u)$ satisfies the linearized equation (\ref{3.1}). Applying the weak lower semicontinuity of the norm to (\ref{3.19}),
we obtain the following estimates provided that (\ref{3.20}) holds for some sufficiently small $\eta_0$:
$$
\|y_x\|_{L^\infty(Q_T)}\leq\delta,\ \|\sqrt{t}y_t\|_{L^\infty(Q_T)}\leq\delta, \ \|y_{xt}\|_{L^2(Q_T)}\le \delta,
$$
i.e., $y\in K$. Now, letting $\varepsilon\to 0$ in (\ref{3.11}), we see that $y(\cdot, T)=0$ in $(0,1)$. Finally, (\ref{3.12}) directly yields
$\iint_{Q_T}(u^2+u_t^2)dxdt\leq C_1\|y_0\|_{L^2}^2$.

\vskip2mm

We are now ready to apply Kakutani's theorem. Consider $\Phi: K\to 2^K$,
\begin{equation}\label{3.21}
\begin{split}
\Phi(\tilde{y})=\biggl\{&y; \ y\in K,\ y(\cdot, T)=0,\ \text{and}\ \exists u\in L^\infty(Q_T)\cap H^1([0,T]; L^2(0,1)), \text{with}\ \|u\|_{L^\infty(Q_T)}
\\
&\le C_2\|y_0\|_{L^2(0,1)} \ \text{and}\ \iint_{Q_T}(u^2+u_t^2)dxdt\leq C_1\|y_0\|_{L^2(0,1)}^2,\ \text{such that}\ (y,u)\ \text{satisfies}\ (\ref{3.1})
\biggr\}.
\end{split}
\end{equation}

Clearly, $\Phi$ is well defined,takes nonempty values for every $\tilde{y}\in K$, and has convex values. In order to prove that $\Phi(\tilde{y})$ is closed, consider
a sequence $\{y_n\}_{n=1}^\infty\subset \Phi(\tilde{y})$ such that
\begin{equation}\label{3.22}
y_n\rightarrow y \qquad \text{strongly in}\ L^2(Q_T)
\end{equation}
as $n\to \infty$, and choose the corresponding $u_n$ as in the definition of $\Phi$ in (\ref{3.21}). Namely, $(y_n, u_n)$ satisfy
\begin{equation}\label{3.23}
\left\{\begin{array}{ll}
(y_n)_{t}-(a(\tilde{y})(y_n)_x)_{x}=1_\omega u_n, &(x,t)\in Q_T,
\\[2mm]
y_n(0,t)=y_n(1,t)=0, &t\in(0,T),
\\[2mm]
y_n(x,0)=y_0(x), &x\in(0,1).
\end{array}\right.
\end{equation}

Since $\iint_{Q_T}(u_n^2+(u_n)_t^2)dxdt\leq C_1\|y_0\|_{L^2}^2$ and $\|u_n\|_{L^\infty(Q_T)}\le C_2\|y_0\|_{L^2(0,1)}$, we may select a convergent subsequence $\{u_{n_j}\}_{j=1}^\infty$
and a limit function $u^*$ such that
\begin{align}
\begin{aligned}
&u_{n_j} \to u^* \quad &&\text{weakly in}\ L^2(Q_T),
\\[1mm]
&u_{n_j} \to u^* \quad &&\text{weakly star in}\ L^{\infty}(Q_T),
\\[1mm]
&(u_{n_j})_t \to u^*_t \quad &&\text{weakly in}\ L^2(Q_T)
\end{aligned}
\nonumber
\end{align}
as $j\to\infty$. Clearly, $\iint_{Q_T}({u^*}^2+{u^*_t}^2)dxdt\leq C_1\|y_0\|^2_{L^2(0,1)}$ and $\|u^*\|_{L^\infty(Q_T)}\le C_2\|y_0\|_{L^2(0,1)}$.

\vskip1mm

Taking $u_n=u_{n_j}$ in (\ref{3.23}) and applying estimate (\ref{3.19}) to the corresponding solutions $y_{n_j}$, we know that there exists
a subsequence of $\{y_{n_j}\}_{j=1}^\infty$ (for notational simplicity, we still denote this subsequence by $\{y_{n_j}\}_{j=1}^\infty$), such that $(y_{n_j})_t\rightarrow z$
strongly in $H^{-1}(Q_T)$ and $(b(y_{n_j})_x)_x\rightarrow w$ strongly in $L^2((0,T); H^{-2}(0,1))$. For any $\varphi\in C_0^\infty(\overline{Q}_T)$, we have
$$
\iint_{Q_T}(b(y_{n_j})_x)_x\varphi dxdt=-\iint_{Q_T}b(y_{n_j})_x\varphi_x dxdt=\iint_{Q_T}y_{n_j}(b\varphi_x)_x dxdt.
$$
Letting $j\to +\infty$, we find
$$
\iint_{Q_T}w\varphi dxdt=\iint_{Q_T}y(b\varphi_x)_x dxdt=-\iint_{Q_T}y_x b\varphi_xdxdt=\iint_{Q_T}(b y_x)_x \varphi dxdt
$$
for all $\varphi\in C_0^\infty(\overline{Q}_T)$, whence $(by_x)_x=w$. A similar argument shows that $y_t=z$. Applying estimate (\ref{3.19}) again shows $\|y_x\|_{L^\infty(Q_T)}\leq\delta$,
$\|\sqrt{t}y_t\|_{L^\infty(Q_T)}\leq\delta$ and $\|y_{xt}\|_{L^2(Q_T)}\le \delta$ provided that (\ref{3.20}) holds for some sufficiently small $\eta_0$, i.e. $y\in K$. By passing to the weak limit in (\ref{3.23}),
satisfied by $(y_{n_j}, u_{n_j})$, we obtain that $(y,u^*)$ also satisfies the equation for the same $\tilde{y}$. Therefore, $y\in \Phi(\tilde{y})$, and thus $\Phi(\tilde{y})$ is closed.
Since $\Phi(\tilde{y})\subset K$, it follows that $\Phi(\tilde{y})$ is compact for every $\tilde{y}\in K$. Since $y_n(\cdot, 0)=y(\cdot, 0)=y_0(\cdot)$ and $y_n(\cdot,T)=0$, and furthermore,
\begin{equation}\label{3.24}
\begin{split}
\|y_n(\cdot, T)-y(\cdot, T)\|_{L^2(0,1)}^2&=\int_0^1\hskip-1.5mm\int_0^T\frac{d}{dt}(y_n(x,t)-y(x,t))^2dxdt
\\[2mm]
&\le2\iint_{Q_T}|y_n(x,t)-y(x,t)(y_n(x,t)-y(x,t))_t|dxdt
\\[2mm]
&\le2\|y_n-y\|^{1/2}_{H^1([0,T]; L^2(0,1))}\|y_n-y\|^{1/2}_{L^2(Q_T)}
\\[2mm]
&\le 2\delta^{1/2}\|y_n-y\|^{1/2}_{L^2(Q_T)}.
\end{split}
\end{equation}
Passing $n\to \infty$ and invoking (\ref{3.22}), yields $y(\cdot, T)=0$ a.e. in $(0,1)$.

In such a case, the lower semicontinuity of $\Phi$ can be obtained from the fact it has a closed graph. Indeed, if $\tilde{y}_n\in K$, $\tilde{y}_n\rightarrow \tilde{y}$, and
$y_n\in \Phi(\tilde{y}_n)\rightarrow y$ strongly in $L^2(Q_T)$ as $n\to \infty$, consider the corresponding control function $u_n$, as in (\ref{3.23}) with $\tilde{y}$ being replaced by $\tilde{y}_n$,
and we obtain (on a subsequence) that
\allowdisplaybreaks
\begin{align}
\begin{aligned}
&u_n \to u^* \quad &&\text{weakly in}\ L^2(Q_T),
\\[1mm]
&u_n \to u^* \quad &&\text{weakly star in}\ L^{\infty}(Q_T),
\\[1mm]
&(u_n)_t \to u^*_t \quad &&\text{weakly in}\ L^2(Q_T),
\\[1mm]
&(y_n)_t \to y_t \quad &&\text{strongly in}\ H^{-1}(Q_T),
\\[1mm]
&(y_n)_x \to y_x \quad &&\text{strongly in}\ L^2((0,T); H^{-1}(0,1)),
\\[1mm]
&\tilde{y}_n \to y \quad &&\text{strongly in}\ L^2(Q_T),
\end{aligned}
\label{3.25}
\end{align}
as $n\to\infty$. By (\ref{1.2}), $|a(\tilde{y}_n)-a(\tilde{y})|\leq M|\tilde{y}_n-\tilde{y}|$, it follows that
$$
a(\tilde{y}_n)\to a(\tilde{y}) \ \ \quad \text{as} \ n \ \to \infty
$$
strongly in $L^2(Q_T)$, and weakly-star in $L^\infty(Q_T)$. Then
$$
a(\tilde{y}_n)(y_n)_x\to a(\tilde{y})y_x\quad \text{weakly in} \ L^2((0,T); H^{-1}(0,1)),
$$
or, equivalently,
$$
(a'(\tilde{y}_n)(y_n)_x)_x\to(a'(\tilde{y})y_x)_x\quad \text{weakly in}\ L^2((0,T); H^{-2}(0,1))
$$
as $n\to\infty$. By going to the weak limit in (\ref{3.23}), we obtain that the pair $(y, u^*)$ also satisfies the linearized system, with $b=a(\tilde{y})$. The other conditions ($\tilde{y}, y\in K$, $y(\cdot, T)=0$)
are obviously satisfied (see above the details of the proof), so $(\tilde{y}, y)$ belongs to the graph of $\Phi$.

Now we can apply Kakutani's theorem and obtain that there is $y\in K$ such that $y\in\Phi(y)$. Such a $y$ is a solution of the diffusion equation (\ref{1.5}) with $y(\cdot, 0)=y_0$
and $y(\cdot,T)=0$. In addition, its controller $u$ satisfies the required estimate.

Recall (\ref{3.20}). we have now proved the null controllability of system (\ref{1.5}) under the condition that $\int_0^1\big[(A(y_0))_{xx}^2+(y_0)_x^2+y_0^2\big]dx<\eta_0^2$ for a sufficiently small $\eta_0$. By Proposition 2.3,
$$
\int_0^1\big[(A(y(x,t_0)))_{xx}^2+y^2_x(x,t_0)+y^2(x,t_0)\big]dx\le C\int_0^1 (y_0^2+(y_0)_x^2)dx.
$$
Therefore, the following estimate holds:
$$
\int_0^1\big[(A(y(x,t_0)))_{xx}^2+y^2_x(x,t_0)+y^2(x,t_0)\big]dx<\eta_0^2,
$$
provided that the initial state $y_0\in H^1_0(0,1)$ with $\|y_0\|_{H^1(0,1)}\leq\eta$ for some (sufficiently small) $\eta>0$.

Define
\begin{equation}\label{3.26}
u(x,t)=
\left\{\begin{array}{ll}
0, & \mbox{in}\ \ (0,1)\times(0,t_0),
\\[2mm]
u^*, & \mbox{in}\ \ (0,1)\times(t_0,T),
\end{array}\right.
\end{equation}
where $u^*$ is a controller for $y$ on $(t_0, T)$.

Thus, we have proved the main result of the present section.
\begin{theorem}\label{Theorem 3.1}
Suppose that the diffusion coefficient $a(s)$ satisfies conditions (\ref{1.2}) and (\ref{1.3}). For any $T>0$, there exists $\eta>0$ such that,
for every $y_0\in H^1_0(0,1)$ with $\|y_0\|_{H^1(0,1)}\leq\eta$, there exist a control $u\in H^1([0,T]; L^2(0,1))\cap L^\infty(Q_T)$, such that the associated state $y$ of (\ref{1.5}) satisfies $y(x,T)=0$ in $(0,1)$.
Moreover,
\begin{equation}
\iint_{Q_T}(u^2+u_t^2)dxdt\leq C\|y_0\|^2_{L^2(0,1)},
\end{equation}
and
\begin{equation}
\|u\|_{L^\infty(Q_T)}\le C\|y_0\|_{L^2(0,1)}.
\end{equation}
\end{theorem}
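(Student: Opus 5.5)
The plan is a two-stage argument: let the uncontrolled flow regularize and shrink the data, then apply the fixed point construction already carried out in this section. Fix $T>0$ and pick an intermediate time $t_0\in(0,T)$, for instance $t_0=T/2$.

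\emph{Stage 1: uncontrolled evolution on $(0,t_0)$.} On $(0,t_0)$ I set $u=0$, so $y$ solves the homogeneous problem (\ref{2.1}). By Proposition 2.3, for $\|y_0\|_{H^1(0,1)}\le\eta$ with $\eta$ small, the state $y_1:=y(\cdot,t_0)$ satisfies
$$\int_0^1\big[(A(y_1))_{xx}^2+(y_1)_x^2+y_1^2\big]dx\le C(t_0)\|y_0\|_{H^1(0,1)}^2.$$
Shrinking $\eta$ further makes the right-hand side smaller than $\eta_0^2$, so $y_1$ satisfies (\ref{3.20}).

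\emph{Stage 2: controlled evolution on $(t_0,T)$.} I restart the argument of this section on $(t_0,T)$ with initial datum $y_1$, after translating time so the interval begins at $0$. The steps are as follows.
\begin{enumerate}
\item Form the penalized problem (\ref{3.8}). Its optimal control is $u_\varepsilon=1_\omega e^{2s\beta}\phi^3p_\varepsilon$.
\item Corollary 3.1 gives (\ref{3.11}).
\item Proposition 3.1 gives the $H^1$-in-time bound (\ref{3.12}).
\item The weighted function $v_\varepsilon$ solves (\ref{3.13}). Combining $W^{2,1}_2$-regularity with the embedding (\ref{2.2}) gives the $L^\infty$ bound (\ref{3.18}).
\item Estimate (\ref{3.7}) together with (\ref{3.20}) shows that $y_\varepsilon\in K$.
\item Let $\varepsilon\to0$ to obtain a null control for the linearized system (\ref{3.1}) at every $\tilde y\in K$.
\item Apply Kakutani's theorem to the set-valued map $\Phi$ in (\ref{3.21}) to get a fixed point $y=\tilde y$. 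This fixed point solves (\ref{1.5}) on $(t_0,T)$ with $y(\cdot,T)=0$, and its control $u^*$ satisfies $\|u^*\|_{L^\infty}\le C_2\|y_1\|_{L^2}$ and $\iint(u^{*2}+u^{*2}_t)\le C_1\|y_1\|_{L^2}^2$.
\end{enumerate}

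\emph{Concatenation and estimates.} Define $u$ by (\ref{3.26}). By uniqueness for (\ref{2.1}), the glued state is the solution of (\ref{1.5}) on all of $Q_T$. Applying Lemma 2.1/energy estimate (\ref{2.10}) to Stage 1 gives $\|y_1\|_{L^2}\le\|y_0\|_{L^2}$, which turns both control bounds into bounds in terms of $\|y_0\|_{L^2(0,1)}$.

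\emph{Main obstacle: regularity across the gluing time $t_0$.} The claim $u\in H^1([0,T];L^2(0,1))$ needs the glued control to have no jump at $t_0$, so I need $u^*(\cdot,t_0^+)=0$. This follows from the Carleman weights: $e^{2s\beta}\phi^3\to0$ as $t\to t_0^+$, and $v_\varepsilon$ vanishes at both time endpoints, so $u^*$ extends continuously by zero. The $H^1$ bound on $(t_0,T)$ then transfers to $(0,T)$.

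A secondary point is to check that the Kakutani step is legitimate, namely that $K$ is convex and compact and that $\Phi$ has nonempty, closed, convex values and a closed graph. These properties have already been verified above for the data $y_1$, since that verification used only (\ref{3.20}).
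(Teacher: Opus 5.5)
Your two-stage plan is the paper's proof. The paper carries out the Carleman, penalization, $L^\infty$-iteration and Kakutani construction of Section 3 under the smallness condition (\ref{3.20}). It then uses Proposition 2.3 to reach (\ref{3.20}) from $H^1$-small data after an uncontrolled phase on $(0,t_0)$, and glues the control as in (\ref{3.26}). Your bound $\|y(\cdot,t_0)\|_{L^2}\le\|y_0\|_{L^2}$ from (\ref{2.10}) is a detail the paper leaves implicit. You are also right that the paper never checks that (\ref{3.26}) has no jump at $t_0$, which is needed for $u\in H^1([0,T];L^2(0,1))$.

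Your resolution of that point, however, does not reach the control you actually use. The Carleman structure $u_\varepsilon=1_\omega e^{2s\beta}\phi^3p_\varepsilon$ gives decay at $t_0^+$ only for the penalized controls and their weak-star limits. The control attached to the Kakutani fixed point $y\in\Phi(y)$ is merely some $u$ obeying the two bounds written in (\ref{3.21}). Moreover, $\Phi(\tilde y)$ contains pairs not produced by (\ref{3.8}): it admits any control meeting those bounds and is closed under convex combinations and limits. So nothing forces $u^*(\cdot,t_0^+)=0$.

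The repair is to build the decay into $\Phi$. In the shifted time variable, write $m(t)=\sup_x e^{2s\beta}e^{-(s+\delta_0)\beta_0}\phi^3\phi_0^{-3}$. Then $m(t)\to0$ as $t\to0^+$: since $\beta\le\frac{e^{\lambda\|\psi\|}}{e^{\lambda\|\psi\|}+1}\beta_0<0$, the exponent is at most a positive multiple of $\beta_0\to-\infty$, given $\delta_0<s/2$ and $\lambda\|\psi\|_{C([0,1])}\ge2$. By (\ref{3.17}), $|u_\varepsilon(x,t)|\le C\,m(t)\|y_1\|_{L^2}$ uniformly in $\varepsilon$. Add the constraint $|u(x,t)|\le C\,m(t)\|y_1\|_{L^2}$ to the definition (\ref{3.21}). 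This constraint is convex and stable under weak-star limits, so $\Phi$ stays nonempty with convex, closed values and a closed graph. Then $u^*\in C([t_0,T];L^2(0,1))$ has zero trace at $t_0$, and the concatenation (\ref{3.26}) lies in $H^1([0,T];L^2(0,1))$ with the stated bounds.
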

where $C$ is a constant depending on $\rho, \kappa, M$ (the constants involved in conditions (\ref{1.2}) and (\ref{1.3})), and $\omega$.

\section{Proofs of Theorems 1.1-1.2}
Based on the decay estimates and the maximum modulus estimates of solutions to quasilinear parabolic equations, together with the local null controllability of
quasilinear parabolic equations under additive controls, we establish the proofs of Theorems 1.1 and 1.2 in this section.
\vskip1mm

{\bf Proof of Theorem 1.1.}\ Take $t_1=32M^2C_0^4/\rho^3$ ($M, \rho$ are constants appearing in (\ref{1.2}) and (\ref{1.3}), $C_0$ is a constant appearing in (\ref{2.13})). In light of Proposition 2.1 then,
we see that the solution of (\ref{2.1}) satisfies
\begin{equation}\label{4.1}
\|y(\cdot,t)\|_{L^\infty(0,1)}\le \frac{\rho}{4MC_0^2} \qquad \forall t\ge t_1.
\end{equation}

Let $y(\cdot, t_1)$ be the new initial datum and consider the following system
\begin{equation}\label{4.2}
\left\{\begin{array}{ll}
\bar{y}_{t}-(a(\bar{y}){\bar y}_x)_{x}=0, &\quad (x,t)\in Q_T,
\\[2mm]
\bar{y}(0,t)=\bar{y}(1,t)=0, &\quad t\in(0,T),
\\[2mm]
\bar{y}(x,0)=y(x,t_1), &\quad x\in(0,1).
\end{array}\right.
\end{equation}
By virtue of (\ref{4.1}) and Proposition 2.2, there exists a sufficiently large $t_2>0$ such that
\begin{equation}\label{4.3}
\|\bar{y}(\cdot, t_2)\|_{H^1(0,1)}<\eta.
\end{equation}
Therefore, if we set $u\equiv 0$ in (\ref{1.1}), we obtain a corresponding solution $y$. This solution $y$ is also a solution to (\ref{1.5}) under the condition $u\equiv 0$,
and it simultaneously satisfies (\ref{2.1}). Consequently, it holds that
\begin{equation}\label{4.4}
\|y(\cdot, t_1+t_2)\|_{H^1(0,1)}<\eta.
\end{equation}

For any $t_3>0$, let us deal with the following additive control system
\begin{equation}\label{4.5}
\left\{\begin{array}{ll}
Y_{t}-(a(Y)Y_x)_{x}=1_\omega v, &\quad \text{in} \ \ Q_{t_3},
\\[2mm]
Y(0,t)=Y(1,t)=0, &\quad \text{in} \ \ (0,t_3),
\\[2mm]
Y(\cdot,0)=y(\cdot, t_1+t_2), &\quad \text{in} \ \ (0,1).
\end{array}\right.
\end{equation}
Here $y$ is the solution of (\ref{1.1}) with $u\equiv 0$, in other words, $y$ is the solution of (\ref{2.1}).

Applying Theorem 3.1, there exists a control function $v\in L^{\infty}(Q_{t_3})$ such that the corresponding solution to (\ref{4.5}) satisfies
$Y(x, t_3)=0$ in $(0,1)$. Moreover, $v$ can be chosen such that the following estimate holds:
\begin{align}
\label{4.6} \|v\|_{L^\infty(Q_{t_3})}\le C\|y(\cdot, t_1+t_2)\|_{L^2(0,1)},
\end{align}
where $C$ is a constant independent of $t_1, t_2$.

Based on the maximum modulus estimate valid for weak solutions to quasilinear parabolic equations (\ref{2.27}) (see Corollary 2.2), it follows from Proposition 2.1 and (\ref{4.6}) that
\begin{equation}\label{4.7}
\begin{split}
\sup_{Q_T}|Y|&\leq\sup_{[0,1]}|y(\cdot, t_1+t_2)|+\frac{6}{\rho}\|v\|_{L^\infty(Q_{t_3})}
\\[2mm]
&\leq\sup_{[0,1]}|y(\cdot, t_1+t_2)|+\frac{6}{\rho}\sup_{[0,1]}|y(\cdot, t_1+t_2)|
\\[2mm]
&\leq C\|y_0\|_{L^2(0,1)}(t_1+t_2)^{-1/2}.
\end{split}
\end{equation}
Note that $g\in C(\mathbb R)$ and $g(0)$ $=0$. By (\ref{4.7}),
we can take $t_2$ large enough such that
\begin{equation}
\label{4.8} \|g(Y)\|_{L^{\infty}(Q_{t_3})}\le \theta_0/2.
\end{equation}
Denote $\tilde{u}=\frac{v}{g(Y)-\theta}$ in $Q_{t_3}$. Note that $y(\cdot, t_1+t_2)\in H_0^1(0,1)$ and $\|y(\cdot, t_1+t_2)\|_{H^1(0,1)}<\eta$
can be made arbitrarily small by choosing $t_2$ sufficiently large. Therefore, we see that the following problem
\begin{align}
\label{4.9} \left\{\begin{array}{ll} z_{t}-(a(z)z_x)_x=1_\omega u(g(z)-\theta)
&\mbox{in}\ \ Q_{t_3},
\\[2mm]
z(0,t)=z(1,t)=0 &\mbox{in}\ \ (0,t_3),
\\[2mm]
z(\cdot, 0)=y(\cdot, t_1+t_2) &\mbox{in} \ \ (0,1)
\end{array}\right.
\end{align}
admits uniquely a solution $z\in W^{2,1}_2(Q_{t_3})$, and $\hat u=\tilde{u}$ can act as an
input control for this system. Clearly, the solution $Y$ of (\ref{4.5}) is also a solution of
(\ref{4.9}) with $\hat u=\tilde{u}$. Using the uniqueness of
solutions to (\ref{4.5}), one can see easily that
the solution $z$ to (\ref{4.9}) with the control $\hat u$ and the solution $Y$ to (\ref{4.5})
with the control $v$ become identical in $Q_{t_3}$.
Recall that $Y(\cdot,t_3)=0$ in $(0,1)$. We have $z(\cdot, t_3)=0$ in $(0,1)$.

Given $y_0\in H^1_0(0,1)$. From the above argument, we can
select $T>0$ large enough such that the corresponding solution $y$
to (\ref{1.1}) with the control
\begin{align}
\label{4.10} u=\left\{\begin{array}{ll} 0,
&\ \ \mbox{in}\ \ (0,1)\times(0, t_1+t_2)
\\[2mm]
\tilde{u}, &\ \ \mbox{in}\ \ (0,1)\times(t_1+t_2, T)
\end{array}\right.
\end{align}
satisfies $y(\cdot, T)=0$ in $(0,1)$. The proof is complete.\quad$\Box$
\vskip2mm

\begin{remark}
By examining the proof of Theorem 1.1, it is not hard to see that if $t_2$ is taken sufficiently large, then $\|u\|_{L^\infty(Q_T)}$ can be made arbitrarily small.
Therefore, for any $\sigma>0$, there exist a time $T>0$ and a control function $u$ with $\|u\|_{L^\infty(Q_T)}<\sigma$, such that the corresponding state of (\ref{1.1})
satisfies $y(\cdot, T)=0$ a.e. in $(0,1)$.
\end{remark}

{\bf Proof of Theorem 1.2.}\ From the proof of Theorem 1.1, it is immediately clear that if we take the control function $u$ in the system (\ref{1.5})
as given in (\ref{4.10}), then the solution $y$ of (\ref{1.5}) satisfies $y(x,T)=0$ in $(0,1)$. The proof is complete. \quad$\Box$

\section{Time Optimal Control}
Based on the null controllability of the system (\ref{1.1}), we now prove the existence of the time optimal control.

Since no growth restriction is imposed on the nonlinearity $g(s)$, in general, (\ref{1.1}) has no globally defined
(in time) solution, and the solution may blow up in finite
time. Therefore, we first study the existence time of the local solution.

\begin{lemma}
Let $\theta\in L^\infty(Q_\infty)$ and $y_0\in
H^1_0(0,1)$. If $g$ satisfies the assumptions in Theorem 1.1,
then there exists a constant $\varepsilon_0>0$ depending on $g$,
$\theta$ and $y_0$, such that if
$\|u\|_{L^\infty(Q_\infty)}\le \varepsilon_0$, then the problem (\ref{1.1}) has a unique solution $y$ in the space
$\stackrel{\circ}{W}{\hskip-1mm}^{1,1}_2(Q_{T_0})\cap L^\infty(Q_{T_0})$, where $T_0$ is a constant and $T_0\ge 1$.
\end{lemma}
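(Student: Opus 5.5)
The plan is to truncate the nonlinearity, solve the truncated problem globally, and then use the maximum modulus estimate of Corollary 2.2 to show that the truncation is inactive on a time interval of length $T_0=1$, provided the control is small.

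Set $L_0=\|y_0\|_{L^\infty(0,1)}+1$, which is finite since $H^1_0(0,1)\subset C([0,1])$. Define the truncation
\[
g_L(s)=g(\max\{-L_0,\min\{s,L_0\}\}).
\]
Then $g_L$ is bounded and globally Lipschitz, because $g$ is Lipschitz on $[-L_0,L_0]$. Choose $T_0=1$ and
\[
\varepsilon_0=\frac{\rho}{6\bigl(\max_{|s|\le L_0}|g(s)|+\|\theta\|_{L^\infty}\bigr)+1}.
\]

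For $\|u\|_{L^\infty}\le\varepsilon_0$, we first solve
\[
y_t-(a(y)y_x)_x=1_\omega u\,(g_L(y)-\theta)
\]
on $Q_{T_0}$ with the same boundary and initial data. The right-hand side is bounded independently of $y$, so existence follows from a Schauder (or Leray–Schauder) fixed point. Given $w$ in a ball of $L^2(Q_{T_0})$, put $f=1_\omega u(g_L(w)-\theta)\in L^\infty$. By the $L^2$ theory quoted before Proposition 2.4, problem (2.27) has a unique solution $y=S(w)$. Using $W^{2,1}_2$-type bounds, or at least $V_2$ bounds plus a bound on $y_t$ in $L^2(0,T;H^{-1})$, the map $S$ is compact in $L^2(Q_{T_0})$. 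It is continuous because $g_L$ is continuous and $u$, $\theta$ are bounded. It maps a large ball into itself, since $\|f\|_\infty$ is bounded uniformly in $w$. Hence $S$ has a fixed point.

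Next, Corollary 2.2 applied to this fixed point gives
\[
\sup_{Q_{T_0}}|y|\le \|y_0\|_\infty+\frac{6}{\rho}\,\varepsilon_0\bigl(\max_{|s|\le L_0}|g|+\|\theta\|_\infty\bigr)<L_0 .
\]
Therefore $g_L(y)=g(y)$, and $y$ solves (1.1) on $Q_{T_0}$. The resulting $y$ lies in $L^\infty\cap\stackrel{\circ}{W}{\hskip-1mm}^{1,1}_2$; for the $y_t\in L^2$ part I would use the $W^{2,1}_2$ regularity available for $H^1_0$ data with a bounded right-hand side.

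For uniqueness, take two bounded solutions $y_1,y_2$ and subtract the equations; on the common $L^\infty$ range, $g$ is Lipschitz. The difficulty lies in the quasilinear term
\[
(a(y_1)y_{1x}-a(y_2)y_{2x})_x .
\]
The direct approach of testing with $y_1-y_2$ leaves a term $\int (a(y_1)-a(y_2))y_{2x}(y_1-y_2)_x$. Controlling it needs $y_{2x}\in L^\infty$ or $L^4$ in space, which the $W^{2,1}_2$ regularity gives in 1D only integrated in time. This is the main obstacle. I would first try the Kirchhoff transform $A(y)=\int_0^y a$. Testing the difference of equations with $A(y_1)-A(y_2)$ in a duality ($H^{-1}$) formulation gives the estimate
\[
\frac{d}{dt}\|y_1-y_2\|_{H^{-1}}^2+c\,\|y_1-y_2\|_{L^2}^2\le C\|y_1-y_2\|_{H^{-1}}\|y_1-y_2\|_{L^2},
\]
since $(A(y_1)-A(y_2))(y_1-y_2)\ge\rho|y_1-y_2|^2$. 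Absorbing the right-hand side and applying Gronwall then yields $y_1=y_2$. Alternatively, I would use the regularity already available together with
\[
\int |y_{2x}|^2|y_1-y_2|^2\le\|y_{2x}\|_{L^\infty_x}^2\|y_1-y_2\|^2,
\]
where $\|y_{2x}\|_{L^\infty_x}^2\in L^1(0,T)$ by 1D interpolation $\|y_x\|_\infty^2\le C\|y_x\|\|y_{xx}\|$, and conclude with Gronwall.
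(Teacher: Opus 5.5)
Your proposal is correct, but it reaches the lemma by a different route from the paper.

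\textbf{The paper's route.} The paper mollifies $g$, $u$, $\theta$ and $y_0$. It bounds the approximate classical solutions by $|y_n|\le A$ on $[0,T_0]$ through comparison with the ODE solutions $g^\pm$ of $g'=\pm(K_1G(g)+K_2)$. This uses a $C^1$ majorant $G\ge|g|$, with $K_1,K_2$ chosen small so that $T_0\ge1$. It then derives $V_2$ bounds and an $L^2$ bound on $(y_n)_t$ via the Kirchhoff multiplier $(A(y_n))_t$, and passes to the limit by compactness. Uniqueness is proved with the test function $\phi=\int_{T_0}^t(A(y_1)-A(y_2))\,d\tau$, and the estimate closes only thanks to the smallness condition $\varepsilon_0L_AT_0\kappa\le\rho/2$.

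\textbf{Your route.} You truncate $g$, solve the truncated problem by a fixed point, and remove the truncation with the paper's own Corollary 2.2. Your $H^{-1}$ uniqueness estimate is a close relative of the paper's integrated duality argument, but it closes by Gronwall rather than by smallness.

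\textbf{What your route buys.} It gives an explicit $\varepsilon_0$ and avoids both the ODE comparison and the majorant $G$. Because the bound in Corollary 2.2 does not depend on $T$, the same $\varepsilon_0$ works on every $Q_{T}$, so small controls in fact give global solutions, which is more than the lemma asserts. Your uniqueness needs no smallness and covers any two bounded solutions, using the Lipschitz constant of $g$ on their common range. The paper's argument, by contrast, tacitly uses that both solutions lie in $[-A,A]$.

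\textbf{What the paper's route buys.} It actually proves $y_t\in L^2(Q_{T_0})$, which membership in $\stackrel{\circ}{W}{\hskip-1mm}^{1,1}_2(Q_{T_0})$ requires; you only cite this as $W^{2,1}_2$ regularity. You can close the point exactly as the paper does. For the truncated problem the right-hand side $f$ is bounded, so testing with $(A(y))_t$ gives $\kappa^{-1}\iint_{Q_{T_0}}(A(y))_t^2\le\iint_{Q_{T_0}}f\,(A(y))_t+\frac{\kappa^2}{2}\int_0^1(y_0)_x^2dx$. Hence $|y_t|\le\rho^{-1}|(A(y))_t|$ is square integrable.

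Continuity of your fixed-point map should also be stated explicitly: it follows from Aubin--Lions compactness plus uniqueness for the frozen-$f$ quasilinear problem. This is standard.
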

{\bf Proof.}\quad Let $j_n$ be a standard mollifying sequence in
$\mathbb R$, namely, $j_n(s)=\frac{1}{n}j(\frac{s}{n})$ with
$j(s)\ge 0$, $j(s)\in C_0^\infty({\mathbb{R}})$,
supp$j(s)\subset[-1, 1]$ and $\int_{-\infty}^{+\infty} j(s)ds=1$.

For fixed $u\in L^\infty(Q_\infty)$ and $\theta\in
L^\infty(Q_\infty)$, consider the following problem
\begin{align}\label{5.1}
\left\{\begin{array}{ll} y_{t}-(a(y)y_x)_x=u_n g_n(y)-u_n\theta_n, &\mbox{in}\ \ Q_T,
\\[2mm]
y(0,t)=y(1,t)=0, &\mbox{in}\ \ (0,T),
\\[2mm]
y{(x,0)}=y_{0n}(x), &\mbox{in}\ \ (0,1).
\end{array}\right.
\end{align}
We define $g_n$, $u_n$, $\theta_n$ and $y_{0n}$ as follows. Put
\begin{align}
g_n(s)&=j_n\ast\max(-n, \min(g(\tau), n))
\nonumber
\\
&\triangleq\int_{\mathbb R}j_n(s-\tau)\max(-n, \min(g(\tau),
n))d\tau. \nonumber
\end{align}
It is easily verified that $g_n\in C^\infty(\mathbb{R})$ and
$|g_n|\le n$. Similarly, we can define $u_n$, $\theta_n$ and $y_{0n}$
by using the above convolution operator $\ast$. For example, define
\begin{align}
u_n(x,t)=
\iint_{\mathbb R^{2}}&\hskip-1mmj_n(x-y)j_n(t-\tau) \max(-n, \min(1_\omega u(y,\tau),n))dyd\tau.
\nonumber
\end{align}
Then $u_n\in C^\infty(Q_\infty)$ with $u_n\equiv0$ in $\omega_n\times\mathbb{R}^+$,
where $\omega_n=\{x\in\Omega\backslash\omega;\ \mbox{dist}(x, \omega)
<\frac{1}{n}\}$, and for any $p\in [1, +\infty)$,
\begin{equation}
\label{5.2}
\|u_n-1_\omega u\|_{L^p(Q_\infty)}\rightarrow 0 \quad\ \ \mbox{as}\ \ \ n\rightarrow +\infty.
\end{equation}
Likewise, $\theta_n\in C^\infty(Q_\infty)$, $y_{0n}\in C^\infty_0[0,1]$, and satisfy
\begin{equation}
\label{5.3}
\|\theta_n-\theta\|_{L^\infty(Q_\infty)}\rightarrow 0, \
\ \|y_{0n}-y_0\|_{H^1(0,1)}\rightarrow 0
\end{equation}
as $n\rightarrow +\infty$.

With these choices of $g_n$, $u_n$, $\theta_n$ and $y_{0n}$, we
see that the approximate problem (\ref{5.1}) admits a local
classical solution on $Q_{T_n}=(0,1)\times(0, T_n)$ for all $n$ (see
\cite{LSU}, Chap. V), which we denote by $y_n$. Here $T_n$ is the
maximal existence time of the corresponding local solution.

Now we claim that there exists ${T_0}>1$ such that
\begin{equation}
\|y_n\|_{L^\infty(Q_{T_0})}\le A \ \ \mbox{for all}\ n,
\label{5.4}
\end{equation}
where $A$ is a constant depending upon
$\|u\|_{L^\infty(Q_\infty)}$,
$\|\theta\|_{L^\infty(Q_\infty)}$,
$\|y_0\|_{L^\infty(0,1)}$, and $g$.

Recall that $g(\cdot)\in C(\mathbb{R})$, and $g(\cdot)$ is Lipschitz continuous on $[-L, L]$ for any $L>0$. Using the above approximate method, we can take $G\in C^1(\mathbb R)$
such that $|g(s)|\le G(s)$. Let $g^+$ and $g^-$ be the solution of the ordinary differential equations
\begin{equation}
g'=K_1G(g)+K_2, \quad g(0)= \|y_0\|_{L^\infty(0,1)} \nonumber
\end{equation}
and
\begin{equation}
g'=-K_1G(g)-K_2, \quad g(0)=-\|y_0\|_{L^\infty(0,1)} \nonumber
\end{equation}
respectively, where $K_1\ge 0$ and $K_2\ge 0$ are constants. Clearly, $g^+$ is increasing in $t$, and $g^-$ is decreasing in $t$.
Furthermore, the smaller the values of $K_1\ge 0$ and $K_2\ge 0$, the smaller the value of $g^{+}(t)$ and the smaller the value of $-g^{-}(t)$.

By the standard theory of ordinary differential equations (see
\cite{CL}, Theorem 2.2, pp. 10) there exists ${T_0}>0$ such that $g^+(t)$ and
$g^-(t)$ exist on $[0, {T_0}]$, where ${T_0}$ depends on
$\|y_0\|_{L^\infty(0,1)}$, $K_1$, $K_2$ and the function
$G(\cdot)$. Moreover, we can select $K_1$ and $K_2$ small enough
such that ${T_0}\ge 1$.

For the above selected $K_1$ and $K_2$, we take
$\|u\|_{L^\infty(Q_\infty)}<\varepsilon_0$ such that
$\|u_n\|_{L^\infty(Q_\infty)}\le K_1$ and $
\|u_n\theta_n\|_{L^\infty(Q_\infty)}\le K_2$. By standard
comparison theorems,
\begin{equation}
\nonumber|y_n(x,t)|\le \max\{g^+(t), -g^-(t)\}\quad \mbox{in}\
(0,1)\times(0, {T_0}).
\end{equation}
Setting $A=\max\{g^+({T_0}), -g^-({T_0})\}$, we see that (\ref{5.4}) holds.

Multiplying (\ref{5.1}) by $y_n$ (where $Q_T$ is replaced by
$Q_{{T_0}}$) and integrating the resulting relation over $Q_{t}$, $0<t\le {T_0}$,
we deduce from Cauchy's inequality with $\varepsilon$ that
\begin{equation}
\begin{split}
&\frac{1}{2}\int_0^1y_n^2(x,t)dx+\int_0^t\hskip-1mm\int_0^1 a(y_n)(y_n)_x^2dxds
\\[2mm]
=&\int_0^t\hskip-1mm\int_0^1 u_n g_n(y_n)y_ndxds-\int_0^t\hskip-1mm\int_0^1 u_n \theta_n y_n dxds+\frac{1}{2}\int_0^1y_{0n}^2dx
\\[2mm]
\nonumber
\le&\left(\int_0^t\hskip-1mm\int_0^1 |u_n g_n(y_n)|^2dxds\right)^{1/2}\left(\int_0^t\hskip-1mm\int_0^1 y_n^2dxds\right)^{1/2}
\\[2mm]
&\ \ +\left(\int_0^t\hskip-1mm\int_0^1 u_n^2 \theta_n^2dxds\right)^{1/2}\left(\int_0^t\hskip-1mm\int_0^1y_n^2dxds\right)^{1/2}+\frac{1}{2}\int_0^1y_{0n}^2dx
\\[2mm]
\le &\varepsilon\int_0^t\hskip-1mm\int_0^1y_n^2dxds+C(\varepsilon)\Big(K_1\sup_{[-A, A]}|g_n|+K_2\Big)+\frac{1}{2}\int_0^1y_{0n}^2dx
\end{split}
\end{equation}
for any $\varepsilon>0$. Take $\varepsilon=\pi^2\rho/2$. By (\ref{1.3}) and Poincar$\acute{\mbox{e}}$'s inequality, we obtain
\begin{equation}\nonumber
\int_0^1y_n^2(x,t)dx+\rho\int_0^t\hskip-1mm\int_0^1 (y_n)_x^2dxds
\le C(\rho)\Big(K_1\sup_{[-A, A]}|g_n|+K_2\Big)+\int_0^1y_{0n}^2dx.
\end{equation}
The construction of $g_n$ and $y_{0n}$ directly implies that
\begin{equation}\label{5.5}
\sup_{0\le t\le{T_0}}\int_0^1y_n^2(x,t)dx+\iint_{Q_{T_0}}(y_n)_x^2dxds \le A_0,
\end{equation}
where $A_0$ is a constant independent of $n$. Since
$$
\iint_{Q_{T_0}}(A(y_n))_x^2dxdt=\iint_{Q_{T_0}}a^2(y_n)(y_n)_x^2dxdt\le \kappa^2\iint_{Q_{T_0}}(y_n)_x^2dxdt,
$$
where $\kappa$ follows from the condition (\ref{1.3}), we see that
\begin{equation}\label{5.6}
\iint_{Q_{T_0}}(A(y_n))_x^2dxds \le \kappa^2A_0.
\end{equation}

By virtue of (\ref{5.4}), (\ref{5.5}) and (\ref{5.6}), there exist a subsequence
$n_j\rightarrow+\infty$ as $j\to +\infty$, a function $y\in C(0, T_0; L^2(0,1))\cap L^2(0, T_0; H^1_0(0,1))$ and a function $v\in L^2(Q_{T_0})$, such
that
\begin{align}
\begin{aligned}
&y_{n_j}\rightarrow y &&\quad\mbox{weakly star in}\
L^\infty(Q_{T_0}),
\\[1mm]
&(y_{n_j})_x\rightarrow y_x&&\quad\mbox{weakly in}\
L^2(Q_{T_0}),
\\[1mm]
&(A(y_{n_j}))_x\to v&&\quad\mbox{weakly
in}\ L^2(Q_{T_0}).
\end{aligned}
\label{5.7}\end{align}

However, owing to the nonlinear effects of the functions $A$ and $g$, the convergence established in (\ref{5.7}) is insufficient to conclude that $v=(A(y))_x$.
Moreover, it does not guarantee whether, and in what sense, $g_{n_j}(y_{n_j})$ converges to $g(y)$. To this end, we need to establish an estimate for the weak derivative of $y_n$ with respect to time $t$.

Define the Kirchhoff transformation as
$$
A:\ \ y\to z=\int_0^y a(\tau)d\tau.
$$
Under conditions (\ref{1.2}) and (\ref{1.3}), we see that $A$ is a $C^3(\mathbb R)$ function and the transformation $z=A(y)$ is invertible. Denote the inverse transformation as
$$
y=B(z):=A^{-1}(z).
$$
Thus, (\ref{5.1}) can be rewritten as
\begin{align}\label{5.8}
\left\{\begin{array}{ll} (B(z_n))_{t}-(z_n)_{xx}=u_n g_n(B(z_n))-u_n\theta_n, &\mbox{in}\ \ Q_{T_0},
\\[2mm]
z_n(0,t)=z_n(1,t)=0, &\mbox{in}\ \ (0, {T_0}),
\\[2mm]
z_n{(x,0)}=A(y_{0n}(x)), &\mbox{in}\ \ (0,1).
\end{array}\right.
\end{align}
Multiplying (\ref{5.8}) by $(z_n)_t$ and integrating the resulting relation over $Q_{T_0}$,
we deduce
\begin{equation}\label{5.9}
\begin{split}
&\iint_{Q_{T_0}}B'(z_n)(z_n)_t^2dxds+\frac{1}{2}\int_0^1 (z_n)_x^2(x,T_0)dx
\\[2mm]
=&\iint_{Q_{T_0}}u_n g_n(B(z_n))(z_n)_tdxdt-\iint_{Q_{T_0}}u_n\theta_n (z_n)_t dxdt+\frac{1}{2}\int_0^1(z_n)_x^2(x,0)dx
\end{split}
\end{equation}
Recalling now (\ref{1.2}) and (\ref{1.3}), we estimate
\begin{equation}
\label{5.10}
B'(z_n)=(A^{-1}(z_n))'=\frac{1}{A'(y_n)}=\frac{1}{a(A^{-1}(z_n))}\ge \frac{1}{\kappa}.
\end{equation}
Substituting (\ref{5.10}) into (\ref{5.9}) and using Cauchy's inequality with $\varepsilon$ yields
\allowdisplaybreaks
\begin{equation}\label{5.11}
\begin{split}
&\frac{1}{\kappa}\iint_{Q_{T_0}}(z_n)_t^2dxdt+\frac{1}{2}\int_0^1 (z_n)_x^2(x,T_0)dx
\\[2mm]
\le&\iint_{Q_{T_0}}u_n g_n(B(z_n))(z_n)_tdxdt-\iint_{Q_{T_0}}u_n\theta_n (z_n)_t dxdt+\frac{1}{2}\int_0^1(z_n)_x^2(x,0)dx
\\[2mm]
\le&\left(\iint_{Q_{T_0}}|u_n g_n(B(z_n))|^2dxdt\right)^{1/2}\left(\iint_{Q_{T_0}}(z_n)_t^2dxdt\right)^{1/2}
\\[2mm]
&\ \ +\left(\iint_{Q_{T_0}}|u_n\theta_n|^2dxdt\right)^{1/2}\left(\iint_{Q_{T_0}}(z_n)_t^2dxdt\right)^{1/2}+\frac{1}{2}\int_0^1a^2(y_{0n})(y_{0n})_x^2(x,0)dx
\\
\le&\varepsilon \iint_{Q_{T_0}}(z_n)_t^2dxdt+C(\varepsilon)\Big(K_1\sup_{[-A, A]}|g_n|+K_2\Big)+\frac{\kappa^2}{2}\int_0^1(y_{0n})_x^2(x,0)dx.
\end{split}
\end{equation}
Take $\varepsilon=1/(2\kappa)$ in (\ref{5.11}) to deduce
\begin{equation}
\nonumber
\iint_{Q_{T_0}}(z_n)_t^2dxdt\le A_1,
\end{equation}
where $A_1$ is a constant independent of $n$. Since
$$
\big|(y_n)_t\big|=\big|(B(z_n))_t\big|=\frac{1}{a(A^{-1}(z_n))}|(z_n)_t|\le \frac{1}{\rho}\big|(z_n)_t\big|,
$$
we have
\begin{equation}
\label{5.12}
\iint_{Q_{T_0}}(y_n)_t^2dxdt\le A_1/\rho^2.
\end{equation}
From (\ref{5.5}) and (\ref{5.12}), it follows that there exists a subsequence of $\{y_n\}_{n=1}^\infty$, still denoted by $\{y_{n_j}\}_{j=1}^{\infty}$ for convenience, such that
\allowdisplaybreaks
\begin{align}
\begin{aligned}
&y_{n_j}\rightarrow y &&\quad\mbox{strongly in}\ L^2(Q_{T_0}),
\\[2mm]
&(y_{n_j})_t\rightarrow y_t &&\quad\mbox{weakly in}\ L^2(Q_{T_0}),
\end{aligned}
\label{5.13}
\end{align}
as $j\to +\infty$. Furthermore, by combining (\ref{5.7}), it is readily seen that $y\in \stackrel{\circ}{W}{\hskip-1mm}^{1,1}_2(Q_{T_0})\cap L^\infty(Q_{T_0})$. Note that
$$
|A(y_{n_j})-A(y)|=|a(\delta y_{n_j}+(1-\delta) y)||y_{n_j}-y|\le \kappa |y_{n_j}-y| \ \ \ \mbox{for some}\ \delta\in (0,1).
$$
Clearly, \begin{equation}
\label{5.14}
A(y_{n_j})\rightarrow A(y) \quad\mbox{strongly in}\ L^2(Q_{T_0}),\qquad \mbox{as}\ j\to +\infty.
\end{equation}
Using the definition of the weak derivative together with the third limit in (\ref{5.7}) and (\ref{5.14}), we conclude that $v=(A(y))_x$.

Since the sequence $\{y_n\}_{n=1}^\infty$ is uniformly bounded, with $\|y_n\|_{L^\infty(Q_{T_0})}\le A$, the definitions of $g_n(y_n)$ and $g(y)$ are effectively restricted to the interval $[-A, A]$. On $[-A, A]$,
$g_n$ converges uniformly to $g$ and $g\in C([-A, A])$. Consequently, there exists a constant $K>0$ such that $|g_n(y_n)|\le |K$, $g(y)|\le K$, and $|g_n(y_n)-g(y)|\le 2K$. On the other hand, the strong convergence $y_n\to y$
in $L^2(Q_{T_0})$ implies that $y_n$ converges to $y$ in measure on $Q_{T_0}$, and hence $g_n(y_n)$ converges to $g(y)$ in measure on $Q_{T_0}$. Then, by the Lebesgue dominated convergence theorem, we obtain
\begin{equation}\label{5.15}
g_{n_j}(y_{n_j})\rightarrow g(y)\quad\mbox{strongly in}\ L^2(Q_{T_0}).
\end{equation}

For any $\phi\in W^{1, 1}_2(Q_{T_0})$, the weak form of the equation satisfied by $y_{n_j}$ in $Q_{T_0}$ is
\begin{align}
&\iint_{Q_{t}}[y_{n_j}\phi_s-(A(y_{n_j}))_x\phi_x
+u_{n_j}g_{n_j}(y_{n_j})\phi-u_{n_j}\theta_{n_j}\phi] dxds \nonumber \\
=&\int_0^1 y_{n_j}(x,t)\phi(x,t)dx-\int_0^1y_{n_j}(x,0)\phi(x,0)dx, \quad\forall\ 0<t\le T_0. \nonumber
\end{align}
Passing to the limit through the subsequence $n_j$ we obtain
\begin{align}
&\iint_{Q_{t}}[y\phi_s-a(y)y_x \phi_x +1_\omega
ug(y)\phi-u\theta\phi]
dxds=\int_0^1 y(x,t)\phi(x,t)dx-\int_0^1y_{0}\phi(x,0)dx.\nonumber
\end{align}
Therefore, $y\in W^{1,1}_2(Q_{T_0})\cap L^\infty(Q_{T_0})$ is a solution of the system (\ref{1.1}).

Finally, we prove the uniqueness of the solution to (\ref{1.1}) in $W^{1,1}_2(Q_{T_0})\cap L^\infty(Q_{T_0})$.

Suppose that $y_1$ and $y_2$ are two solutions of (\ref{1.1}). Then $y_1-y_2\in \stackrel{\bullet}{W}{\hskip-1mm}^{1,1}_2(Q_T)\cap L^\infty(Q_{T_0})$, and satisfies the weak form
\allowdisplaybreaks
\begin{equation}\label{5.16}
\begin{split}
&\int_0^1 (y_1(x,{T_0})-y_2(x,{T_0}))\phi(x,{T_0})dx-\iint_{Q_{{T_0}}}(y_1-y_2)\phi_tdxdt
\\
&\quad +\iint_{Q_{{T_0}}}(A(y_1)-A(y_2))_x\phi_xdxdt=\iint_{Q_{{T_0}}}1_\omega u(g(y_1)-g(y_2))\phi dxdt
\end{split}
\end{equation}
for any $\phi\in W^{1,1}_2(Q_{{T_0}})$. Set
\begin{equation}
\phi(x,t)=\int_{{T_0}}^t(A(y_1(x,\tau))-A(y_2(x,\tau)))d\tau.
\label{5.17}
\end{equation}
It is straightforward to check $\phi(\cdot,{T_0})=0$ in $(0,1)$, and $\phi\in \stackrel{\circ}{W}{\hskip-1mm}^{1,1}_2(Q_{T_0})$. Then from (\ref{5.16}) we have
{
\allowdisplaybreaks
\begin{align}
&\iint_{Q_{T_0}}(y_1-y_2)(A(y_1)-A(y_2))dxdt \nonumber
\\[2mm]
&\quad-\iint_{Q_{T_0}}\left[(A(y_1)-A(y_2))_x\int_{T_0}^t(A(y_1(x,\tau))-A(y_2(x,\tau)))_xd\tau\right] dxdt \nonumber
\\[2mm]
=&\iint_{Q_{T_0}}(y_1-y_2)(A(y_1)-A(y_2))dxdt \nonumber
\\[2mm]
&\quad-\frac{1}{2}\iint_{Q_{T_0}}\frac{d}{dt}\left(\int_{T_0}^t(A(y_1(x,\tau))-A(y_2(x,\tau)))_xd\tau\right)^2dxdt  \nonumber
\\[2mm]
\le&\left(\iint_{Q_{T_0}}u^2 |g(y_1)-g(y_2)|^2dxdt\right)^{1/2}  \nonumber
\\[2mm]
&\quad\cdot \left(\iint_{Q_{T_0}}\left(\int_{T_0}^t(A(y_1(x,\tau))-A(y_2(x,\tau)))d\tau\right)^2dxdt\right)^{1/2}. \label{5.18}
\end{align}
}

From the preceding proof, we know that when a smaller value of $\|u\|_{L^\infty(Q_{{T_0}})}$ is chosen, the positive constants
$K_1$ and $K_2$ can also be made smaller. Consequently, $A>0$ becomes smaller as well. For the fixed function $g$ that is Lipschitz continuous on any finite interval,
the corresponding Lipschitz constant on the interval $[-A,A]$ will also become smaller as $A$ decreases. Denote by $L_A$ the Lipschitz constant of the function $g$
on the interval $[-A, A]$. Then for any $s_1,s_2\in [-A, A]$, we have $|g(s_1)-g(s_2)|\le L_A|s_1-s_2|$. By H$\ddot{\mbox{o}}$lder's inequality,
\allowdisplaybreaks
\begin{align}
&\iint_{Q_{{T_0}}}\left(\int_{T_0}^t(A(y_1(x,\tau))-A(y_2(x,\tau)))d\tau\right)^2dxdt
\nonumber
\\[2mm]
\le &\iint_{Q_{{T_0}}}\left[\left(\int^{{T_0}}_t(A(y_1(x,\tau))-A(y_2(x,\tau)))^2d\tau\right)^{1/2}\left(\int^{T_0}_t 1 d\tau\right)^{1/2}\right]^2dxdt
\nonumber
\\[2mm]
\le &{T_0}\iint_{Q_{{T_0}}}\left(\int^{T_0}_0(A(y_1(x,\tau))-A(y_2(x,\tau)))^2d\tau\right)dxdt
\nonumber
\\[2mm]
=&{{T_0}}^2\iint_{Q_{T_0}}(A(y_1(x,t))-A(y_2(x,t)))^2dxdt.\nonumber
\end{align}
Returning to the estimate (\ref{5.18}), we obtain
\allowdisplaybreaks
\begin{equation}\label{5.19}
\begin{split}
&\iint_{Q_{{T_0}}}(y_1-y_2)(A(y_1)-A(y_2))dxdt
\\[2mm]
\le&\varepsilon_0L_A {T_0}\left(\iint_{Q_{{T_0}}}(y_1-y_2)^2dxdt\right)^{1/2}
\left(\iint_{Q_{{T_0}}}(A(y_1)-A(y_2))^2dxdt\right)^{1/2}
\\[2mm]
&-\frac{1}{2}\int_0^1\left(\int_{{T_0}}^0(A(y_1(x,\tau))-A(y_2(x,\tau)))_xd\tau\right)^2dx.
\end{split}
\end{equation}
From $A(y_1)-A(y_2)=a(\delta y_1+(1-\delta)y_2)(y_1-y_2)$ for some $\delta\in (0,1)$, and using the condition (\ref{1.3}), we derive
\begin{equation}\label{5.20}
\begin{split}
\rho\iint_{Q_{{T_0}}}(y_1-y_2)^2dxdt\le &\varepsilon_0L_A {T_0}\kappa \iint_{Q_{{T_0}}}(y_1-y_2)^2dxdt
\\[2mm]
&-\frac{1}{2}\int_0^1\left(\int_{{T_0}}^0(A(y_1(x,\tau))-A(y_2(x,\tau)))_xd\tau\right)^2dx.
\end{split}
\end{equation}
By choosing $\varepsilon_0$ sufficiently small, we can ensure that $\varepsilon_0L_A {T_0}\kappa\le \rho/2$. Thus,
\begin{equation}\label{5.21}
\iint_{Q_{{T_0}}}(y_1-y_2)^2dxdt\le-\frac{1}{\rho}\int_0^1\left(\int_{{T_0}}^0(A(y_1(x,\tau))-A(y_2(x,\tau)))_xd\tau\right)^2dx\le 0,
\end{equation}
and hence $y_1=y_2$ a.e. on $Q_{T_0}$. The proof is complete. \hfill $\Box$
\vskip2mm

{\bf Proof of Theorem 1.3.}\ Given $y_0\in H^1_0(0,1)$ and $\sigma>0$. By Remark 4.1, there exist at least a control $u\in U_\sigma$ and $T=T(\sigma, \theta, y_0)>0$,
such that the solution $y$ to (\ref{1.1}) satisfies $y(\cdot, T(\sigma, \theta, y_0))=0$ a.e. in $(0,1)$. Let us estimate the minimal control time.

Define
\begin{align}
T^*(\sigma)
=\displaystyle\inf\{T(\sigma,\theta, y_0); \ y(\cdot, T(\sigma,\theta,y_0))
=0, \ u\in U_\sigma\}.
\nonumber
\end{align}
By Theorem 1.1, we have $0\le T^*(\sigma)\le T(\sigma,\theta, y_0)<\infty$. Hence $T^*(\sigma)$ is a finite number. To approach this minimal time,
we construct sequences $\{T_n\}_{n=1}^\infty$ and $\{u_n\}_{n=1}^\infty\subset U_\sigma$ such that $T_n\ge T^*(\sigma)$ for all $n\in{\mathbb N}^+$, $T_n\rightarrow T^*(\sigma)$
as $n\rightarrow\infty$, and for each control $u_n$, the corresponding solution $y_n$ of (1.1) satisfies $y_n(\cdot, T_n)=0$ a.e. in $(0,1)$ for all $n\in{\mathbb N}^+$.

Denote
\begin{align}
\tilde{u}_n(x,t)
=\left\{\begin{array}{ll}
u_n(x,t) &\ x\in (0,1),\ 0\le t\le T_n,
\\[1mm]
0 &\ x\in (0,1),\ t>T_n.
\end{array}
\right.
\label{5.22}
\end{align}
Let $\tilde{y}_n$ be the solution of (\ref{1.1})with $u=\tilde{u}_n$.
Then $\tilde{y}_n(\cdot, T_n)=0$ a.e. in $(0,1)$. Given $T>T^*(\sigma)$,
we can take $n_0\in{\mathbb N}^+$ such that $T\ge T_n$ for
all $n\ge n_0$. Since $\|\tilde{u}_n\|_{L^\infty(Q_T)}\le \sigma$, there exist a
subsequence of $\{\tilde{u}_n\}_{n=1}^{\infty}$, denoted also by itself, and a
function $\tilde{u}^*\in L^\infty(Q_T)$ such that
\begin{equation}
\tilde{u}_n\rightarrow\tilde{u}^*\quad \mbox{weakly star in}\ L^\infty(Q_T)
\label{5.23}
\end{equation}
with $\|\tilde{u}^*\|_{L^\infty(Q_T)}\le\sigma$.

Similar to the estimates for $u_n$ in Lemma 5.1, we can prove that $\{\tilde{y}_n\}_{n=1}^\infty$ is bounded in
$\stackrel{\circ}{W}{\hskip-1.5mm}^{1,1}_2(Q_T)\cap L^\infty(Q_T)$. Thus,
we can draw out a subsequence of
$\{\tilde{y}_n\}_{n=1}^\infty$, still denoted in the same way, and a limit
function $\tilde{y}^*$, such that
\allowdisplaybreaks
\begin{align}
\begin{aligned}
&\tilde{y}_n\rightarrow \tilde{y}^* &&\quad\mbox{weakly star in}\ L^\infty(Q_T),
\\[1mm]
&\tilde{y}_n\rightarrow \tilde{y}^* &&\quad\mbox{weakly in}\ \stackrel{\circ}{W}{\hskip-1.5mm}^{1,0}_2(Q_T),
\\[1mm]
&\tilde{y}_n\rightarrow \tilde{y}^* &&\quad\mbox{strongly in}\ C([0,T]; L^2(0,1)),
\\[1mm]
&(A(\tilde{y}_n))_x\to A(\tilde{y}^*)_x &&\quad\mbox{weakly in}\ L^2(Q_{T}),
\\[1mm]
&g(\tilde{y}_{n})\rightarrow g(\tilde{y}^*) &&\quad\mbox{strongly in}\ L^2(Q_{T}).
\end{aligned}
\label{5.24}
\end{align}
Returning to the weak formulation satisfied by $\tilde{y}_n$, we see that for any $\phi\in W^{1, 1}_2(Q_T)$,
\begin{align}
&\iint_{Q_{t}}[\tilde{y}_{n}\phi_s-(A(\tilde{y}_{n}))_x\phi_x
+1_\omega\tilde{u}_n g(\tilde{y}_{n})\phi-1_\omega \tilde{u}_n\theta\phi] dxds \nonumber \\
=&\int_0^1 \tilde{y}_{n}(x,t)\phi(x,t)dx-\int_0^1 y_0(x)\phi(x,0)dx, \quad\forall\ 0<t\le T\nonumber.
\end{align}
Letting $n\to+\infty$, we obtain
\begin{align}
&\iint_{Q_{t}}[\tilde{y}^*\phi_s-(A(\tilde{y}^*))_x \phi_x +1_\omega u(g(\tilde{y}^*)-\theta)\phi]
dxds=\int_0^1 \tilde{y}^*(x,t)\phi(x,t)dx-\int_0^1y_{0}\phi(x,0)dx.\nonumber
\end{align}
Therefore, $\tilde{y}^*\in W^{1,1}_2(Q_{T})\cap L^\infty(Q_{T})$ is a solution of the following problem:
\begin{align}
\left\{\begin{array}{ll} \tilde{y}^*_{t}-(a(\tilde{y}^*)\tilde{y}^*_x)_x=1_\omega\tilde{u}^*(g(\tilde{y}^*)-\theta) &(x,t)\in (0,1)\times(0,T),
\\[1mm]
\tilde{y}^*(1,t)=\tilde{y}^*(0,t)=0 &t\in (0,T),
\\[1mm]
\tilde{y}^*(x,0)=y_0(x) &x\in (0,1).\end{array}\right.
\nonumber
\end{align}
Moreover, by the third limit in (\ref{5.24}),
\begin{align}
&\|\tilde{y}^*(\cdot, T^*(\sigma))-\tilde{y}_n(\cdot,T_n)\|_{L^2(0,1)}
\nonumber
\\[1mm]
\le&\|\tilde{y}^*(\cdot, T^*(\sigma))-\tilde{y}_n(\cdot,T^*(\sigma))\|_{L^2(0,1)}
+\|\tilde{y}_n(\cdot, T^*(\sigma))-\tilde{y}_n(\cdot,T_n)\|_{L^2(0,1)}
\rightarrow 0
\nonumber
\end{align}
as $n\rightarrow+\infty$, which implies $\tilde{y}^*(\cdot,T^*(\sigma))=0$.
Finally, denote
\begin{align}
u^*(x,t)
=\left\{\begin{array}{ll}
\tilde{u}^*(x,t) &\ x\in (0,1),\ 0\le t\le T^*(\sigma),
\\[2mm]
0 &\ x\in (0,1),\ t>T^*(\sigma),
\end{array}
\right.
\label{5.25}
\end{align}
and let $y^*$ be the solution to (\ref{1.1}) with $u=u^*$. Then
$y^*(\cdot, T^*(\sigma))=0$
a.e. in $(0,1)$, and hence $T^*(\sigma)$ is the optimal time of
the problem (P). The proof is complete.\hfill $\Box$

\end{document}